\documentclass[table]{amsart} 
\usepackage{float}
\usepackage[margin=1.0in]{geometry}
\usepackage{amscd,amsmath,amsxtra,amsthm,amssymb,stmaryrd,xr,mathrsfs,mathtools,enumerate,commath, comment, enumitem, graphicx}
\usepackage{amsfonts, amssymb, amsthm, amsmath, braket, hyperref, mathtools, comment, mathrsfs, enumitem, cleveref}
\usepackage{stmaryrd}
\usepackage{orcidlink}
\usepackage{multirow}
\usepackage{xcolor}
\usepackage{listings}
\usepackage{commath}
\usepackage{comment}
\usepackage{multicol}
\usepackage{tikz-cd}
\usepackage{tkz-graph}
\usepackage{colonequals}
\usepackage{hyperref}
\usepackage{graphicx}
\usepackage{bigstrut}
\usepackage{amsmath, amssymb}
\usepackage{tikz}
\usepackage{empheq}
\usepackage{enumitem}
\usepackage{longtable, multirow, array}
\usepackage{placeins}

\newcommand{\Mod}[1]{\ (\mathrm{mod}\ #1)} %For mod in paranthesis

\newcommand{\sfrak}{\mathfrak{s}}

\newcommand{\scrE}{\mathscr{E}}

\usepackage{longtable} % for 'longtable' environment
\usepackage{pdflscape} % for 'landscape' environment
\usepackage{booktabs}
\usepackage{hyperref}
\definecolor{vegasgold}{rgb}{0.77, 0.7, 0.35}
\definecolor{darkgoldenrod}{rgb}{0.72, 0.53, 0.04}
\definecolor{gold(metallic)}{rgb}{0.83, 0.69, 0.22}
\hypersetup{
 colorlinks=true,
 linkcolor=darkgoldenrod,
 filecolor=brown,      
 urlcolor=gold(metallic),
 citecolor=darkgoldenrod,
 }

\usepackage[all,cmtip]{xy}
\DeclareFontFamily{U}{wncy}{}
\DeclareFontShape{U}{wncy}{m}{n}{<->wncyr10}{}
\DeclareSymbolFont{mcy}{U}{wncy}{m}{n}
\DeclareMathSymbol{\Sh}{\mathord}{mcy}{"58}
\usepackage[T2A,T1]{fontenc}
\usepackage[OT2,T1]{fontenc}

\usepackage{tikz}
\usetikzlibrary{shapes.geometric}
\usetikzlibrary{decorations.markings}
\tikzset{every loop/.style={min distance=10mm,looseness=10}}
\tikzstyle{vertex}=[auto=left,circle,minimum size=1pt,inner sep=0pt]

\newtheorem{theorem}{Theorem}[section]
\newtheorem{lemma}[theorem]{Lemma}

\newtheorem*{theorem*}{Theorem}
\newtheorem*{ass*}{Assumption}
\newtheorem{definition}[theorem]{Definition}
\newtheorem{corollary}[theorem]{Corollary}
\newtheorem{remark}[theorem]{Remark}
\newtheorem{example}[theorem]{Example}

\newtheorem{proposition}[theorem]{Proposition}

\renewcommand{\a}{\alpha}
\newcommand{\ao}{\alpha_0}
\renewcommand{\b}{\beta}
\newcommand{\bo}{\beta_0}
\newcommand{\rad}{\operatorname{rad}}

\newcommand{\Z}{\mathbb{Z}}

\newcommand{\Q}{\mathbb{Q}}
\newcommand{\F}{\mathbb{F}}
\newcommand{\D}{\mathfrak{D}}

\newcommand{\calC}{\mathcal{C}}
\newcommand{\calE}{\mathcal{E}}

\DeclareMathOperator{\Res}{Res}

\numberwithin{equation}{section}

\definecolor{codegreen}{rgb}{0,0.6,0}
\definecolor{codegray}{rgb}{0.5,0.5,0.5}
\definecolor{codepurple}{rgb}{0.58,0,0.82}
\definecolor{backcolour}{rgb}{0.95,0.95,0.92}

\lstdefinestyle{sageStyle}{
    backgroundcolor=\color{backcolour},   
    commentstyle=\color{codegreen},
    keywordstyle=\color{magenta},
    numberstyle=\tiny\color{codegray},
    stringstyle=\color{codepurple},
    basicstyle=\ttfamily\footnotesize,
    breakatwhitespace=false,         
    breaklines=true,                 
    captionpos=b,                    
    keepspaces=true,                 
    numbers=left,                    
    numbersep=5pt,                  
    showspaces=false,                
    showstringspaces=false,
    showtabs=false,                  
    tabsize=4,
    frame=single, % Adds a frame around the code
    language=Python % SageMath is based on Python
}
\begin{document}
\title[monogeneity and Galois group] {Arithmetic Aspects of Number Fields Generated by Polynomial Families}
\author[Rupam Barman]{Rupam Barman\, \orcidlink{0000-0002-4480-1788}}
\address[Barman]{Department of Mathematics, Indian Institute of Technology Guwahati, Assam, India,
PIN- 781039}
\email{ rupam@iitg.ac.in}
\author[A. Jakhar]{Anuj Jakhar\, \orcidlink{0009-0007-5951-2261}}
\address[Jakhar, Kalwaniya]{Department of Mathematics, Indian Institute of Technology Madras, Chennai-600036, Tamil Nadu, India}
\email{anujjakhar@iitm.ac.in}
\email{ravikalwaniya3@gmail.com}

\author[R.~Kalwaniya]{Ravi Kalwaniya\, \orcidlink{0009-0008-6964-5276}}

\author[P.~Yadav]{Prabhakar Yadav\, \orcidlink{0009-0000-9622-3775}}
\address[Yadav]{Department of Computer Science, Ashoka University, Sonipat-131021, Haryana, India}
\email{pkyadav914@gmail.com; yadavprabhakar096@gmail.com}

\keywords{Norm, trace, discriminant, monogeneity, nonmonogeneity, galois group, newton polygon}
\subjclass[2020]{11R04; 11R29, 11Y40.}
\begin{abstract}
Let $f(x)=(x^{k}+c)^{m}-ax^{n}\in\mathbb{Z}[x]$ be an irreducible polynomial over $\mathbb{Q}$, where $k,m,n\in\mathbb{N}$ with $km>n$, and let $K=\mathbb{Q}(\theta)$, where $\theta$ is a root of $f(x)$. We investigate the arithmetic properties of the number fields that arise from this family. We first obtain an explicit formula for the discriminant of $f(x)$. Using this formula, we establish necessary and sufficient conditions for the monogeneity of $f(x)$, expressed in terms of the prime divisors of $a$ and $c$ and the parameters $k,m,n$. This yields infinite families of monogenic polynomials of arbitrary degree, including families with a non-square-free discriminant. Building on these results, we extend our algebraic characterization to composite polynomials, establishing some explicit conditions for the monogeneity of the composition of $f(x)$ with an arbitrary polynomial $g(x)$.   From an analytic point of view, we derive asymptotic estimates for the number of monogenic polynomials in these families under natural assumptions. We further study non-monogeneity via the field index $i(K)$ and, for each prime $p$, provide sufficient conditions ensuring $\nu_p(i(K))=1$, yielding partial progress toward a problem of Narkiewicz. We also highlight a connection with a class of differential equations naturally associated with $f(x)$. As an application, we determine the conditions under which the splitting field of $f(x)$ has a full symmetric Galois group. Several explicit examples illustrate our results.
\end{abstract}
{\iffalse
\begin{abstract}

Let $f(x)=(x^{k}+c)^{m}-ax^{n}\in\mathbb{Z}[x]$ be an irreducible polynomial over the field $\Q$ of rationals,
where $k,m,n\in\mathbb{N}$ with $km>n$, and let $K=\mathbb{Q}(\theta)$ with $\theta$ a root of $f(x)$.
We study the arithmetic of the number fields arising from this family with emphasis on monogeneity, index theory, and Galois structure.
We first compute the discriminant of $f(x)$ explicitly and derive the necessary and sufficient conditions, depending only on the prime divisors of $a,c$ and the exponents $k,m,n$, for $\mathbb{Z}[\theta]$ to coincide with the full ring of integers $\mathbb{Z}_K$.
This yields infinite families of monogenic polynomials of arbitrary degree, including families with a non-square-free discriminant. Building on these results, we extend our algebraic characterization to composite polynomials, establishing explicit conditions for the monogeneity of the composition of $f(x)$ with an arbitrary polynomial $g(x)$. We further determine some conditions under which the associated splitting fields have a Galois group equal to the full symmetric group.
From a quantitative perspective, we obtain an asymptotic bound for the number of monogenic polynomials in these families under certain conditions.
Finally, we investigate non-monogeneity via the field index $i(K)$ and provide, for each rational prime $p$, sufficient conditions ensuring $\nu_p(i(K))=1$, giving partial progress toward a problem of Narkiewicz.
Several explicit examples illustrate our results.
\end{abstract}
\fi}
\maketitle

\tableofcontents

\section{Introduction } \label{intro}

A number field $K$ is said to be \emph{monogenic} if its ring of integers $\mathbb{Z}_K$ is generated as a $\mathbb{Z}$-algebra by a single element, that is, 
\[
\mathbb{Z}_K=\mathbb{Z}[\alpha]\quad \text{for some } \alpha\in\mathbb{Z}_K.
\]
Equivalently, $K=\mathbb{Q}(\alpha)$ admits a power integral basis $\{1,\alpha,\dots,\alpha^{n-1}\}$, where $n=[K:\mathbb{Q}]$.
monogeneity greatly simplifies arithmetic computations in number fields such as the determination of discriminants, integral bases, and ideal arithmetic. Deciding whether a number field is monogenic is a longstanding and challenging problem in algebraic number theory.

Let $f(x)\in\mathbb{Z}[x]$ be a monic irreducible polynomial of degree $n$, let $\theta$ be a root of $f(x)$, and set $K=\mathbb{Q}(\theta)$.
Denote by $\D_f$ and $d_K$ the discriminant of the polynomial $f(x)$ and the number field $K$, respectively.
It is well known that
\begin{equation}\label{eq:disc-relation}
\D_f=[\mathbb{Z}_K:\mathbb{Z}[\theta]]^2\, d_K.
\end{equation}
If $\mathbb{Z}[\theta]=\mathbb{Z}_K$, then $f(x)$ is called \emph{monogenic}, and in this case $\theta$ generates a power integral basis of $K$.
While the monogeneity of $f(x)$ implies the monogeneity of $K$, the converse does not hold in general, as classical examples already demonstrate.

The problem of determining whether a number field is monogenic is classical and dates back to the foundational work of Dedekind and Hasse.
In 1878, Dedekind introduced what is now known as \emph{Dedekind’s  criterion}, which gives a necessary and sufficient condition for a prime $p$ not to divide the index $[\mathbb{Z}_K:\mathbb{Z}[\theta]]$ in terms of the factorization of $f(x)$ modulo $p$ (cf.\ \cite[Theorem~6.1.4]{HC}, \cite{RD1878}).
This criterion has generated substantial interest, and several equivalent formulations and generalizations have since been developed (see, for example, \cite{MEC, JNT, Jh-Kh}). It also led to further work on the arithmetic of indices and on the structure of rings of integers.

From a qualitative perspective, significant progress has been made in classifying monogenic number fields in special families.
Ga\'al developed the theory of index form equations and obtained classifications for number fields of small degree (cf.\ \cite{Gaal}).
More recently, Jakhar and collaborators derived necessary and sufficient conditions for the monogeneity of various families of trinomials and related polynomials by refining Dedekind’s criterion and incorporating local considerations (see \cite{JNT, jKN, J-K-K, J-K-Y}).
These works also produced infinite families of monogenic number fields of arbitrarily large degree.
Further developments and related results can be found in \cite{Jones, JonesHarr, BNW2025,BNW2026, S-L-S, Smith2021, GR2017, KKR}; we refer the reader to the survey article of Ga\'al \cite{Gaal} for a comprehensive overview.

Alongside the study of individual polynomial families, the monogeneity of composite polynomials has recently emerged as a particularly active direction of research. Understanding when a composition $f(g(x))$ remains monogenic is of intrinsic interest: it reveals subtle relationships between the discriminants and indices of the constituent polynomials and provides a systematic mechanism for constructing infinite towers of monogenic number fields from simpler base cases. At the same time, the problem is notoriously delicate, as algebraic invariants behave in intricate ways under composition. Despite these difficulties, significant progress has been made. Jones and Harrington \cite{JonesHarr} investigated composed binomials; Sharma and collaborators \cite{S-L-S}, as well as Smith \cite{hanson}, studied iterated families; and Kaur et al.\ \cite{KKR} established precise criteria for polynomials of the form $f(x^k)$. Motivated by these advances, determining explicit arithmetic conditions under which both $f(x)$ and $f(g(x))$ remain monogenic continues to be a central problem.

Parallel to this qualitative theory, a quantitative perspective on monogeneity has emerged.
A sufficient condition for monogeneity is the squarefreeness of the discriminant $\D_f$, as follows immediately from \eqref{eq:disc-relation}.
However, this condition is far from necessary.
In 2012, Kedlaya \cite{kedlaya} constructed, for each integer $n\ge 2$, infinitely many degree-$n$ monic irreducible polynomials $f(x)\in\mathbb{Z}[x]$ with squarefree discriminant.
At the same time, several authors have exhibited infinite families of monogenic polynomials with non-squarefree discriminant.
In particular, Jones and White \cite{L3} studied the monogeneity of trinomials of the form $x^n+Ax^m+B$ and obtained asymptotic formulas for the number of monogenic trinomials with bounded coefficients.
Jones \cite{L1} further constructed infinitely many monogenic polynomials of prime degree whose discriminants are not squarefree, relying in part on $abc$-conjecture.
More recently, breakthrough results of Bhargava and collaborators \cite{Manjulcubic, Manjulquartic} showed that a positive proportion of number fields are not monogenic even in the absence of any local obstruction.

A closely related invariant is the \emph{index of a number field} $K$, defined by
\[
i(K)=\gcd\{[\mathbb{Z}_K:\mathbb{Z}[\alpha]] \mid \alpha\in\mathbb{Z}_K,\ K=\mathbb{Q}(\alpha)\}.
\]
If $K$ is monogenic, then necessarily $i(K)=1$, although the converse does not hold in general.
The systematic study of index divisors dates back to Dedekind \cite{RD1878}, who characterized the prime divisors of $i(K)$ in terms of the factorization of rational primes in $\mathbb{Z}_K$ and constructed the first example of a cubic field with $i(K)=2$.
Subsequently, Bauer \cite{Bauer1907} showed that for any integer $n$ and any prime $p<n$, there exists a number field $K$ of degree $n$ such that $p\mid i(K)$, while Zylinski \cite{Zylinski1913} proved that if $p\mid i(K)$, then necessarily $p<[K:\mathbb{Q}]$.
These results demonstrate that the index captures genuinely global arithmetic information that is not determined solely by the degree or discriminant.
A deeper understanding of the $p$-adic valuation $\nu_p(i(K))$ was initiated by Ore \cite{Ore1927}, who conjectured that the decomposition type of $p$ in $\mathbb{Z}_K$ does not, in general, uniquely determine $\nu_p(i(K))$.
This conjecture was partially resolved by Engstrom \cite{Engstrom1930}, who showed that for number fields of degree $n\le 7$, the prime ideal factorization of $p$ completely determines $\nu_p(i(K))$ and explicitly computed these valuations.
Later refinements and extensions were obtained by \'Sliwa \cite{Sliwa1982} for degrees up to $12$ under suitable hypotheses, and by Nart \cite{Nart1985}, who provided a $p$-adic characterization of the index.
Despite further progress in special cases (see, for example, \cite{GPP1991, Nak1983, PP2012, SW2003}), a general explicit description of $\nu_p(i(K))$ remains unknown.
This difficulty is formalized in Problem~22 of Narkiewicz \cite{Narkiewicz2004}, which asks for an explicit description of the highest power of a prime $p$ dividing $i(K)$.

Another fundamental aspect of the arithmetic of number fields is the structure of their Galois groups.
Hilbert’s irreducibility theorem guarantees the abundance of irreducible polynomials with Galois group $S_n$ or $A_n$. But imposing additional arithmetic constraints such as monogeneity or prescribed index behavior makes the problem substantially more delicate.
Only a limited number of constructions are known that simultaneously control monogeneity, discriminant structure and Galois group.

Beyond their intrinsic algebraic importance, monogenic polynomials also provide striking structural applications in analysis. When the auxiliary polynomial associated with a higher-order linear differential equation is monogenic, the existence of a power integral basis allows the characteristic roots to be expressed as integer linear combinations of a single algebraic integer. This arithmetic rigidity translates directly into analytic structure, yielding explicit descriptions of the general solution in terms of exponential functions with integral parameters.

\medskip

Motivated by these themes, in this paper, we study these problems for the family of irreducible polynomials of the type
\[
f(x)=(x^k+c)^m-ax^n\in\mathbb{Z}[x],
\]
where $k,m,n\in\mathbb{N}$ with $km > n$.
This family generalizes classical binomial and trinomial polynomials, which arise as special cases (for instance, when $a=0$ and $m=1$, or when $m=1$), and exhibits diverse arithmetic and Galois-theoretic behavior.

Our main contributions are summarized as follows.
We first compute the discriminant $\D_f$ of $f(x)$ explicitly and obtain necessary and sufficient conditions for monogeneity involving only the prime divisors of $a$, $c$, and the exponents $k,m,n$. 
 This yields a precise arithmetic criterion for when the number field generated by a root of $f(x)$ admits a power integral basis.
We then extend this characterization to polynomial compositions, establishing explicit conditions under which the composition $f(g(x))$ remains monogenic for an arbitrary polynomial $g(x)$. This provides a systematic method for constructing new monogenic families.
Next, under suitable hypotheses, we further derive quantitative estimates for the number of monogenic polynomials in these families. We also produce infinite families of monogenic polynomials within this class, including examples with non-squarefree discriminant, and determine conditions ensuring that the associated Galois group is the full symmetric group. 
From the complementary perspective of non-monogeneity, we analyze the field index $i(K)$ and, for each rational prime $p$, give explicit sufficient conditions guaranteeing that $\nu_p(i(K))=1$, thereby contributing partial progress toward Problem~22 of Narkiewicz \cite{Narkiewicz2004}.
Finally, as an application of our main results, we describe the structure of the solutions to a class of linear differential equations that are intimately related to polynomials of the form $f(x)$.
Several explicit examples are included to illustrate our results.

\section{Main Results}
Throughout the paper, $\D_f$ will stand for the discriminant  of  $f(x)=(x^k+c)^m-ax^n$, $km > n \geq 1$,~$t$ for $\gcd(n,k)$ and $n_1, k_1$ for $\frac{n}{t}, \frac{k}{t},$ respectively. More precisely, we prove the following results. 
\begin{theorem} \label{disc of f(x)}
Let $a,c \in \mathbb{Z}$ with $c \neq 0$, and let $m,n,k \in \mathbb{N}$ with $km>n$. 
Suppose that the polynomial $f(x) = (x^k + c)^m - a x^n \in \mathbb{Z}[x]$ is irreducible over $\mathbb{Z}$ and let $\theta$ be a root of $f(x)$. Then %This form is accurate other form of writing will be more messy.
\begin{equation}
    \D_f
=(-1)^{\binom{km}{2}+(km+n+t)(m-1)}\,a^{\,k(m-1)}c^{m(n-1)}\,
\left[\,(km)^{\,k_1m}c^{k_1m-n_1} - a^{k_1}n^{\,n_1}\,(km-n)^{\,k_1m-n_1}\right]^{t}.
\end{equation}
where $n = n_1 t$, $k = k_1 t$ and $t=\gcd(n,k)$.
\end{theorem}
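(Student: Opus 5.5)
We compute $\D_f$ through the resultant formula
\[
\D_f=(-1)^{\binom{N}{2}}\prod_{i=1}^{N} f'(\theta_i),\qquad N=km,
\]
where $\theta_1,\dots,\theta_N$ are the roots of $f(x)$. The plan is to reduce each factor $f'(\theta_i)$ to a simpler polynomial expression in $\theta_i$ using $f(\theta_i)=0$, and then evaluate the resulting product via norms and resultants.

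Differentiating gives
\[
f'(x)=km\,x^{k-1}(x^k+c)^{m-1}-an\,x^{n-1}.
\]
At a root we have $(\theta^k+c)^m=a\theta^n$, hence $(\theta^k+c)^{m-1}=a\theta^n/(\theta^k+c)$; note that $\theta^k+c\neq 0$ because $\theta\neq 0$ (as $c\neq 0$). Therefore
\[
f'(\theta)=\frac{a\theta^{n-1}}{\theta^k+c}\Big(km\,\theta^k-n(\theta^k+c)\Big)=\frac{a\theta^{n-1}\big((km-n)\theta^k-nc\big)}{\theta^k+c}.
\]
Taking the product over all roots, each factor is handled separately.

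\begin{itemize}
\item $\prod a=a^{km}$.
\item $\prod\theta_i=(-1)^{km}f(0)=(-1)^{km}c^m$, so $\prod\theta_i^{n-1}=\big((-1)^{km}c^m\big)^{n-1}$.
\item $\prod(\theta_i^k+c)$ equals, up to sign, the resultant of $f$ and $x^k+c$. Since $f\equiv -ax^n$ modulo $x^k+c$, this is $\prod_{\zeta^k=-c}(-a\zeta^n)$, i.e. $(-a)^k\big((-1)^k c\big)^n$ up to sign bookkeeping. Taking its reciprocal supplies the factor $a^{-k}$ that produces $a^{km-k}=a^{k(m-1)}$, together with a factor $c^{-n}$.
\item $\prod\big((km-n)\theta_i^k-nc\big)$ is the main computation.
\end{itemize}

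For the last factor, put $u_i=\theta_i^k$. The $u_i$ are not a single orbit of an explicit polynomial, so instead I would use $\mathrm{Res}(f,g)$ with $g(x)=(km-n)x^k-nc$, which equals $(km-n)^{km}\prod_{g(\beta)=0}f(\beta)$ up to sign. At a root $\beta$ of $g$ we have $\beta^k=nc/(km-n)$, so
\[
\beta^k+c=\frac{kmc}{km-n},\qquad f(\beta)=\Big(\frac{kmc}{km-n}\Big)^m-a\beta^n.
\]
The roots $\beta$ split according to $\beta^n$. Writing $\beta=\beta_0\zeta$ with $\zeta^k=1$, the value $\zeta^n$ ranges over the $k_1$-th roots of unity, each attained $t$ times. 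Using $\prod_{\omega^{k_1}=1}(A-B\omega)=A^{k_1}-B^{k_1}$, we obtain
\[
\prod_{\beta} f(\beta)=\Big[\Big(\frac{kmc}{km-n}\Big)^{mk_1}-a^{k_1}\beta_0^{nk_1}\Big]^t,\qquad \beta_0^{nk_1}=\Big(\frac{nc}{km-n}\Big)^{n_1}\cdot(\text{power of }k)\ldots
\]
More precisely, $\beta_0^{nk_1}=(\beta_0^k)^{n_1}=\big(nc/(km-n)\big)^{n_1}$.

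Clearing denominators by multiplying by $(km-n)^{mk_1}$ inside the bracket gives
\[
(km)^{k_1m}c^{k_1m}-a^{k_1}n^{n_1}c^{n_1}(km-n)^{k_1m-n_1}
=c^{n_1}\Big[(km)^{k_1m}c^{k_1m-n_1}-a^{k_1}n^{n_1}(km-n)^{k_1m-n_1}\Big].
\]
The remaining powers of $(km-n)$ cancel against the prefactor $(km-n)^{km}$, since $t\cdot mk_1=km$.

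Collecting exponents of $c$: the bracket contributes $c^{n_1t}=c^n$, which cancels the $c^{-n}$ from $\prod(\theta_i^k+c)^{-1}$, leaving $c^{m(n-1)}$. The exponent of $a$ is $km-k$. This matches the stated formula up to sign.

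The expected obstacle is the sign. It comes from three sources: the orientation convention for resultants, the factors $(-1)^{km}$ and $(-1)^{k}$ arising from $\prod\theta_i$ and $\prod(\theta_i^k+c)$, and the sign from the roots-of-unity product. I would track these carefully and reduce modulo $2$ to the exponent $\binom{km}{2}+(km+n+t)(m-1)$. As a check, I would verify the result in the case $m=1$ against the known trinomial discriminant formula for $x^k-ax^n+c$.
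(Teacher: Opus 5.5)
Your proposal is correct, and it follows a genuinely different route from the paper. The paper multiplies by $\theta$ to get $\theta f'(\theta)=(\theta^k+c)^{m-1}\bigl((km-n)\theta^k-nc\bigr)$. It then computes $\mathcal{N}_{K/\Q}$ of $z=(km-n)\theta^k-nc$ and of $\theta^k+c$ by exhibiting their minimal polynomials, $(x+kmc)^{k_1m}-a^{k_1}(x+nc)^{n_1}(km-n)^{k_1m-n_1}$ and $x^{k_1m}-a^{k_1}(x-c)^{n_1}$. For this it must show that $\Q(\theta^k)=\Q(\theta^t)$ has degree $k_1m$, using irreducibility of $f$ and hence of $(x^{k_1}+c)^m-ax^{n_1}$. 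It then applies the tower formula $\mathcal{N}_{K/\Q}(z)=\mathcal{N}_{\Q(z)/\Q}(z)^t$, so the exponent $t$ enters as a field degree.

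You instead flip each resultant and evaluate $f$ at the $k$ roots of the binomials $x^k+c$ and $(km-n)x^k-nc$. There $f$ collapses to $-a\zeta^n$ and $\bigl(\tfrac{kmc}{km-n}\bigr)^m-a\beta^n$ respectively. The exponent $t$ then comes from $\zeta\mapsto\zeta^n$ mapping the $k$-th roots of unity onto the $k_1$-th roots of unity with fibres of size $t$, and $\prod_{\omega^{k_1}=1}(A-B\omega)=A^{k_1}-B^{k_1}$ closes the computation. Your route is more elementary and never uses irreducibility. It therefore establishes the formula as a polynomial identity in $a,c$, valid for every polynomial of this shape. The paper's route stays within the norm formalism of $K/\Q$ but pays for it with the degree argument.

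The sign you left open does come out right. From $\prod_i g(\theta_i)=(-1)^{km\deg g}\,\mathrm{lc}(g)^{km}\prod_{g(\beta)=0}f(\beta)$ you get
$\prod_i(\theta_i^k+c)=(-1)^{km+k+kn}a^kc^n$ and $\prod_i\bigl((km-n)\theta_i^k-nc\bigr)=(-1)^{km}c^{n}(\calC-\calE)^t$.
Combined with $\prod_i\theta_i^{n-1}=(-1)^{km(n-1)}c^{m(n-1)}$, the total sign is $(-1)^{\binom{km}{2}+k(m+1)(n+1)}$. Since $t$ is odd whenever $k$ or $n$ is odd, a check of the four parity cases gives $k(n+1)\equiv n+t\pmod 2$. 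Hence $k(m+1)(n+1)\equiv(km+n+t)(m-1)\pmod 2$: both sides are even when $m$ is odd, and when $m$ is even $km$ is even. This is exactly the stated sign.

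One small correction: $\theta^k+c\neq0$ does not follow from $\theta\neq0$. It follows from $(\theta^k+c)^m=a\theta^n$ together with $a\neq0$. If $a=0$, irreducibility forces $m=1$ and $f=x^k+c$, where $\theta^k+c=0$ and your division fails. That case is immediate, since $\D_f=(-1)^{\binom{k}{2}}k^kc^{k-1}$, as the formula predicts. Alternatively, both sides are polynomials in $a$ that agree for all $a\neq0$, so they agree at $a=0$ as well.
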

We shall denote by $\calC,\, \calE$, the integers defined by
\begin{equation}\label{CE}
	\calC=(km)^{\,k_1m}c^{k_1m-n_1}  ~{\rm and}  ~\calE= a^{k_1}n^{\,n_1}\,(km-n)^{\,k_1m-n_1},
\end{equation}
then in view of the above theorem $\D_f = (-1)^{\binom{km}{2}+(km+n+t)(m-1)}\,a^{\,k(m-1)}c^{m(n-1)}\,(\calC-\calE)^{t}.$

\begin{theorem} \label{monogeneity of f(x)}
	Let $K=\mathbb Q(\theta)$ be an algebraic number field with $\theta$ in the ring $\Z_K$ of algebraic integers of $K$ having  minimal polynomial   $f(x) = (x^k+c)^m-ax^n$ of degree $km$ over $\mathbb Q$ with   $km>n\geq{1}$. Let $\gcd(n,k) = t$ and $n = n_1t,\, k = k_1t.$ A prime divisor $p$ of the discriminant  $\D_f$ of $f(x)$ does not divide $[\Z_{K} : \Z[\theta]]$ if and only if $p$  satisfies one of the following conditions: 
	 \begin{enumerate}[label=\textup{(\roman{enumi})}]
    \item  If $p\mid c$  and $p\mid a$, then $m=1$ and $p^2\nmid c$;
    \item  If $p\mid c,~~p\nmid a$ and $n=1$  with $j\geq 1$ as the highest power of $p$ dividing $km-1$, then either $p\mid c_2$ and $p\nmid a_1$ or $p\nmid c_2[a^ka_1^{\,km-1}-(-mac_2)^{\,km-1}]$, where $c_2=\frac{c}{p}$ and $a_1=\frac{a^{p^j}-a}{p}$;
    \item  If $p\mid c,~~p\nmid a$ and $n\geq 2$ with $\ell\geq 0$ as the highest power of $p$ dividing $k-n$, then $m=1$ and  either
$p\mid a_{1}$ and $p\nmid c_{2}$, or $p\nmid a_{1}c_{2}^{\,n-1}
\bigl[a^{n_1}a_{1}^{\,k_1-n_1}-(-c_{2})^{\,k_1-n_1}\bigr]$, where $c_2=\frac{c}{p}$ and $a_1=\frac{a^{\,p^{\ell}}-a}{p}$;
    \item  If $p\nmid c$ and $p\mid a$ and further;
    \begin{itemize}
        \item[\textup{(a)}] If $m\geq 2$, then $p^2\nmid a$;
        \item [\textup{(b)}] If $m=1$ with $\ell\geq 1$ as the highest power of $p$ dividing $k$, then either $p\mid a_2$ and $p\nmid c_1$ or $p\nmid a_2\bigl[a_{2}^{k_{1}}(-c)^{n_{1}}-(-c_{1})^{k_{1}}\bigr]$, where $a_2=-\frac{a}{p}$ and $c_1=\frac{(-c)^{\,p^{\ell}}+c}{p}$;
    \end{itemize}
    \item If $p \nmid ca$ and $p \mid k$, where $k = p^{j}s'$ and $n = p^{j}s$ with 
$p \nmid \gcd(s,s')$, then the polynomials $\frac{1}{p}[a_1\,x^{n} - p m\, t(x)(x^{k} + c)^{\,m-1}]$ and $(x^{s'} + c)^m - a x^s$ are coprime, where $a_1=\frac{a^{\,p^{j}}-a}{p}$ and\\ $t(x) = \frac{1}{p} \left( \sum_{i=0}^{p^{j}-1} \binom{p^{j}}{i} x^{\,s' \,i} \,c^{\,p^{j}-i} - c \right)$;
     \item If $p\nmid cak$ and $p\mid m$, then $p \mid n$ and $p^2\nmid (a^{p}-a)$;
    \item  If $p\nmid cakm$, then $p^2\nmid (\calC-\calE).$
	\end{enumerate}
\end{theorem}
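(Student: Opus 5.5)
The plan is to apply Dedekind's criterion prime by prime, using Theorem~\ref{disc of f(x)} to list the primes dividing $\D_f$. Write $\bar f$ for the reduction modulo $p$ and factor $\bar f=\prod_i \bar g_i^{e_i}$ with monic lifts $g_i\in\Z[x]$. Put
\[
F(x)=\frac{1}{p}\Bigl(f(x)-\prod_i g_i(x)^{e_i}\Bigr),\qquad h(x)=\prod_i g_i(x).
\]
Then $p\nmid[\Z_K:\Z[\theta]]$ if and only if, for every $i$ with $e_i\ge 2$, $\bar g_i$ does not divide $\bar F$. Equivalently, $\gcd(\bar F,\ \bar f/\bar h)=1$. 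By Theorem~\ref{disc of f(x)}, $p\mid\D_f$ forces $p$ to divide $a$, $c$, or $\calC-\calE$ (the last only when $k(m-1)=0$ or $m(n-1)=0$ fails to hide it). This gives the case split of the statement: $p$ divides both $a$ and $c$; $p$ divides exactly one of them; or $p\nmid ac$. In the last case we subdivide according to whether $p\mid k$, $p\mid m$, or $p\nmid km$, since $p\mid\calC-\calE$ combined with $p\nmid ac$ forces $p\mid km$ or else leaves only case (vii).

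\textbf{Cases with $p\mid c$ or $p\mid a$.}
\begin{itemize}
\item If $p\mid a$ and $p\mid c$, then $\bar f=x^{km}$. We get $F=(f-x^{km})/p$, and the condition reduces to $p\nmid F(0)=c^m/p$. This holds exactly when $m=1$ and $p^2\nmid c$; this is (i).
\item If $p\mid a$ and $p\nmid c$, then $\bar f=(x^k+c)^m$.
  \begin{itemize}
  \item For $m\ge2$, every irreducible factor of $x^k+\bar c$ is repeated, and $F$ is congruent to $-(a/p)x^n$ plus the correction coming from lifting $x^k+c$. Choosing the lift carefully, so that $(x^k+c)^m$ is its own lift, we get $F=-(a/p)x^n$. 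Since $x\nmid x^k+\bar c$, this is coprime to $\bar f$ iff $p^2\nmid a$; this is (iv)(a).
  \item For $m=1$ with $p\mid k$, the factor $x^k+c\equiv(x^{k/p^\ell}+c')^{p^\ell}$ must be lifted with the Frobenius twist. This produces $c_1=((-c)^{p^\ell}+c)/p$. The coprimality becomes a resultant computation between $a_2x^n-c_1$-type expressions and $x^{k_1}+\ldots$ after reducing exponents by $t$. That yields the bracket $a_2^{k_1}(-c)^{n_1}-(-c_1)^{k_1}$ in (iv)(b).
  \end{itemize}
\item If $p\mid c$ and $p\nmid a$, then $\bar f=x^n(x^{km-n}-\bar a)$ up to the binomial expansion. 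The multiplicity of $x$ and the separability of $x^{km-n}-\bar a$ (controlled by $p^\ell\,\|\,k-n$, respectively $p^j\,\|\,km-1$ when $n=1$) determine which factors repeat. Again the Frobenius lift introduces $a_1=(a^{p^\ell}-a)/p$. Evaluating $\bar F$ at the root $x=0$ and at the roots of the repeated factor gives (ii) and (iii). The vanishing of all of $m\binom{m}{i}$-type terms forces $m=1$ when $n\ge2$.
\end{itemize}

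\textbf{Cases with $p\nmid ac$.}
\begin{itemize}
\item If $p\mid k$, write $k=p^js'$ and $n=p^js$. Then $\bar f=\bigl((x^{s'}+c)^m-ax^s\bigr)^{p^j}$ modulo $p$. We use the lift $\bigl((x^{s'}+c)^m-ax^s\bigr)^{p^j}$ and expand with $t(x)$ capturing $\bigl((x^{s'})^{p^j}+c^{p^j}\bigr)$-corrections. This gives exactly the polynomial in (v).
\item If $p\mid m$ but $p\nmid k$, a similar expansion forces $p\mid n$ for the repeated factors to be handled, and leaves the condition $p^2\nmid a^p-a$; this is (vi).
\item If $p\nmid akmc$, then $p\mid\calC-\calE$ means $\bar f$ has a repeated root. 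Here I would avoid Dedekind and argue via the discriminant: the repeated factor is a single double root, and a standard argument (Taylor expansion at a multiple root, or the Ore/Newton-polygon criterion for one double factor) shows that $p\nmid$ index iff $p^2\nmid\calC-\calE$.
\end{itemize}

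The main obstacle I expect is the last step and the $p\mid k$ / $p\mid m$ cases. There one must identify precisely the repeated factors of $\bar f$ and show that the Dedekind remainder $\bar F$ vanishing on them is equivalent to $p^2$ dividing the discriminant factor. I would first try to show that the repeated roots of $\bar f$ are exactly the common roots of $\bar f$ and $\bar f'$. These are the roots where $x^k$ takes the value $nc/(km-n)$. Then I would compute $F$ at such a root and compare it with $\calC-\calE$ via a resultant identity.
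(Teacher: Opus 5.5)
Your cases (i)--(vi) follow the paper's route: Dedekind's criterion in the form $f\notin\langle p,g\rangle^2$, with Frobenius-type binomial lifts as in Lemmas~\ref{binomial expansion} and~\ref{lemma:3.6}. They are fine as a sketch, with two small corrections:
\begin{itemize}
\item In (iv)(a), $(x^k+c)^m$ is generally not a product $\prod g_i^{e_i}$ of lifts, and never when $p\mid k$. Argue with the ideal form instead: $(x^k+c)^m\in\langle p,g\rangle^2$ because $m\ge2$.
\item In (iii), $m=1$ is forced simply because $f\in\langle p,x\rangle^2$ iff $p^2\mid c^m$.
\end{itemize}

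\textbf{The gap is case (vii).} Your premise that the repeated part of $\bar f$ is a single double root is false when $t=\gcd(n,k)>1$.
\begin{itemize}
\item A repeated root $\beta$ satisfies $\beta^n\equiv\alpha_2$ as well as $\beta^k\equiv\alpha_1$, so $\beta^t\equiv\alpha_1^{u_1}\alpha_2^{u_2}=:\bar\alpha$.
\item Conversely, every root of $x^t-\bar\alpha$ is a repeated root (Proposition~\ref{2.06}).
\item Since $p\nmid t$, $\bar f$ therefore has $t$ distinct repeated roots, while $v_p(\D_f)=t\,v_p(\calC-\calE)$.
\end{itemize}
A one-double-root discriminant test, ``$p\nmid[\Z_K:\Z[\theta]]$ iff $v_p(\D_f)\le1$'', would then put $p$ in the index whenever $t\ge2$, even when $p\,\|\,(\calC-\calE)$. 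That contradicts (vii). Your fallback, computing $F$ at such a root and comparing with $\calC-\calE$ ``via a resultant identity'', names the crux but does not supply it.

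\textbf{How the paper closes it.} It writes $f(x)=(x^t-\alpha)q(x)+R$, where $R=(\alpha^{k_1}+c)^m-a\alpha^{n_1}$ and $\alpha\equiv\alpha_1^{u_1}\alpha_2^{u_2}\pmod{p^2}$. This reduces Dedekind to the condition $p^2\nmid R$. A mod-$p^2$ expansion (Proposition~\ref{4.4:prop}) then gives $R\equiv p\mathfrak{s}\cdot\frac{at}{k}\,\alpha_2\alpha_1^{-n_1}$, where $p\mathfrak{s}=\alpha_1^{n_1}-\alpha_2^{k_1}$. Finally, $\alpha_2^{k_1}-\alpha_1^{n_1}$ is, modulo $p^2$, a unit multiple of $\calC-\calE$.

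\textbf{How to rescue your discriminant idea.} Pass first to $F_0(y)=(y^{k_1}+c)^m-ay^{n_1}$, so that $f(x)=F_0(x^t)$.
\begin{itemize}
\item Since $\gcd(k_1,n_1)=1$, $\bar F_0$ has exactly one repeated root $\bar\alpha\in\F_p$.
\item That root is exactly double, because $\bar\alpha^2\bar F_0''(\bar\alpha)$ is a unit times $n_1(k_1m-n_1)$.
\item The prime $p$ is odd, since $p\nmid km\,n(km-n)$ excludes $p=2$.
\item $v_p(\D_{F_0})=v_p(\calC-\calE)$.
\end{itemize}
For odd $p$, a quadratic local factor gives the maximal order iff its discriminant has valuation at most $1$. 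Combined with Dedekind at $y-\bar\alpha$, this yields $p^2\mid F_0(\alpha)\iff p^2\mid\calC-\calE$. The identity $f=(x^t-\alpha)q(x)+F_0(\alpha)$, with $x^t-\bar\alpha$ dividing $\bar q$, transfers this to $f$.

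As written, your case (vii) neither identifies the $t$ repeated roots nor proves the mod-$p^2$ comparison, and that comparison is the substantive step.
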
    
We want to point out here that the above theorem for $k=2,~c=1$ and $m=n$ yields the main result of \cite{BNW026}.\\
The following corollaries are an immediate consequence of  Theorem $\ref{monogeneity of f(x)}$. 
\begin{corollary}\label {Cor:1.3}
	Let $K = \Q(\theta)$ and $f(x) = (x^k+c)^m-ax^n$ be as in Theorem \ref{monogeneity of f(x)}. Then $\Z_{K} = \Z[\theta]$ if and only if each prime $p$ dividing $\D_f$ satisfies one of the conditions \textup{(i)$-$(vii)} of Theorem \ref{monogeneity of f(x)}.
\end{corollary}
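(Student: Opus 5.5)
The corollary follows from the index–discriminant relation \eqref{eq:disc-relation} together with Theorem~\ref{monogeneity of f(x)}; the plan is to show that the only primes that can divide the index are those dividing $\D_f$, and then check each of those with the theorem.

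First, by \eqref{eq:disc-relation} we have $\D_f=[\Z_K:\Z[\theta]]^2 d_K$. Hence any prime $p$ dividing $[\Z_K:\Z[\theta]]$ also divides $\D_f$. It follows that
\[
\Z_K=\Z[\theta] \iff [\Z_K:\Z[\theta]]=1 \iff p\nmid [\Z_K:\Z[\theta]] \text{ for every prime } p\mid \D_f .
\]
Here $\D_f$ is the explicit nonzero integer computed in Theorem~\ref{disc of f(x)}. It is nonzero because $f(x)$ is irreducible, hence separable.

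Second, fix a prime $p\mid\D_f$. Theorem~\ref{monogeneity of f(x)} states that $p\nmid[\Z_K:\Z[\theta]]$ if and only if $p$ satisfies the applicable condition among (i)--(vii). These conditions are organised by mutually exclusive and exhaustive cases on the divisibility of $c$, $a$, $k$ and $m$ by $p$, so exactly one case applies to each $p$. Combining this with the equivalence in the first step proves both directions of the corollary at once.

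The only point needing care is this exhaustiveness, so that "satisfies one of the conditions" is meaningful for every prime divisor of $\D_f$. The cases cover the following situations:
\begin{itemize}
\item $p\mid c$ and $p\mid a$: condition (i).
\item $p\mid c$ and $p\nmid a$: condition (ii) when $n=1$, condition (iii) when $n\ge 2$.
\item $p\nmid c$ and $p\mid a$: condition (iv).
\item $p\nmid ca$: condition (v) when $p\mid k$, condition (vi) when $p\nmid k$ and $p\mid m$, condition (vii) when $p\nmid km$.
\end{itemize}
In case (v), it must still be checked that the normalisation $p\nmid\gcd(s,s')$ can always be arranged. This is done by taking $p^j$ to be the exact power of $p$ dividing $\gcd(n,k)$ when $p\mid n$, and $j$ equal to the $p$-adic valuation of $k$ otherwise.
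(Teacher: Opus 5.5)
Your argument is correct and is exactly the reasoning behind the paper's remark that the corollary is an immediate consequence of Theorem~\ref{monogeneity of f(x)}. By \eqref{eq:disc-relation}, only primes dividing $\D_f$ can divide $[\Z_K:\Z[\theta]]$, and each such prime falls into exactly one of the cases (i)--(vii).

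One small slip is in the ``otherwise'' branch of your last sentence. As written it would not give $n=p^js$ with $s\in\Z$, and in fact it never arises. If $p\mid\D_f$, $p\nmid ca$ and $p\mid k$, then $p\mid\calC-\calE$ and $p\mid\calC$, hence $p\mid\calE$ and so $p\mid n$, as the paper notes in its proof of case (v). Thus $j=v_p(\gcd(n,k))$ always works.
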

%For any nonzero integer $n$, we write $\operatorname{rad}(n)$ for the square-free part of $n$, that is, the product of the distinct prime divisors of $n$.

\begin{definition}Let $q$ be a prime number. A polynomial $a_nx^n+a_{n-1}x^{n-1}+\cdots+a_{0}\in \Z[x]$ with $a_n\neq 0$ is called a $q-$Eisenstein polynomial  if $q\nmid a_n,~q\mid a_i$ for $0\leq i\leq n-1$ and $q^2\nmid a_{0}$.
\end{definition}
The following corollary provides an infinite family of monogenic  polynomials. 
The proof is given in Section~\ref{sec:4}. This plays a crucial role in deriving analytic results on the monogeneity of $f(x)$.
\begin{corollary}\label{6.1lemma}
Let $q$ be a prime such that $q \mid\mid a$ and let $m \ge 2$. Then $f(x) = (x^{q^j} \pm 1)^m - a x^n$ is irreducible. Furthermore, a prime divisor $p$ of the discriminant  $\D_f$ of $f(x)$ does not divide $[\Z_{K} : \Z[\theta]]$ if and only if $p$  satisfies one of the following conditions: 
\begin{enumerate}
    \item If $p\mid a$, then $p^2\nmid a$;
    \item If $p \nmid a$ and $p \mid m$, then $p\mid n$ and  $p^2 \nmid (a^p - a)$;
    \item If $p \nmid am$, then $p^2 \nmid (\mathcal{C} - \mathcal{E})$.
\end{enumerate}
\end{corollary}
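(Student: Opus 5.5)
Two things need proof: that $f(x)=(x^{q^j}\pm1)^m-ax^n$ is irreducible, and that the monogeneity criterion of Theorem~\ref{monogeneity of f(x)} collapses to conditions (1)--(3) when $k=q^j$ and $c=\pm1$.

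\textbf{Irreducibility.} I will show that $f(x+\epsilon)$ is $q$-Eisenstein for a suitable $\epsilon\in\{\pm1\}$. Write $c=\pm1$ and $k=q^j$. Over $\F_q$, Frobenius gives
\[
x^{q^j}+c\equiv (x+c)^{q^j}\pmod q,
\]
where we use $c^{q^j}=c$ (true for $q$ odd, and true for $q=2$ because $-1\equiv 1$). Since $q\mid a$, this yields $f(x)\equiv (x+c)^{q^jm}\pmod q$. Substituting $x\mapsto x-c$ gives $f(x-c)\equiv x^{q^jm}\pmod q$, so $f(x-c)$ is monic with all lower coefficients divisible by $q$.

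For the Eisenstein property it remains to check the constant term $f(-c)=((-c)^{q^j}+c)^m-a(-c)^n$. The first summand is divisible by $q^m$, hence by $q^2$ because $m\ge2$. Since $q\,\|\,a$, the second summand $a(-c)^n$ has $q$-valuation exactly $1$. So $q^2\nmid f(-c)$, the shifted polynomial is $q$-Eisenstein, and $f$ is irreducible. (If $q=2$ and $(-c)^{q^j}+c$ vanishes or is merely even, the argument is unchanged: $m\ge2$ still forces $q^2$ to divide the first term.)

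\textbf{Reduction of the criterion.} Apply Theorem~\ref{monogeneity of f(x)} with $c=\pm1$, $k=q^j$, $m\ge2$, going through its cases for a prime $p$ dividing $\D_f$.
\begin{itemize}
\item Since $p\nmid c$ for every $p$, cases (i)--(iii) never occur.
\item Case (iv)(b) needs $m=1$, so it is excluded. Case (iv)(a) becomes condition (1).
\item Case (vi) becomes condition (2), provided the hypothesis $p\nmid k$ can be dropped. Case (vii) becomes condition (3), provided $p\nmid k$ can be dropped.
\end{itemize}

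The remaining work concerns primes with $p\mid k$, i.e.\ $p=q$. Such a prime divides $a$, so it falls under (iv)(a), not (v). Hence case (v), which only treats $p\nmid ca$, never arises. For the same reason the hypothesis "$p\nmid k$" in (vi) and (vii) is automatic whenever $p\nmid a$. Conversely, $q\mid a$ with $q^2\nmid a$ is exactly the assumption $q\,\|\,a$, so (1) holds for $q$ automatically and the list in the corollary is consistent.

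\textbf{Main obstacle.} The hard part is the case bookkeeping, specifically that case (v) is vacuous. This rests on the observation that the only prime dividing $k$ is $q$, and $q\mid a$. I also need to check that the discriminant formula of Theorem~\ref{disc of f(x)} behaves with $c=\pm1$, i.e.\ that $c^{m(n-1)}$ is a unit and $\calC=(km)^{k_1m}(\pm1)^{k_1m-n_1}$. This introduces no new primes beyond those of $a$, $km$ and $\calC-\calE$, so every $p\mid\D_f$ is covered by (1)--(3).
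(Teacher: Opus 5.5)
Your proposal is correct and follows essentially the same route as the paper. You shift $f$ to make it $q$-Eisenstein, and your uniform substitution $x\mapsto x-c$, via the congruence $f(x)\equiv (x+c)^{q^jm}\pmod q$, handles both signs at once and justifies the middle coefficients more cleanly than the paper's $t\equiv 1\pmod q$ computation. You then do the same case bookkeeping in Theorem~\ref{monogeneity of f(x)}: $c=\pm1$ rules out (i)--(iii), $m\ge 2$ rules out (iv)(b), and $\rad(k)=q\mid a$ rules out (v) and makes the hypothesis $p\nmid k$ in (vi) and (vii) automatic.
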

 We now define a set $\scrE_f$ for the polynomial $f$ which will be used in the proof of Theorem~\ref{monogeneity of f(g(x))}.

\begin{definition}
Let $f(x) =(x^k+c)^m-ax^n \in \mathbb{Z}[x]$. 
We define the \emph{set of exceptional primes} \(\scrE_f\) associated with \(f(x)\) by
\[  \scrE_f = \left\{\, p \text{ prime} : p \mid \D_f \text{ and } p \nmid cak m \right\}\cup\left\{\, p \text{ prime} : p \mid \D_f \text{ and } m=1,\,p\mid c \textit{ and } p\nmid a(k-n)\right\}, \]
where \(\D_f\) denotes the discriminant of \(f(x)\).
\end{definition}
The next theorem provides conditions under which $f(g(x))$ is monogenic, where $g(x)$ is an arbitrary polynomial.

 \begin{theorem}\label{monogeneity of f(g(x))}
    Let $f(x) = (x^k+c)^m-ax^n\in \mathbb{Z}[x]$ be irreducible over $\mathbb{Q}$ and let  $g(x)\in \Z[x]$ be a polynomial of degree $d$ such that \(T(x) := f(g(x))\) is irreducible over $\mathbb{Q}$.  Let $K = \mathbb{Q}(\theta)$, where $\theta$ is a root of $T(x)$. Assume that $\rad(k)\mid ca$ and that, for every prime $p \in \scrE_f$, 
we have $p \mid g(0)$, $p \nmid d$, and the polynomial $g(x)$ satisfies \( g'(x) \equiv d x^{d-1} \pmod{p}.\) Furthermore, in the case $n = 1$, we assume that for every prime $p$ satisfying 
$p \mid \D_f$, $p \mid c$, and $p \nmid a$, one has $p \nmid \D_g$.
 Then, the following statements are equivalent:
    %when $n=1$ and $p$ is prime such that $p\mid \D_f,$ $p \mid c$ and $p \nmid a$, then $p \nmid \D_g$. 
   \begin{itemize}
    \item[1.] No prime dividing $\D_f$ divides $[\Z_K:\Z[\theta]]$,
    \item[2.] $f(x)$ is monogenic.
\end{itemize}
    \end{theorem}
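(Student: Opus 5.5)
The plan is to compare, prime by prime, the index of $\theta$ (a root of $T=f\circ g$) with the index of a root $\alpha$ of $f$. The key tool is Dedekind's criterion in the form of Theorem~\ref{monogeneity of f(x)}, applied through the substitution $x\mapsto g(x)$. I read statement~1 as concerning $[\Z_K:\Z[\theta]]$ and statement~2 as saying that $\Z_{\Q(\alpha)}=\Z[\alpha]$. It then suffices to prove that for every prime $p\mid \D_f$,
\[
p\mid [\Z_K:\Z[\theta]] \iff p\mid [\Z_{\Q(\alpha)}:\Z[\alpha]].
\]
Indeed, by Corollary~\ref{Cor:1.3} the right-hand side failing for all $p\mid\D_f$ is exactly the monogeneity of $f$. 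The first step is to record the factorization $\D_T=\pm\,\mathrm{lc}(g)^{\ast}\,\D_f^{\,d}\prod_{f(\beta)=0}\D_{g-\beta}$, so that the primes relevant to both statements are controlled by $\D_f$.

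I will then split the primes $p\mid\D_f$ according to the cases of Theorem~\ref{monogeneity of f(x)}.

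\emph{Primes with $p\mid ca$, and primes with $p\mid k$.} Since $\rad(k)\mid ca$, case (v) never occurs, and the primes dividing $k$ are already among those dividing $ca$. For $p\mid c$ or $p\mid a$, the reduction $\overline{f}$ is a product of powers of $x$ and of $x^k+c$ modulo $p$. The reduction $\overline{T}=\overline f(\overline g)$ then factors as powers of $\overline g$ and of $\overline g^{\,k}+c$. In Dedekind's criterion I write $T=\prod \overline g_i^{\,e_i}+pF(g(x))$, with the same remainder polynomial $F$ as for $f$. The coprimality condition "$\overline F(\overline g)$ is coprime to $\overline g_i(\overline g)$ for the repeated factors" reduces, via the resultant identity $\mathrm{Res}(h\circ g,\,u\circ g)=\mathrm{lc}(g)^{\ast}\,\mathrm{Res}(h,u)^{d}$, to the corresponding condition for $f$. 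This gives cases (i), (iii), (iv). Case (ii), where $n=1$, is where the extra hypothesis $p\nmid\D_g$ is needed: it ensures that the factors $\overline g-\beta$ are squarefree, so that no new repeated factors appear.

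\emph{Primes in $\scrE_f$.} Here $p\nmid cakm$, or $m=1$ and $p\mid c$, and $\overline f$ has a single repeated factor. This factor is typically linear, $(x-r)^2$ with $r$ a root of $f'$. The assumptions $p\mid g(0)$, $p\nmid d$ and $g'\equiv dx^{d-1}\pmod p$ say that $g\equiv x^d$ modulo $p$ up to its constant term, which is itself $\equiv 0$. Hence $\overline T(x)=\overline f(x^d)$. Because $p\nmid d$, the polynomial $x^d-\bar r$ is separable modulo $p$ when $\bar r\neq0$. Its square factors of $\overline T$ then come exactly from the square factor of $\overline f$. I then apply Dedekind to $T=f(x^d)+(\text{terms divisible by }p)$ and use the fact that the remainder polynomial evaluated at a root of $x^d-\bar r$ is the remainder for $f$ evaluated at $\bar r$ (plus a $p$-multiple correction). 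This shows that the condition $p^2\nmid(\calC-\calE)$, respectively condition (iii), is equivalent in the two settings.

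\emph{Main obstacle.} I expect the hardest step to be this last one: checking that the correction term $T-f(x^d)$, which is divisible by $p$ but not by $p^2$, does not alter the Dedekind remainder at the repeated factor. My first attempt would be to expand $g=x^d+ph(x)$ modulo $p^2$, so that
\[
f(g)\equiv f(x^d)+p\,h(x)\,f'(x^d)\pmod{p^2},
\]
and then observe that $f'(x^d)$ vanishes modulo $p$ on the repeated roots. If this fails for a general $g$, I would fall back on the resultant-based argument used for the primes dividing $ca$.
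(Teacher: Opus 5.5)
Your reduction is the paper's. You show that for every prime $p\mid\D_f$, $p\mid[\Z_K:\Z[\theta]]$ if and only if $p$ divides the index of a root of $f$. You do this by running through the cases of Theorem~\ref{monogeneity of f(x)} (this is Proposition~\ref{2.5:pro}), using $\rad(k)\mid ca$ to remove case (v) and $p\nmid\D_g$ in case (ii), and you finish with Corollary~\ref{Cor:1.3}.

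\textbf{The gap.} It is in the step you flag yourself, the primes of $\scrE_f$. The hypotheses $p\mid g(0)$, $p\nmid d$, $g'\equiv dx^{d-1}\pmod p$ do \emph{not} give $g\equiv x^d\pmod p$. The derivative condition says nothing about the coefficients of $x^i$ with $p\mid i$, so in general $\bar g=x^d+\bar w(x)^p$ with $\bar w(0)=0$. For instance, $p=3$ and $g=x^4+x^3$ satisfy all three conditions. Consequently:
\begin{itemize}
\item $\overline T=\overline f(x^d)$ is false in general.
\item $T-f(x^d)$ need not be divisible by $p$.
\item The expansion $g=x^d+ph(x)$ modulo $p^2$, on which your control of the ``correction term'' rests, is unavailable in general once $p<d$.
\end{itemize}
Falling back on the resultant argument does not close this. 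That argument only compares Dedekind conditions at factors of $\overline T$ lying over repeated factors of $\overline f$. Your proof that no other repeated factors occur was again deduced from $\overline T=\overline f(x^d)$, via separability of $x^d-\bar r$.

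\textbf{The fix.} What the hypothesis actually provides, and all that is needed, is that $\bar g'(\gamma)=d\gamma^{d-1}\neq0$ for $\gamma\neq0$ while $\bar g(0)=0$. Hence $\bar g-\beta$ is separable for every $\beta\neq0$.
\begin{itemize}
\item For $p\nmid cakm$, all roots of $\overline f$ are nonzero.
\item For $m=1$, $p\mid c$, $p\nmid a(k-n)$, the root $0$ is already repeated.
\end{itemize}
So in both cases every repeated root of $\overline T$ lies over a repeated root of $\overline f$. In the first case, $\bar g^{\,t}-\bar\alpha$ is exactly the squarefree product of the repeated irreducible factors of $\overline T$.

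No comparison with $f(x^d)$ is then needed. As in the paper, write $f=(x^t-\alpha)q_0(x)+R$ with $q_0\in\Z[x^t]$ and $R=(\alpha^{k_1}+c)^m-a\alpha^{n_1}$, so that $T=(g^t-\alpha)q_0(g)+R$. For every irreducible $h\mid\bar g^{\,t}-\bar\alpha$, both $g^t-\alpha$ and $q_0(g)$ lie in $\langle p,h\rangle$. So $T\in\langle p,h\rangle^2$ iff $p^2\mid R$, which holds iff $p^2\mid(\calC-\calE)$ by Propositions~\ref{2.06} and~\ref{4.4:prop}. In the case $m=1$, $p\mid c$, one has $T\equiv c$ modulo $\langle p,h\rangle^2$ for $h\mid\bar g$.

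Equivalently: at a repeated root $\beta$ of $\overline f$, the residue of $f(\tilde\beta)$ modulo $p^2$ does not depend on the lift $\tilde\beta$, because $p\mid f'(\tilde\beta)$. Since $g(\tilde\gamma)$ is such a lift, there is no correction term to control at all.

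\textbf{Two smaller slips.}
\begin{itemize}
\item For $p\mid c$, $p\nmid a$, one has $\overline f=x^n(x^{km-n}-\bar a)$, not a product of powers of $x$ and $x^k+\bar c$.
\item $T=\prod g_i(g)^{e_i}+pF(g)$ is not a Dedekind decomposition ``with the same $F$'', because $\bar g_i\circ\bar g$ is reducible. The re-lifting error is harmless only at factors with $e_i\ge2$. This is precisely why new repeated factors over simple roots of $\overline f$ must be excluded by the hypotheses on $g$.
\end{itemize}
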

The following theorem provides an asymptotic formula for the number of such monogenic polynomials as defined in Theorem~\ref{monogeneity of f(x)}.

\begin{theorem} \label{1.1:thm}
        Let $n \geq 2,m \geq 2$ be positive integers such that there exists a prime $q \mid n$ with $q \nmid m$ and $q^{v_q(n)}m > n$. Let $j = v_q(n)$ and  $\sfrak=\frac{n}{q^j}$. Then there are
        \begin{align} \label{1.1:eqn}
            \frac{X}{\gamma^2 \zeta(2)} \prod_{p \mid qm} \left( 1- \frac{1}{p^2}\right)^{-1} \prod_{p \nmid qm\sfrak (m-\sfrak)} \left( 1-\frac{1}{p^2-1} \right) + O(X^{3/4}).
        \end{align}
        monogenic polynomials $f(x)= (x^{q^j}\pm 1)^{m} - ax^n$ satisfying $a \leq X$. The $O$-constant in \eqref{1.1:eqn} may depend on $n$ and $m$.
\end{theorem}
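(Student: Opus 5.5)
Fix $n,m,q,j,\sfrak$ as in the statement and write $k=q^j$, so $f(x)=(x^{k}\pm1)^m-ax^n$ with $c=\pm1$ and $km>n$. Since $\rad(k)=q$, the plan is to restrict to $a$ with $q\,\|\,a$. Corollary~\ref{6.1lemma} then gives irreducibility, and it turns monogeneity into local conditions at each prime.

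With $t=\gcd(n,k)=q^j$ we have $k_1=1$ and $n_1=\sfrak$, so by Theorem~\ref{disc of f(x)}
\[
\calC-\calE=(km)^{m}c^{m-\sfrak}-a\,n^{\sfrak}(km-n)^{m-\sfrak}.
\]
Up to the factor $q^{j\sfrak}$ coming from $n^{\sfrak}$ and $(km-n)^{m-\sfrak}=q^{j(m-\sfrak)}(m-\sfrak)^{m-\sfrak}$, this is linear in $a$. The plan is to count $a\le X$ (with $q\|a$) satisfying the conditions of Corollary~\ref{6.1lemma}:
\begin{enumerate}
\item $a$ is squarefree;
\item for $p\mid m$, $p\nmid a$, we need $p\mid n$ and $p^2\nmid a^p-a$;
\item for $p\nmid am$, we need $p^2\nmid(\calC-\calE)$.
\end{enumerate}
For primes $p\mid m$ with $p\nmid n$, condition (2) forces $p\mid a$. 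Equivalently, I expect the normalisation in the theorem to absorb this: writing $a=\gamma a'$ for a suitable fixed $\gamma$ built from $q$ and such primes, the count becomes a count over $a'\le X/\gamma$. I am unsure of the exact definition of $\gamma$ and will fix it from the local analysis, since the main term has the shape $X/(\gamma^2\zeta(2))$.

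Next, translate each condition into a local density. Squarefreeness of $a$ away from $qm$ gives the factor $\prod_p(1-p^{-2})=1/\zeta(2)$, corrected by $\prod_{p\mid qm}(1-p^{-2})^{-1}$ at the primes treated separately. For $p\nmid qm\sfrak(m-\sfrak)$, the quantity $\calC-\calE$ is linear in $a$ with unit slope mod $p$. Hence among residues $a$ mod $p^2$ with $p^2\nmid a$, exactly one is excluded by $p^2\mid \calC-\calE$, and that one is automatically $p\nmid a$ because $p\nmid km$. This gives the local factor $(1-p^{-2})-p^{-2}=(1-p^{-2})(1-\frac{1}{p^2-1})$. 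For $p\mid \sfrak(m-\sfrak)$ with $p\nmid qm$, the constant term $(km)^m c^{m-\sfrak}$ is a $p$-unit while the $a$-term is divisible by $p$. So $p\nmid \calC-\calE$ always, and only squarefreeness contributes, which explains why those primes are excluded from the second product. At $p\mid m$, condition (2) with $p^2\nmid a^p-a$ must be checked to give density compatible with the first product.

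Finally, make the count rigorous with a standard sieve. Write the indicator of the conditions as a product over primes of conditions mod $p^2$. Handle $p\le Y$ by the Chinese remainder theorem, giving the product of local densities with error $O(\prod_{p\le Y} p^2)$, or more carefully via Möbius inversion over squarefree $d$ with $d^2$-moduli. Bound the tail $p>Y$: the number of $a\le X$ with $p^2\mid a$ is $\ll X/p^2$, and the number with $p^2\mid L(a)$, where $L(a)=\calC-\calE$ is linear with $|L(a)|\ll X$, is $\ll X/p^2+1$ for $p\ll X^{1/2}$.

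The main obstacle is this tail, together with balancing it against the head to reach $O(X^{3/4})$. I would write $\mu^2$-type conditions as $\sum_{d^2\mid\cdot}\mu(d)$, treat the joint condition on the pair $(a,L(a))$ via squarefree $d_1,d_2$ with $d_1^2\mid a$ and $d_2^2\mid L(a)$, and split at $d_1d_2\le X^{1/4}$. The range $d_1d_2\le X^{1/4}$ contributes the main term plus $O(X^{1/4})$. The larger range is bounded by $\sum_{d>X^{1/4}}X/d^2\ll X^{3/4}$, plus a divisor-type bound for the number of large $d$ with $d^2\mid L(a)$. Large $d$ has $d\le |L(a)|^{1/2}\ll X^{1/2}$, so a crude count of pairs gives at most $X^{3/4}$ up to constants depending on $n,m$.
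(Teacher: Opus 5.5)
There is a genuine gap: your count cannot produce the stated main term, because it misidentifies what that term counts. In the paper $\gamma=q\,\rad(m)$, and $\gamma^{-2}$ is the density of a \emph{single residue class} of $a$. The construction runs as follows.
For each $p\mid m$ one fixes a unit $\lambda_p$ with $\lambda_p^{p}\not\equiv\lambda_p\pmod{p^{2}}$. By CRT one takes $\rho$ with $\rho\equiv q\pmod{q^{2}}$, which forces $q\,\|\,a$. One also requires $\rho\equiv\lambda_p\pmod{p^{2}}$, which settles condition (2) of Corollary~\ref{6.1lemma} in advance.
One then counts $a\le X$ with $a\equiv\rho\pmod{\gamma^{2}}$ such that $a$ and the linear form $q^{-jm}(\calC-\calE)=c^{m-\sfrak}m^{m}-\sfrak^{\sfrak}(m-\sfrak)^{m-\sfrak}a$ are both squarefree. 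This count is obtained by quoting the Jones--White theorem (Theorem~\ref{2.3:lemma}), and Corollary~\ref{6.1lemma} then certifies monogeneity.

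So the formula is the size of this subfamily, in effect a lower bound. Your full local-density count, done correctly, has local factors $(q-1)/q^{2}$ at $q$, $1-p^{-2}$ at $p\mid m$ with $p\nmid\sfrak$, and $1-1/p$ at $p\mid\gcd(m,\sfrak)$. That gives the main term
\[
\frac{X}{(q+1)\zeta(2)}\prod_{p\mid\gcd(m,\sfrak)}\frac{p}{p+1}\prod_{p\nmid qm\sfrak(m-\sfrak)}\Bigl(1-\frac{1}{p^{2}-1}\Bigr),
\]
which exceeds the stated one by the factor $(q-1)\prod_{p\mid m,\,p\nmid\sfrak}(p^{2}-1)\prod_{p\mid\gcd(m,\sfrak)}p(p-1)>1$. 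No normalisation $a=\gamma a'$ bridges this; such a substitution would give $X/\gamma$, not $X/\gamma^{2}$. To reach the displayed formula you must impose the fixed class modulo $\gamma^{2}$, as the paper does.

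There are two further errors. First, the conditions of Corollary~\ref{6.1lemma} are imposed only on primes dividing $\D_f$. Since $\calC-\calE=q^{jm}\bigl(c^{m-\sfrak}m^{m}-a\,\sfrak^{\sfrak}(m-\sfrak)^{m-\sfrak}\bigr)$, a prime $p\mid m$ with $p\nmid n$ and $p\nmid a$ does not divide $\D_f$ at all. So nothing forces $p\mid a$, and the constraint $p^{2}\nmid a^{p}-a$ matters only for $p\mid\gcd(m,\sfrak)$.

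Second, your tail bound does not work as written. Estimating each term by $X/(d_1d_2)^{2}+O(1)$ over pairs with $d_1d_2>X^{1/4}$ and $d_1,d_2\ll X^{1/2}$ sums the $O(1)$ remainders over roughly $X$ pairs, which gives $O(X)$, not $O(X^{3/4})$. You must first complete the M\"obius sum in one variable, recovering a squarefree indicator, and truncate each variable separately. This is precisely the work the paper delegates to Jones--White.
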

 Our next result proves that there are infinitely many monogenic polynomials $f(x)$. Part of the proof relies on the $abc$-conjecture for the number field. 
\begin{theorem} \label{2.10:thm}
    Let $n,m$ and $k$ be integers such that $\gcd(km,n)=1$ and $\rad(k)=q$, where $q$ is a fixed prime. Assume $abc$-conjecture for number fields, then there exist infinitely many primes $p$ such that $f(x) = (x^k \pm 1)^m - p x^n$ is monogenic. 
\end{theorem}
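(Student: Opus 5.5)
We take $a=p$ prime and reduce monogeneity of $f(x)=(x^k\pm1)^m-px^n$ to a squarefreeness condition on a single integer, using Theorem~\ref{monogeneity of f(x)} (equivalently Corollary~\ref{Cor:1.3}). With $c=\pm1$, conditions (i)--(iii) never arise. Irreducibility holds whenever $p\nmid m$: then $f(x)$ is $p$-Eisenstein after reversal, or Eisenstein at $p$ in a shifted form. In any case the Newton polygon of $f$ at $p$ has a single slope $n/(km)$ in lowest terms, since $\gcd(km,n)=1$, so $f$ is irreducible over $\mathbb{Q}_p$.

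We then check the conditions prime by prime.
\begin{itemize}
\item For the prime $p=a$, condition (iv)(a) holds automatically when $m\ge2$ because $p^2\nmid a$. When $m=1$ we need (iv)(b), which is a congruence condition on $p$ modulo $q^2$ or so.
\item For primes dividing $k$, we have $\rad(k)=q$, so only $q$ matters. Condition (v) is a congruence condition on $a=p$ modulo a power of $q$. By Dirichlet's theorem on primes in progressions, we can fix a residue class of $p$ making (v) hold, since we can choose $a_1=(p^{q^j}-p)/q$ with prescribed behaviour mod $q$.
\item For primes $r\mid m$ with $r\nmid kp$, condition (vi) requires $r\mid n$. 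This fails because $\gcd(m,n)=1$, so such $r$ must not divide $\D_f$. We arrange this by choosing $p$ in a residue class modulo $\rad(m)$ with $r\nmid(\calC-\calE)$, and checking that $r\nmid a^{k(m-1)}$. Note that $\calC=(km)^{km}$ is divisible by $r$ while $\calE=p^k n^n(km-n)^{km-n}$ is not once $r\nmid p$. Hence $r\nmid \D_f$ automatically and this case is vacuous.
\end{itemize}

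Everything therefore reduces to condition (vii): for all primes $r\nmid pkm$, we need $r^2\nmid G(p)$, where
\[
G(x)=(km)^{km}(\pm1)^{km-n}-x^{k}n^n(km-n)^{km-n}
\]
is a polynomial in $x$ with $t=1$. Thus we must show that $G(p)$ is squarefree away from $qm$ for infinitely many primes $p$ in a fixed arithmetic progression. $G$ is a binomial $A-Bx^k$, which is separable and has no repeated factors over $\mathbb{Q}$ since $AB\ne0$.

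Granville's theorem says the $abc$-conjecture implies squarefree values of separable polynomials at a positive proportion of integers. The main obstacle is the number-field $abc$ extension that restricts these to prime arguments, as used in the results of Jones cited earlier. We would invoke that result in the form: for a separable $G\in\mathbb{Z}[x]$ without a fixed square divisor outside a finite set $S$, there are infinitely many primes $p$ in a given admissible progression with $G(p)$ squarefree outside $S$.

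We must verify the absence of a local obstruction for each $r\notin S$. That is, for every such $r$ there must be a residue $p\not\equiv0 \pmod r$ with $r^2\nmid G(p)$. This holds since $G$ mod $r^2$ has at most $2k<r^2-r$ roots for large $r$, and small $r$ are handled by direct inspection. The residue conditions from steps (iv) and (v) are compatible by the Chinese remainder theorem, since they involve the distinct moduli $q$ and $\rad(m)$. This yields infinitely many primes $p$ with $f$ monogenic.
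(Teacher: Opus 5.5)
\paragraph{The gap: irreducibility.} You need $f$ irreducible, both as part of ``monogenic'' and because Theorem~\ref{monogeneity of f(x)} assumes $f$ is the minimal polynomial of $\theta$. Your argument for it does not work.
\begin{itemize}
\item The $p$-adic Newton polygon is flat. Since $f\equiv(x^k\pm1)^m\pmod p$ has leading coefficient $1$ and constant term $(\pm1)^m$, its Newton polygon is the horizontal segment from $(0,0)$ to $(km,0)$, not a side of slope $n/(km)$.
\item Reversal does not help: $(1\pm x^k)^m-px^{km-n}$ again has unit constant term.
\item Shifting does not help: $f(x+t)\equiv((x+t)^k\pm1)^m\pmod p$ is a power of $x$ only when $p\mid k$, i.e.\ $p=q$.
\end{itemize}
In fact, for $p\nmid k$ the polynomial $x^k\pm1$ is separable mod $p$. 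Hensel's lemma then lifts the coprime factorization $\bar f=\prod_i\bar\varphi_i^{\,m}$ to $\mathbb{Z}_p[x]$. So $f$ is reducible over $\mathbb{Q}_p$ whenever $x^k\pm1$ is reducible mod $p$. That always happens for $x^k-1$, and for $x^k+1$ unless $k=2$ and $p\equiv 3\pmod 4$. No local argument at $p$ can give irreducibility over $\mathbb{Q}$.

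The paper does not really supply this step either. It cites Corollary~\ref{6.1lemma}, whose Eisenstein argument at $q$ needs $q\mid a$, which is false for $a=p\neq q$. One workable repair:
\begin{itemize}
\item $(x^k\pm1)^m-yx^n$ is linear in $y$ with coprime coefficients, hence irreducible in $\mathbb{Q}(y)[x]$.
\item Cohen's quantitative Hilbert irreducibility bound allows only $O(X^{1/2}\log X)$ integers $y\le X$ with reducible specialization, which is $o(\pi(X))$.
\item It then suffices to use Pasten's theorem in its asymptotic form, a positive proportion of primes $p$ with $G(p)$ squarefree, rather than the bare infinitude stated in Theorem~\ref{2.5:cor}.
\end{itemize}

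\paragraph{The rest of the argument.} Otherwise your route is the paper's: reduce to squarefreeness of the binomial $G(p)=\calC-\calE$ (here $t=1$), show $G$ has no local obstruction, and apply Theorem~\ref{2.5:cor}. Checking (iv)--(vi) directly is more accurate than the paper's appeal to Corollary~\ref{6.1lemma}. Your sign $(\pm1)^{km-n}$ also corrects the paper's $F$.

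However, the congruence conditions you impose are unnecessary. Your CRT remark is also unjustified, since $q$ may divide $m$.
\begin{itemize}
\item For $p\neq q$ we have $q\mid\calC$ but $q\nmid\calE=p^kn^n(km-n)^{km-n}$, because $\gcd(km,n)=1$. So $q\nmid\D_f$ and condition (v) never arises.
\item For $m=1$ we have $\D_f=\pm(\calC-\calE)$ with $p\mid\calE$ and $p\nmid\calC$. So condition (iv)(b) never arises either.
\end{itemize}
Hence no arithmetic progression is needed.

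For the local obstruction, your root count only covers large primes $r$ with $r\nmid kAB$. The paper's check handles every $r$ at once. Write $G=A-Bx^k$ with $\gcd(A,B)=1$.
\begin{itemize}
\item If $r\mid kmn(km-n)$, then $r$ divides exactly one of $A$ and $B$, so $r\nmid G(1)$.
\item Otherwise $G(1+r)-G(1)\equiv -Bkr\not\equiv 0\pmod{r^2}$, so $G(1)$ or $G(1+r)$ is nonzero mod $r^2$.
\end{itemize}
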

The next theorem establishes the nonmonogeneity of $f(x)$ and determines its index explicitly in certain special cases.

\begin{theorem}\label{nonmonogeneity}
Let $p$ be an odd prime and let $m >1 ,k \ge 1$ be integers such that $p-1$ divides $(km-k-1)$ and $p \nmid (km-k-1)$. 
Let $a,c \in \mathbb{Z}$ with $a \equiv 1 \pmod{p}$ and $p \mid c$, and let $f(x)=(x^k+c)^m-ax^{k+1}$
be irreducible over  $\mathbb{Q}$ with $mk>k+1$. Let $K=\mathbb{Q}(\theta)$, where $\theta$ is a root of $f(x)$. 
If \[v_p(c)>(k+1)v_p(m) \textit{ and } \gcd\!\left(v_p(c)-v_p(m),\,k\right)=1,\] then $v_p\!\left(i(K)\right)=1.$
\end{theorem}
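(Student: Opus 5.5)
The plan is to determine the factorization of $p\Z_K$ completely using Dedekind's criterion together with Ore's Newton polygon technique, and then read off $v_p(i(K))$ from the known formula for the index in terms of the residue degrees of the primes above $p$ (Ore, Engstrom, Nart).

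\emph{Step 1: reduction modulo $p$.} Since $p\mid c$ and $a\equiv 1 \pmod p$, we have
\[
\overline{f}(x)=x^{km}-x^{k+1}=x^{k+1}\bigl(x^{N}-1\bigr), \qquad N=km-k-1 .
\]
Because $p\nmid N$, the factor $x^N-1$ is separable over $\F_p$. Because $(p-1)\mid N$, it is divisible by $x^{p-1}-1$. Hence every $u\in\F_p^{*}$ is a simple root of $\overline f$. Each such simple linear factor $x-u$ gives, by Dedekind's criterion, an unramified prime of residue degree $1$ not dividing the index. This produces $p-1$ distinct primes of degree $1$. The remaining irreducible factors of $x^N-1$ of higher degree only contribute primes of residue degree $\ge 2$, and these play no role in the final count of degree-one primes.

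\emph{Step 2: the factor $x^{k+1}$.} Here I will use the $x$-adic Newton polygon of $f$ over $\Q_p$. Write
\[
f(x)=\sum_{i=0}^{m}\binom{m}{i}c^{m-i}x^{ki}-ax^{k+1}.
\]
The relevant points, up to abscissa $k+1$, are $(0,\,m v_p(c))$, $(k,\,v_p(m)+(m-1)v_p(c))$ and $(k+1,\,0)$; the last one comes from $a$, which is a unit. The points $(ik,\cdot)$ with $i\ge 2$ lie beyond abscissa $k+1$ and are irrelevant for the principal part. The polygon should have two sides:
\begin{itemize}
\item a side of length $k$ and height $v_p(c)-v_p(m)$;
\item a side of length $1$ and height $v_p(m)+(m-1)v_p(c)$.
\end{itemize}
The hypothesis $v_p(c)>(k+1)v_p(m)$ should be exactly what guarantees that the first slope is strictly smaller than the second, so that the point at abscissa $k$ is a genuine vertex and the polygon is convex with these two sides. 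Checking this inequality carefully is the main computational point.

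On the first side, $\gcd(v_p(c)-v_p(m),k)=1$ makes the side have degree $1$. Its residual polynomial is therefore linear, and the side yields a single prime with ramification index $k$ and residue degree $1$. The second side has length $1$ and gives one more unramified prime of residue degree $1$. Thus $f$ is $p$-regular at the factor $x$, and in total $p$ has exactly $p+1$ primes of residue degree $1$.

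\emph{Step 3: conclude.} Since the number of primes of residue degree $1$ exceeds the number $p$ of monic degree-one polynomials over $\F_p$, Dedekind/Hensel's criterion gives $p\mid i(K)$. For the exact valuation I will invoke Ore's formula (equivalently Engstrom's tables, or the general lemma found in the literature) for the case where the excess among degree-one primes is exactly one and no other residue degree is over-represented. That case yields $v_p(i(K))=1$.

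The main obstacle I expect is this last step: justifying that the primes of degree $\ge 2$, which arise from the factors of $x^N-1$ of higher degree, do not add to $v_p(i(K))$. One must check that for each $f\ge 2$ the number of primes of residue degree $f$ is at most the number of monic irreducible polynomials of degree $f$ over $\F_p$. I would argue this directly: those primes correspond bijectively to distinct irreducible factors of the separable polynomial $x^N-1$, so there can never be too many of them in any given degree. The only excess is therefore the single extra degree-one prime, giving $v_p(i(K))=1$.
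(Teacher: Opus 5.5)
\textbf{There is a genuine gap, and it is shared with the paper.} Your outline matches the paper's proof essentially verbatim. The paper also asserts a two-sided $x$-polygon with slopes $v_p(m)+(m-1)v_p(c)$ and $(v_p(c)-v_p(m))/k$, counts $p+1$ primes of degree one, and concludes with Hensel's criterion and Engstrom. The step you deferred as ``the main computational point'' is false, both for you and for the paper.

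Put $v=v_p(c)\ge 1$ and $\mu=v_p(m)$. Convexity would require $(v-\mu)/k>\mu+(m-1)v$. But the left side is at most $v$, and the right side is at least $(m-1)v\ge v$. Equivalently, the segment from $(0,mv)$ to $(k+1,0)$ has height $mv/(k+1)$ above $x=k$, and $(m-1)v>mv/(k+1)$ is just a restatement of $km>k+1$. So $(k,\mu+(m-1)v)$ is never a vertex, whatever $v_p(c)$ and $v_p(m)$ are. The hypothesis $v_p(c)>(k+1)v_p(m)$ is what one gets by comparing $(v-\mu)/k$ with $\mu$ alone, dropping the term $(m-1)v$.

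\textbf{The correct polygon, and why the conclusion can fail.} The true $N_x^{+}(f)$ is a single side of length $k+1$ and height $mv$. Its only points $(i,v_p(a_i))$ are its endpoints, so its degree is $g=\gcd(mv,k+1)$ and its residual polynomial is $\bar u-y^{g}$ with $\bar u=\overline{(c/p^{v})^{m}}$. (When $k=1$, the condition $m\ge 3$ makes the $x^2$-coefficient $\equiv -1$.)

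The hypothesis $(p-1)\mid k(m-1)-1$ forces $k$ odd and $m$ even. It also forces $d=\gcd(g,p-1)$ to divide $m$: if $q^e$ exactly divides $d$, then $k\equiv -1$ and $k(m-1)\equiv 1 \pmod{q^e}$. Hence $\bar u$ is a $g$-th power, and $y^g=\bar u$ has exactly $d\ge 2$ roots in $\F_p$. For $p\nmid g$ this yields $p-1+d$ primes of degree one. Two degree-one components with equal residues contribute at least $1$ to $v_p(\mathrm{ind}(\alpha))$, so $v_p(i(K))\ge d-1$.

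This does occur. Take $p=5$, $k=3$, $m=4$, $c=5$, $a=6$.
\begin{itemize}
\item The polynomial $f=(x^3+5)^4-6x^4$ is $3$-Eisenstein after $x\mapsto x+1$, so it is irreducible, and all hypotheses of the statement hold.
\item Over $\F_5$, $\bar f=x^4(x-1)(x-2)(x-3)(x-4)(x^2+2)(x^2+3)$.
\item $N_x^{+}(f)$ is one side of slope $-1$ with residual polynomial $1-y^4$, which has four distinct roots.
\end{itemize}
Thus $5\Z_K$ is a product of eight unramified primes of degree one and two of degree two. Eight degree-one components in five residue classes force three colliding pairs, and $3$ is attained, so $v_5(i(K))=3$, not $1$.

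So Step 2 cannot be repaired to give the stated conclusion. Extra hypotheses would be needed before a count like your Step 3 applies: conditions on $g$ (for example $p\nmid g$ and $\gcd(g,p-1)=2$), and control of the higher-degree factors.
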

As an application of Theorem~\ref{monogeneity of f(x)}, we obtain the following result regarding the solvability of a certain class of linear differential equations.
    \begin{theorem}{\label{t2}}
    Let \begin{equation}{\label{diff}}
        \left(\frac{d^k }{dx^k}+c\right)^m\!y-a\frac{d^ny }{dx^n}=0
    \end{equation}
    be a differential equation, where $km>n\geq 1$. Let $\mathcal{F}(z) =  (z^k+c)^m-az^n$ be the irreducible auxiliary equation of \eqref{diff}
  with a root $\theta$. Suppose $F$ is such that $K=\Q(\theta)$ is the splitting field of $F$. If each prime $p$ dividing the discriminant $\D_{\mathcal{F}}$
 of $\mathcal{F}(z)$ satisfies any one of the conditions \textup{(i)} to \textup{(vii)} of Theorem \ref{monogeneity of f(x)}, then the general
 solution of the given differential equation \eqref{diff} is of the form
\begin{align*}
    y(x) = \sum_{i=1}^{km} \alpha_i\prod_{j=1}^{km}e^{c_{j-1}^{(i)} \theta^{j-1}x},
\end{align*}
where $c_{j-1}^{(i)}$ are integers and $\alpha_i$ are arbitrary real constants for all $1\leq i,j \leq km$.
\end{theorem}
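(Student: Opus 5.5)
The plan is to reduce the solution of \eqref{diff} to the arithmetic of $\mathcal{F}(z)$. First I would record the standard fact from linear ODE theory: since $\mathcal{F}(z)=(z^k+c)^m-az^n$ is irreducible over $\Q$, it is separable. Hence its $km$ roots $\theta_1=\theta,\theta_2,\dots,\theta_{km}$ are pairwise distinct. Expanding the operator $\left(\frac{d^k}{dx^k}+c\right)^m-a\frac{d^n}{dx^n}$ as a constant-coefficient operator $\mathcal{F}(D)$ with $D=\frac{d}{dx}$ gives $\mathcal{F}(D)e^{\theta_i x}=\mathcal{F}(\theta_i)e^{\theta_i x}=0$. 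The functions $e^{\theta_i x}$ are linearly independent because the $\theta_i$ are distinct (Vandermonde/Wronskian argument), and the solution space has dimension $km$. Therefore the general solution is $y(x)=\sum_{i=1}^{km}\alpha_i e^{\theta_i x}$.

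Next I would invoke Corollary \ref{Cor:1.3}. By hypothesis every prime $p\mid \D_{\mathcal{F}}$ satisfies one of (i)--(vii) of Theorem \ref{monogeneity of f(x)}, so no prime divides $[\Z_K:\Z[\theta]]$. By \eqref{eq:disc-relation}, only primes dividing $\D_{\mathcal F}$ can divide the index, so $\Z_K=\Z[\theta]$. Since $K=\Q(\theta)$ is assumed to be the splitting field, hence normal, each conjugate $\theta_i$ lies in $K$. Being an algebraic integer, $\theta_i\in\Z_K=\Z[\theta]$, so $\theta_i=\sum_{j=1}^{km}c^{(i)}_{j-1}\theta^{j-1}$ with $c^{(i)}_{j-1}\in\Z$ uniquely determined by the power integral basis $\{1,\theta,\dots,\theta^{km-1}\}$.

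Finally I would substitute this expansion and use $e^{u+v}=e^ue^v$ to get $e^{\theta_i x}=\prod_{j=1}^{km}e^{c^{(i)}_{j-1}\theta^{j-1}x}$, which yields the stated form. I do not expect a genuine obstacle. The only points needing care are, first, noting that the normality of $K$ (the splitting-field hypothesis) is exactly what places every root in $\Z[\theta]$. Second, the coefficients $\alpha_i$ should be taken as arbitrary constants (complex in general). If real solutions are desired, one pairs conjugate terms, so I would remark on this rather than change the statement.
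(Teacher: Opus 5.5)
Your proposal is correct and follows essentially the same route as the paper: you use Theorem~\ref{monogeneity of f(x)} together with $\D_{\mathcal F}=[\Z_K:\Z[\theta]]^2 d_K$ to get $\Z_K=\Z[\theta]$, then use that $K$ is the splitting field to place every root of $\mathcal F$ in $\Z[\theta]$, then factor $e^{\theta_i x}$ as a product of exponentials. You also make explicit what the paper leaves implicit, namely the separability and linear-independence step giving $y=\sum_i \alpha_i e^{\theta_i x}$ and the role of normality. Your remark that the $\alpha_i$ should in general be complex rather than real is a fair correction to the statement's wording.
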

The following theorem gives a criterion under which the Galois group of $f(x)$ over $\mathbb{Q}$ is the full symmetric group $S_n$.

\begin{theorem} \label{galois group}
Let $f(x) = (x^k+c)^m - a x^n \in \Z[x]$ be an irreducible polynomial of degree $d = km$. Assume that the following conditions hold:
\begin{enumerate}
    \item The integer $q = km-n$ is a prime number that satisfies $\frac{d}{2} < q < d-2$.
    \item There exists a prime $p$ such that $p \mid c$, $p \mid a$, and $p^2 \nmid a$.
    \item The exponents satisfy the fact that $k(m-1)$ is an odd integer.
    \item There exists a prime $\ell$ such that $v_\ell(a)$ is odd and $\ell \nmid kmc$.
\end{enumerate}
Then the Galois group of $f(x)$ over $\Q$ is the symmetric group $S_{d}$.
\end{theorem}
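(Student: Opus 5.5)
We follow the classical route: a transitive subgroup of $S_d$ that contains a $q$-cycle with $q$ a prime, $d/2<q<d-2$, together with a transposition, is all of $S_d$. Transitivity holds because $f(x)$ is irreducible, so we must exhibit a $q$-cycle and a transposition in $G=\mathrm{Gal}(f/\Q)$. We obtain both from inertia groups at suitable primes, read off from Newton polygons.

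\emph{Step 1: the $q$-cycle, from the prime $p$ of hypothesis (2).} Since $p\mid c$ and $p\,\|\,a$, we have
\[
f(x)\equiv x^{km}-ax^n \pmod{p^{v_p(c)}},
\]
and modulo $p$ this is $x^{km}$. The coefficient of $x^n$ is $-a$, with $p$-adic valuation $1$; the lower coefficients are divisible by $c$; and the constant term is $c^m$. Working over $\Q_p$, we factor $f=f_1f_2$, where $f_1$ collects the roots of valuation $0$ and $f_2$ the roots of positive valuation. The edge of the Newton polygon from $(n,1)$ to $(km,0)$ has horizontal length $km-n=q$ and height $1$, so its slope is $1/q$ in absolute value. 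Hence $f$ has a $p$-adic factor of degree $q$ whose roots all have valuation $1/q$, and this factor is Eisenstein-like and irreducible, totally ramified of degree $q$. One must check that this segment really is an edge of the lower convex hull. The points to the left of $x^n$ coming from $c$ have valuation at least $1$, and the constant term has valuation $m\,v_p(c)\ge m$. So the part of the polygon to the left of $(n,1)$ has slopes $\ge 0$, while the final edge from $(n,1)$ to $(km,0)$ is the unique edge of height $1$; if needed we write the argument with valuations measured exactly. The remaining $n$ roots form a separate $\Q_p$-factor. Its degree is $n<q$, because $q>d/2$ gives $n=d-q<q$. A generator of tame inertia acts on the $q$ roots of the degree-$q$ factor as a $q$-cycle, since $p\neq q$ is forced or else wild inertia still contains an element of order $q$. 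On the other $n<q$ roots it acts with order prime to $q$, because every cycle there has length at most $n<q$. A suitable power of this inertia element is therefore a $q$-cycle in $G$.

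\emph{Step 2: the transposition, from the prime $\ell$ of hypothesis (4).} Since $\ell\nmid kmc$ and $v_\ell(a)=e$ is odd, modulo $\ell$ we have $f\equiv (x^k+c)^m$ up to the term $ax^n$. We look at $f(x)$ near roots of $x^k+c$ over $\Q_\ell^{\mathrm{ur}}$. The polynomial $x^k+c$ is separable mod $\ell$ because $\ell\nmid kc$. Near each root $\beta$ of $x^k+c$, $f$ becomes locally
\[
(\text{unit})\,(x-\beta)^m - a\beta^n.
\]
Its roots satisfy $(x-\beta)^m=a\cdot(\text{unit})$, so they have valuation $e/m$ relative to $\beta$. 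With $\ell\nmid m$, this is tamely ramified with inertia acting on each cluster of $m$ roots through $\mu_m$, by multiplication by $\zeta^{e}$ for a primitive root of unity of order $m/\gcd(m,e)$. This gives only products of cycles, not obviously a transposition. So the better use of hypotheses (3) and (4) is probably the discriminant. By Theorem~\ref{disc of f(x)}, $v_\ell(\D_f)$ is $k(m-1)\,e$ plus the contributions of $c^{m(n-1)}$ and $(\calC-\calE)^t$. These vanish modulo parity: $\ell\nmid c$, and $\ell\nmid\calC$ forces $\ell\nmid \calC-\calE$. Hence $v_\ell(\D_f)=k(m-1)e$ is odd, using hypothesis (3). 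Therefore $\D_f$ is not a square and $G\not\subseteq A_d$.

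\emph{Step 3: conclusion.} By Jordan's theorem, a primitive group containing a $q$-cycle with $q\le d-3$ contains $A_d$. Primitivity holds because $G$ is transitive and contains a $q$-cycle with $q>d/2$: a block system with blocks of size $b$, where $1<b<d$, would force the $q$-cycle to fix blocks, which is impossible. Then $G\supseteq A_d$, and since $\D_f$ is not a square, $G=S_d$. This bypasses the transposition entirely.

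The main obstacle is Step 1. We must verify carefully that the Newton polygon yields an irreducible degree-$q$ factor over $\Q_p$ with tame, cyclic action of order $q$. This includes the possibility $p=q$, where we would instead use that the degree-$q$ totally ramified extension has Galois closure with inertia of order divisible by $q$, which still gives an element of order $q$ acting as a $q$-cycle on those roots and trivially in $q$-power order on the rest. The discriminant parity computation in Step 2 is routine once Theorem~\ref{disc of f(x)} is invoked.
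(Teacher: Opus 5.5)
Your proposal is correct and is essentially the paper's proof. The paper cites Hajir's criterion: an edge of a $p$-adic Newton polygon whose slope has denominator divisible by a prime $q\in(d/2,d-2)$ forces $G\supseteq A_d$. Your Steps 1 and 3 reprove this criterion via a $q$-cycle, primitivity and Jordan's theorem, and the paper excludes $A_d$ by the same parity count $v_\ell(\D_f)=k(m-1)v_\ell(a)$.

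Two small repairs are needed.

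\emph{The Newton polygon edge.} The edge from $(n,1)$ to $(km,0)$ does not follow from ``valuation $\ge 1$ to the left''; a point $(j,1)$ with $j<n$ would destroy it. It follows instead from $v_p\bigl(\binom{m}{i}c^{m-i}\bigr)\ge m-i\ge k(m-i)/q$ for $ik<n$, which uses $q>km/2\ge k$ (note $m\ge 2$ by (3)).

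\emph{The $q$-cycle.} The tame/wild case split is unnecessary, and $p\neq q$ is not in fact forced by the hypotheses. Since $[\Q_p(\alpha):\Q_p]=q$, you already have $q\mid |G|$, and because $2q>d$, every element of order $q$ in $S_d$ is a single $q$-cycle.
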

\section{Discriminant of \texorpdfstring{$f(x)$}{f(x)}}
Now we state two lemmas that will be useful in the proof of Theorem~\ref{disc of f(x)}. 
Let $\theta$ be a root of an irreducible monic polynomial $f(x) \in \mathbb{Z}[x]$. 
The discriminant of $f(x)$ and the norm of $f'(\theta)$ are related by the following formula (see~\cite[Lemma~2.6]{SKM}).

\begin{lemma}\label{2.3} Let $f(x)\in\Z[x]$ be monic and irreducible polynomial with $\deg(f)=n.$ Let $f(\theta)=0$ and $K=\Q(\theta).$ Then
	$$ D=(-1)^{\binom{n}{2}}\,\mathcal{N}_{K/\Q} (f'(\theta)).$$
\end{lemma}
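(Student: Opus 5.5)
The plan is to prove the identity directly from the definition of the discriminant as a product of squared root differences, and to identify the norm with a product over the conjugates of $\theta$.

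First, I fix the setup. Since $f(x)$ is monic and irreducible over $\Q$, it is separable. So it has $n$ distinct roots $\theta=\theta_1,\theta_2,\dots,\theta_n$ in $\mathbb{C}$, and we can write $f(x)=\prod_{i=1}^n(x-\theta_i)$. The field $K=\Q(\theta)$ has exactly $n$ embeddings $\sigma_1,\dots,\sigma_n:K\to\mathbb{C}$, determined by $\sigma_i(\theta)=\theta_i$.

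Next, I compute the norm. By definition, $\mathcal{N}_{K/\Q}(f'(\theta))=\prod_{i=1}^n\sigma_i(f'(\theta))$. Because $f'(x)$ has rational (indeed integer) coefficients, each embedding fixes them, so $\sigma_i(f'(\theta))=f'(\theta_i)$. Differentiating the factorization of $f$ and evaluating at $\theta_i$ kills every term but one, which gives $f'(\theta_i)=\prod_{j\neq i}(\theta_i-\theta_j)$. Hence
\[
\mathcal{N}_{K/\Q}(f'(\theta))=\prod_{i=1}^n\prod_{j\neq i}(\theta_i-\theta_j)=\prod_{i\neq j}(\theta_i-\theta_j).
\]

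Then I compare this with $D=\prod_{i<j}(\theta_i-\theta_j)^2$. In the product over ordered pairs $i\neq j$, each unordered pair $\{i,j\}$ with $i<j$ contributes
\[
(\theta_i-\theta_j)(\theta_j-\theta_i)=-(\theta_i-\theta_j)^2 .
\]
There are $\binom{n}{2}$ such pairs, so $\prod_{i\neq j}(\theta_i-\theta_j)=(-1)^{\binom{n}{2}}D$. Multiplying both sides by $(-1)^{\binom{n}{2}}$ gives the claimed formula.

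There is no genuine obstacle here. The only points needing care are that irreducibility guarantees distinct roots and a full set of $n$ embeddings, and the sign bookkeeping when passing from ordered to unordered pairs.
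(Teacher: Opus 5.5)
Your argument is correct and is the standard proof. You identify $\mathcal{N}_{K/\Q}(f'(\theta))$ with $\prod_i f'(\theta_i)=\prod_{i\neq j}(\theta_i-\theta_j)$, then pair each ordered pair with its reverse to get the sign $(-1)^{\binom{n}{2}}$. The paper gives no proof of its own and simply cites the lemma from a textbook reference, so your self-contained argument fills that in.
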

Let $K$ be a finite extension of a field $F$. For any element $\alpha \in K$, computing the norm of $\alpha$ with respect to $K/F$ is closely related to computing the norm of $\alpha$ with respect to $F(\alpha)/F$, as given by the following formula (see~\cite[Theorem~1.20]{SKM}).

\begin{lemma}\label {2.4}Let $K/F$ be an extension of degree $n$ and $\alpha$  be an element of $K$ with $[F(\alpha):F]=d$. Then $$ \mathcal{N}_{K/F}(\alpha)=\left( \mathcal{N}_{F(\alpha)/F}(\alpha)\right)^{n/d}. $$
\end{lemma}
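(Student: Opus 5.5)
We will prove Lemma~\ref{2.4} by working with the definition of the norm as the determinant of the $F$-linear map ``multiplication by $\alpha$''. That is, $\mathcal{N}_{K/F}(\alpha)=\det(m_\alpha)$, where $m_\alpha\colon K\to K$ is given by $x\mapsto \alpha x$. Set $L=F(\alpha)$ and $e=[K:L]$. The tower law gives $n=[K:F]=[K:L][L:F]=ed$, so $e=n/d$.

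The first step is to choose a convenient $F$-basis of $K$. Let $p(x)=x^d+b_{d-1}x^{d-1}+\cdots+b_0\in F[x]$ be the minimal polynomial of $\alpha$ over $F$. Then $1,\alpha,\dots,\alpha^{d-1}$ is an $F$-basis of $L$. Let $\beta_1,\dots,\beta_e$ be an $L$-basis of $K$. The standard tower-law argument shows that the products $\alpha^i\beta_j$, with $0\le i\le d-1$ and $1\le j\le e$, form an $F$-basis of $K$. We order this basis block by block in $j$.

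The second step is to compute the matrix of $m_\alpha$ in this basis. For each fixed $j$, the span $L\beta_j=\operatorname{span}_F\{\alpha^i\beta_j\}$ is stable under $m_\alpha$. On this block, $m_\alpha$ sends $\alpha^i\beta_j\mapsto \alpha^{i+1}\beta_j$ for $i<d-1$. It sends $\alpha^{d-1}\beta_j\mapsto \alpha^d\beta_j=-\sum_i b_i\alpha^i\beta_j$. So the matrix of $m_\alpha$ is block diagonal with $e$ identical blocks, each equal to the companion matrix $C_p$ of $p(x)$. Hence
\[
\mathcal{N}_{K/F}(\alpha)=\det(m_\alpha)=\det(C_p)^{e}.
\]

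The last step is to identify $\det(C_p)$ with $\mathcal{N}_{L/F}(\alpha)$. Applying the same definition to $L$ with the basis $1,\alpha,\dots,\alpha^{d-1}$, the matrix of multiplication by $\alpha$ on $L$ is exactly $C_p$. Therefore $\mathcal{N}_{L/F}(\alpha)=\det(C_p)$, and combining this with the previous display gives $\mathcal{N}_{K/F}(\alpha)=\mathcal{N}_{F(\alpha)/F}(\alpha)^{n/d}$.

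If instead the norm is defined as the product of the conjugates $\sigma(\alpha)$ over the $F$-embeddings $\sigma\colon K\to \overline{F}$, two extra points are needed. First, $\det(C_p)=(-1)^d b_0$, which equals the product of the roots of $p$ counted with multiplicity. Second, each $F$-embedding of $L$ extends to exactly $e$ embeddings of $K$ in the separable case (here $F=\mathbb{Q}$). With these, the identity follows in the same way. The main obstacle, such as it is, is this bookkeeping between the two definitions of the norm; using the determinant definition avoids it entirely.
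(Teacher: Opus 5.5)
Your proof is correct and complete: the $F$-basis $\{\alpha^i\beta_j\}$ makes multiplication by $\alpha$ block diagonal with $n/d$ copies of the companion matrix $C_p$, so $\det(m_\alpha)=\det(C_p)^{n/d}=\mathcal{N}_{F(\alpha)/F}(\alpha)^{n/d}$. The paper gives no proof of its own here --- it simply quotes the lemma from \cite[Theorem~1.20]{SKM} --- and your computation is the standard textbook argument for it; it even gives the stronger fact that the characteristic polynomial of $m_\alpha$ on $K$ is $p(x)^{n/d}$, and unlike your embedding-based variant it needs no separability hypothesis.
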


\begin{proof}[Proof of Theorem~\ref{disc of f(x)}]
Let $\theta$ be a root of $f(x)=(x^k+c)^m-ax^n$ and set $K=\mathbb{Q}(\theta)$. 
Since $f(\theta)=0$, we have
\begin{equation}\label{eq:1}
(\theta^k + c)^m = a\,\theta^n.
\end{equation}

Differentiating $f(x)$ gives
\[
f'(x)=km\,x^{k-1}(x^k+c)^{m-1}-an\,x^{n-1}.
\]
Multiplying by $\theta$ and using \eqref{eq:1}, we obtain
\begin{align}
\theta f'(\theta)
&=km\,\theta^k(\theta^k+c)^{m-1}-an\,\theta^n \nonumber\\
&=(\theta^k+c)^{m-1}\bigl(km\theta^k-n(\theta^k+c)\bigr)\nonumber\\
&=(\theta^k+c)^{m-1}\bigl((km-n)\theta^k-nc\bigr).
\label{eq:2}
\end{align}

Let $\mathcal{N}$ denote the norm $\mathcal{N}_{K/\mathbb{Q}}$. 
Since the constant term of $f(x)$ is $c^m$, it follows that, $\mathcal{N}(\theta)=(-1)^{km}c^m.$ Taking norms in \eqref{eq:2}, we obtain
\begin{equation}\label{eq:3}
(-1)^{km}c^m\,\mathcal{N}(f'(\theta))
=
\mathcal{N}(\theta^k+c)^{m-1}
\mathcal{N}\!\bigl((km-n)\theta^k-nc\bigr).
\end{equation}
We first compute $\mathcal{N}\!\bigl((km-n)\theta^k - nc\bigr)$. Set $z=(km-n)\theta^k-nc$ and $\theta^k=\frac{z+nc}{km-n}.$ Write $t=\gcd(n,k)$ and express $n=n_1t$, $k=k_1t$. Raising \eqref{eq:1} to the $k_1$-th power gives
\[
(\theta^k+c)^{k_1m}=a^{k_1}(\theta^k)^{n_1}.
\]
Substituting $\theta^k=\dfrac{z+nc}{km-n}$ and simplifying yields
\[
(z+kmc)^{k_1m}
=
a^{k_1}(z+nc)^{n_1}(km-n)^{k_1m-n_1}.
\]
Thus $z$ is a root of
\begin{equation}\label{eq:h}
h(x)
=
(x+kmc)^{k_1m}
-
a^{k_1}(x+nc)^{n_1}(km-n)^{k_1m-n_1}
\in\mathbb{Z}[x].
\end{equation}

We claim that $h(x)$ is the minimal polynomial of $z$. 
First observe that $\mathbb{Q}(z)=\mathbb{Q}(\theta^k)$. 
Since $k=k_1t$, we have $\theta^k=(\theta^t)^{k_1}$, so 
$\mathbb{Q}(\theta^k)\subseteq\mathbb{Q}(\theta^t)$. 
Conversely, from \eqref{eq:1} we see that $\theta^n\in\mathbb{Q}(\theta^k)$, 
and since $\gcd(n,k)=t$, there exist integers $v_1,v_2$ with 
$nv_1+kv_2=t$. Hence $\theta^t=(\theta^n)^{v_1}(\theta^k)^{v_2}\in\mathbb{Q}(\theta^k),$ so $\mathbb{Q}(\theta^t)=\mathbb{Q}(\theta^k)$. Now consider $g(x)=(x^{k_1}+c)^m-ax^{n_1}\in\mathbb{Z}[x].$ Then
\[
g(\theta^t)=((\theta^t)^{k_1}+c)^m-a(\theta^t)^{n_1}
=f(\theta)=0.
\]
If $g(x)$ were reducible over $\mathbb{Z}$, say 
$g(x)=g_1(x)g_2(x)$, then
$f(x)=g(x^t)=g_1(x^t)g_2(x^t)$ would also be reducible, contradicting the irreducibility of $f(x)$. Thus $g(x)$ is irreducible and $[\mathbb{Q}(\theta^t):\mathbb{Q}]=k_1m.$ Therefore $h(x)$ is the minimal polynomial of $z$, and
\[
\mathcal{N}_{\mathbb{Q}(z)/\mathbb{Q}}(z)
=
(-1)^{k_1m}h(0),
\]
where
\[
h(0)
=
(kmc)^{k_1m}
-
a^{k_1}(nc)^{n_1}(km-n)^{k_1m-n_1}.
\]
Since $[\mathbb{Q}(\theta):\mathbb{Q}]=km$ and 
$[\mathbb{Q}(z):\mathbb{Q}]=k_1m$, we obtain
\begin{equation}\label{eq:3.6}
\mathcal{N}\bigl((km-n)\theta^k-nc\bigr)
=
\bigl((-1)^{k_1m}h(0)\bigr)^t.
\end{equation}
It remains to compute $\mathcal{N}(\theta^k+c)$. Set $\gamma=\theta^k+c$. Arguing as above, the minimal polynomial of $\gamma$ over $\mathbb{Q}$ is
\[
h_2(x)=x^{k_1m}-a^{k_1}(x-c)^{n_1}.
\]
Hence
\begin{equation}\label{eq:3.7}
\mathcal{N}(\theta^k+c)
=
(-1)^{km+n+t}a^{\,k}c^{\,n}.
\end{equation}

Finally, the result follows from Lemma~\ref{2.3}, together with 
\eqref{eq:3}, \eqref{eq:3.6}, and \eqref{eq:3.7}.
\end{proof}

\section{Monogeneity of \texorpdfstring{$f(x)$}{f(x)}}\label{sec:4}
The following lemma is an easy consequence of the Binomial theorem and will be used in the proof of Theorem~\ref{monogeneity of f(x)}.
\begin{lemma}\label{binomial expansion} { Let $k\geq 1$ be the highest power of a prime $p$ dividing a number $n=p^{k}s'$ and $c$ be an integer not divisible by $p$. If $\bar{g}_1(x)\cdots \bar{g}_r(x)$  is the factorization of $x^{s'}-\bar{c}$ into  a product of distinct irreducible polynomials over $\Z/p\Z$ with $g_i(x)\in\mathbb Z[x] $ monic, then 
		$$x^n-c=(g_1(x)\cdots g_r(x)+pH_1(x))^{p^k}+pg_1(x)\cdots g_r(x)H_2(x)+p^2H_3(x)+c^{p^k}-c$$ 
		\noindent for some polynomials  $H_1(x), H_2(x), H_3(x)\in \mathbb Z[x]$.}
\end{lemma}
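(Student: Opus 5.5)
We argue in two stages: first lift the factorization of $x^{s'}-\bar c$ to a congruence for $x^{s'}-c$, and then raise that congruence to the $p^k$-th power. Because $\bar g_1\cdots\bar g_r$ is the factorization of $x^{s'}-\bar c$ over $\Z/p\Z$, the integral polynomials agree modulo $p$. So we may write
\[
x^{s'}-c=g_1(x)\cdots g_r(x)+pH_1(x)
\]
for some $H_1(x)\in\Z[x]$. Both sides are monic of degree $s'$, so $\deg H_1<s'$, although we will not need this. Set $G(x)=g_1(x)\cdots g_r(x)$.

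Next we relate $x^n-c$ to $(x^{s'}-c)^{p^k}$. We expand $(x^{s'}-c)^{p^k}$ by the binomial theorem, writing $y=x^{s'}$:
\[
(y-c)^{p^k}=y^{p^k}+\sum_{i=1}^{p^k-1}\binom{p^k}{i}y^{p^k-i}(-c)^i+(-c)^{p^k}.
\]
Here $y^{p^k}=x^n$. The sign of the last term is handled by writing the identity with $y+(-c)$ throughout, or by noting $(-1)^{p^k}=-1$ for odd $p$; for $p=2$ the discrepancy $2c^{2^k}$ is absorbed below. This rearranges to
\[
x^n-c=(x^{s'}-c)^{p^k}-\sum_{i=1}^{p^k-1}\binom{p^k}{i}x^{s'(p^k-i)}(-c)^i+\bigl(c^{p^k}-c\bigr)+\epsilon,
\]
where $\epsilon\in\{0,\,2c^{2^k}\}$ is a multiple of $p^2$ whenever it is nonzero, since $k\ge1$ forces $4\mid 2^{k+1}$. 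Every middle binomial coefficient is divisible by $p$. We write the middle sum as $p\,M(x)$ with
\[
M(x)=\sum_{i=1}^{p^k-1}\tfrac{1}{p}\binom{p^k}{i}\,x^{s'(p^k-i)}(-c)^i .
\]

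The main step is to show that $M(x)$ has the form $G(x)H_2(x)+pH_3'(x)$, which amounts to showing $G$ divides $M$ modulo $p$. Modulo $p$ we have $x^{s'}\equiv c$ modulo $G$, since $G\equiv x^{s'}-c$ and the factors are distinct. Hence
\[
M(x)\equiv\sum_{i=1}^{p^k-1}\tfrac1p\binom{p^k}{i}c^{p^k-i}(-c)^i=\frac{(c-c)^{p^k}-c^{p^k}-(-c)^{p^k}}{p}\pmod{(p,G)}.
\]
This equals $0$ for odd $p$. For $p=2$ it equals $-c^{2^k}$ times $2/2=1$, which is exactly compensated by the $\epsilon$ bookkeeping; alternatively one treats $p=2$ by expanding $(y+(-c))^{p^k}$ consistently.

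Therefore $M(x)=G(x)H_2(x)+pH_3'(x)$, where $H_2$ is obtained by division by the monic polynomial $G$ over $\Z$ and the remainder is reduced modulo $p$. Substituting $x^{s'}-c=G+pH_1$ and absorbing $\epsilon$ and $p^2H_3'$ into $p^2H_3$ (with signs moved into $H_2$ and $H_3$) gives the stated identity.

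The only delicate point is the congruence $M\equiv0 \pmod{(p,G)}$ together with the sign issue at $p=2$. Everything else is routine binomial bookkeeping.
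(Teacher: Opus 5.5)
For odd $p$ your argument is complete. It is presumably the binomial-theorem computation the paper has in mind; the paper gives no proof, only calling the lemma an easy consequence of the binomial theorem. You get $x^n-c=(x^{s'}-c)^{p^k}-pM(x)+c^{p^k}-c$. Reducing $M$ modulo $\langle p,G\rangle$ via $x^{s'}-c=G+pH_1$ leaves the integer $\frac1p\bigl[(c-c)^{p^k}-c^{p^k}-(-c)^{p^k}\bigr]=0$.

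The case $p=2$, however, is wrong as written. It is not vacuous: the lemma is invoked in cases (iii) and (iv)(b) of Theorem~\ref{monogeneity of f(x)} for arbitrary primes dividing $\D_f$. With your rearrangement one gets $\epsilon=-2c^{2^k}$. Since $2\nmid c$, this has $v_2(\epsilon)=1$ exactly, so your claim that $\epsilon$ is a multiple of $p^2=4$ ``since $k\ge1$ forces $4\mid 2^{k+1}$'' is false. Hence $\epsilon$ cannot be absorbed into $p^2H_3$, and your final step then fails. You computed yourself that $M\equiv -c^{2^k}\equiv 1$ modulo $\langle 2,G\rangle$. But $1\notin\langle 2,G\rangle$, since $\Z[x]/\langle 2,G\rangle\cong\F_2[x]/(\bar G)$ with $\deg\bar G=s'\ge 1$. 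So $M$ is not of the form $GH_2+2H_3'$. Your two remarks about $p=2$ contradict each other. The ``compensation'' only works if $\epsilon/p$ is merged into the $p$-multiple term: $-2M+\epsilon=-2\bigl(M+c^{2^k}\bigr)$ with $M+c^{2^k}\in\langle 2,G\rangle$.

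A cleaner repair needs neither a parity split nor reduction modulo $\langle p,G\rangle$. Put $N(y)=\frac1p\bigl[(y-c)^{p^k}-y^{p^k}+c^{p^k}\bigr]$. It lies in $\Z[y]$ for every prime $p$: the middle binomial coefficients are divisible by $p$, and the constant term $(-c)^{p^k}+c^{p^k}$ is $0$ or $2c^{2^k}$. Also $N(c)=0$, so $N(y)=(y-c)Q(y)$ with $Q\in\Z[y]$ because $y-c$ is monic. Taking $y=x^{s'}$,
\[
x^n-c=(x^{s'}-c)^{p^k}-p\,(x^{s'}-c)\,Q(x^{s'})+c^{p^k}-c.
\]
Substituting $x^{s'}-c=G+pH_1$ gives the lemma with $H_2=-Q(x^{s'})$ and $H_3=-H_1\,Q(x^{s'})$. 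This also shows the identity uses neither $p\nmid c$ nor the distinctness of the $\bar g_i$; those hypotheses matter only where the lemma is applied.
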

The following well known theorem will be used in the sequel. The equivalence of
assertions (i) and (ii) of the theorem was proved by Dedekind \cite[Theorem 6.1.4]{CH1993} and the equivalence of (i) and (iii) can be easily deduced by applying Uchida’s result \cite{Uch77}. 
\begin{lemma}\label{dedekind}
	Let  $f(x) \in \Z[x]$ be a monic irreducible polynomial  having factorization $\bar{g}_1(x)^{e_1} \cdots \bar{g}_{t}(x)^{e_{t}}$ modulo a prime $p$ as a product of powers of distinct irreducible polynomials over $\Z/p\Z$ with $g_i(x) \in \Z[x]$ monic. Let $K=\Q(\theta)$ with $\theta$ a root of $f(x)$.   Then the following statements are equivalent:
	\begin{itemize}
		\item[\textup{(i)}] $p$ does not divide $[\Z_K:\Z[\theta]]$.
		\item[\textup{(ii)}] For each $i$, we have $e_i =1 $ or $\overline g_i (x)$ does not divide $\overline M(x)$, where $M(x) = \frac{1}{p}(f(x) -  g_1 (x)^{e_1} \cdots  g_{t} (x)^{e_{t}} )$.
		\item[\textup{(iii)}] $f(x)$ does not belong to the ideal $\langle p, g_i(x)\rangle^2$ in $\Z[x]$ for any $i$, $1\leq i\leq t$.
	\end{itemize}
\end{lemma}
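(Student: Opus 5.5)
The statement is the combination of Dedekind's criterion with an ideal-theoretic reformulation. I will treat the two equivalences separately, taking (i)$\Leftrightarrow$(ii) as the classical content and proving (ii)$\Leftrightarrow$(iii) directly by elementary manipulation in $\Z[x]$.

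\emph{(i)$\Leftrightarrow$(ii).} I will cite Dedekind's criterion \cite[Theorem 6.1.4]{CH1993} rather than reprove it. For the reader I would recall the standard argument. Put $g(x)=\prod_i g_i(x)$ and $h(x)=\prod_i g_i(x)^{e_i-1}$, and let $U$ be a monic lift of $\bar f/\gcd(\bar f,\bar g,\bar M)$.
\begin{itemize}
\item One shows that $\Z[\theta]+\frac{1}{p}U(\theta)\Z[\theta]$ is a ring contained in $\Z_K$.
\item This ring strictly contains $\Z[\theta]$ exactly when some $\bar g_i$ with $e_i\ge 2$ divides $\bar M$.
\item Conversely, $p\mid[\Z_K:\Z[\theta]]$ forces the $p$-radical $\langle p,g(\theta)\rangle$ of $\Z[\theta]$ not to be invertible. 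This yields such a $\bar g_i$.
\end{itemize}

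\emph{(ii)$\Leftrightarrow$(iii).} Write $f=\prod_j g_j^{e_j}+pM$ and note $\langle p,g_i\rangle^2=\langle p^2,pg_i,g_i^2\rangle$. I will show, for each fixed $i$, that $f\in\langle p,g_i\rangle^2$ if and only if $e_i\ge2$ and $\bar g_i\mid\bar M$. This gives the equivalence after negating and quantifying over $i$.

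\emph{Case $e_i=1$.} If $f\in\langle p,g_i\rangle^2$, then reducing modulo $p$ gives $\bar g_i^2\mid\bar f$. This contradicts the fact that the $\bar g_j$ are distinct irreducibles and $\bar g_i$ occurs in $\bar f$ with exponent exactly one.

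\emph{Case $e_i\ge2$.} Here $\prod_j g_j^{e_j}\in\langle g_i^2\rangle$, so $f\in\langle p,g_i\rangle^2$ if and only if $pM\in\langle p,g_i\rangle^2$.
\begin{itemize}
\item If $\bar g_i\mid\bar M$, write $M=g_iB+pA$. Then $pM=pg_iB+p^2A$ lies in the ideal.
\item Conversely, suppose $pM=p^2A+pg_iB+g_i^2C$. Reducing modulo $p$ gives $\bar g_i^2\bar C=0$. Since $\F_p[x]$ is a domain, $C=pC'$. Dividing by $p$ gives $M=pA+g_iB+g_i^2C'$, hence $\bar g_i\mid\bar M$.
\end{itemize}

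I should also remark that the condition in (ii) does not depend on the chosen monic lifts $g_j$. This follows from the equivalence with (iii), which is manifestly lift-independent because $\langle p,g_i\rangle$ depends only on $\bar g_i$.

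The main obstacle is the reverse implication in (i)$\Rightarrow$(ii), namely producing a failing $\bar g_i$ from $p\mid[\Z_K:\Z[\theta]]$. For this I rely on the cited classical theorem, or equivalently on Uchida's characterization \cite{Uch77} that $\Z[\theta]$ is $p$-maximal iff no maximal ideal $\langle p,g_i\rangle$ contains $f$ in its square.
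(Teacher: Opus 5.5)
Your proof is correct, and it is organized differently from the paper's. The paper gives no argument of its own here. It cites Dedekind \cite[Theorem 6.1.4]{CH1993} for (i)$\Leftrightarrow$(ii), and Uchida \cite{Uch77} for (i)$\Leftrightarrow$(iii). Uchida's criterion says that $p$-maximality of $\Z[\theta]$ is equivalent to $f\notin\mathfrak M^2$ for the maximal ideals $\mathfrak M=\langle p,g_i\rangle$ of $\Z[x]$ containing $f$.

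You cite only Dedekind, and you replace the Uchida step by a self-contained elementary proof of (ii)$\Leftrightarrow$(iii). It uses $\langle p,g_i\rangle^2=\langle p^2,pg_i,g_i^2\rangle$ and the fact that $\F_p[x]$ is a domain. You handle correctly both the case $e_i=1$ and the two directions for $e_i\ge 2$.

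Your route gives several things the paper's citations leave hidden:
\begin{itemize}
\item It shows that either citation alone suffices, since your lemma converts Dedekind's form of the criterion into Uchida's and back.
\item It proves that condition (ii) does not depend on the chosen lifts $g_j$.
\item It isolates the per-$i$ statement that $f\in\langle p,g_i\rangle^2$ if and only if $e_i\ge 2$ and $\bar g_i\mid\bar M$. This is exactly how the lemma is used in the proof of Theorem~\ref{monogeneity of f(x)}, where membership in $\langle p,g_i\rangle^2$ is decided by inspecting the $p$-part of $f$.
\end{itemize}
The paper's route is shorter and ties (iii) directly to the local structure of $\Z[x]/(f)$ at $\langle p,g_i\rangle$.

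There is one slip in your recalled sketch of Dedekind's argument. It is harmless, since you cite the theorem rather than rely on the sketch. $U$ should be a monic lift of $\bar f/\gcd(\bar g,\bar h,\bar M)$, not of $\bar f/\gcd(\bar f,\bar g,\bar M)$. Because $\bar g\mid\bar f$, your gcd is just $\gcd(\bar g,\bar M)$, and it can contain factors $\bar g_i$ with $e_i=1$. For instance, for $f=x^2-x+p^2$ one has $\bar M=0$, so your $U$ would be $1$ and $\frac{1}{p}U(\theta)=\frac{1}{p}\notin\Z_K$. The polynomial $h$ that you defined but never used is what belongs in that gcd.
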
 
The following lemma provides a useful decomposition of $f(x)$ in terms of the auxiliary polynomial $h(x)$ defined below; this decomposition will be used in the proof of Theorem~\ref{monogeneity of f(x)}.

\begin{lemma}\label{lemma:3.6}
	Let $f(x)=(x^k+c)^m-ax^n\in\Z[x]$ be a polynomial of degree $km$. Let $p$ be a prime number such that $n=p^js,~k=p^js',$ where $p\nmid\gcd(s,s')$. Define $h(x)=(x^{\,s'}+c)^m-ax^s\in\Z[x]$. Then $f(x)$ can be expressed as \[f(x) =  h(x)^{\,p^{j}}+(a^{\,p^{j}} + a)\,x^{n} - p m\, t(x)(x^{k} + c)^{\,m-1}+ p\,h(x)\,h_{1}(x) + p^{2}\,h_{4}(x),\] for some $h_1(x),\, h_4(x)\in\Z[x]$ and $t(x) = \frac{1}{p} \left( \sum_{i=1}^{p^{j}} \binom{p^{j}}{i} x^{\,s' p^{j}} c^{\,p^{j}-i} - c \right) \in \mathbb{Z}[x].$
\end{lemma}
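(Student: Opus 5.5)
The plan is to expand both pieces of $f(x)=(x^k+c)^m-ax^n$ in powers of $h(x)=(x^{s'}+c)^m-ax^s$, working modulo $p^2$ and tracking the single term of exact order $p$. I will read the displayed constants as the natural ones: the $x^n$-coefficient is $\pm(a^{p^j}-a)$, and
\[
p\,t(x)=\sum_{i=0}^{p^j-1}\binom{p^j}{i}x^{s'i}c^{p^j-i}-c,
\]
so that
\[
(x^{s'}+c)^{p^j}=x^k+c+p\,t(x),
\]
as in part (v) of Theorem~\ref{monogeneity of f(x)}. The first check is that $t(x)\in\Z[x]$. This holds because $p\mid\binom{p^j}{i}$ for $0<i<p^j$ and $c^{p^j}\equiv c\pmod p$ by Fermat.

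\emph{Step 1 (the $(x^k+c)^m$ part).} From the identity above, $x^k+c=(x^{s'}+c)^{p^j}-p\,t(x)$. Raising to the $m$-th power and expanding binomially gives
\[
(x^k+c)^m=(x^{s'}+c)^{mp^j}-pm\,t(x)\,(x^{s'}+c)^{p^j(m-1)}+p^2(\cdots).
\]
Since $(x^{s'}+c)^{p^j(m-1)}\equiv (x^k+c)^{m-1}\pmod p$, the middle term equals $-pm\,t(x)(x^k+c)^{m-1}$ up to an element of $p^2\Z[x]$.

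\emph{Step 2 (comparing with $h^{p^j}$).} Put $A=(x^{s'}+c)^m$ and $B=ax^s$, so that $h=A-B$, $A^{p^j}=(x^{s'}+c)^{mp^j}$ and $B^{p^j}=a^{p^j}x^n$. I need the identity
\[
(A-B)^{p^j}=A^{p^j}+(-B)^{p^j}+p\,(A-B)\,H(A,B)+p^2H'(A,B)
\]
with $H,H'\in\Z[A,B]$. This is the same mechanism as Lemma~\ref{binomial expansion}. For $j=1$ and $p$ odd, the polynomial $\frac{1}{p}\bigl((u+v)^p-u^p-v^p\bigr)$ is divisible by $uv(u+v)$, since it vanishes at $u=0$, at $v=0$ and at $u=-v$. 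For general $j$ I will induct by writing $(A-B)^{p^{j}}=\bigl((A-B)^{p^{j-1}}\bigr)^p$, and substituting the inductive expression $(A-B)^{p^{j-1}}=A^{p^{j-1}}+(-B)^{p^{j-1}}+p(A-B)(\cdots)+p^2(\cdots)$. Raising that to the $p$-th power and applying the $j=1$ case to $A^{p^{j-1}}+(-B)^{p^{j-1}}$ gives a correction divisible by $A^{p^{j-1}}+(-B)^{p^{j-1}}\equiv (A-B)^{p^{j-1}}\pmod p$, which is divisible by $A-B$. Every other cross term already carries $p^2$ or the factor $p(A-B)$.

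\emph{Step 3 (assembling).} For $p$ odd, $(-B)^{p^j}=-a^{p^j}x^n$. Then
\[
f=A^{p^j}-pm\,t(x)(x^k+c)^{m-1}-ax^n+p^2(\cdots),
\]
\[
h^{p^j}=A^{p^j}-a^{p^j}x^n+p\,h\,h_1+p^2(\cdots).
\]
Subtracting gives $f=h^{p^j}+(a^{p^j}-a)x^n-pm\,t(x)(x^k+c)^{m-1}-p\,h\,h_1+p^2h_4$. The sign of $h_1$ can be absorbed, which yields the stated form with the coefficient read as $a^{p^j}-a$. For $p=2$ we have $(-B)^{2^j}=+a^{2^j}x^n$, so the coefficient is $-(a^{2^j}+a)$. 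This coefficient is still divisible by $2$ and differs from $a^{2^j}-a$ by a multiple of $2$, so the $p=2$ case is a parity adjustment only.

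\emph{Main obstacle.} I expect Step 2 to be the hard part, namely showing that the order-$p$ correction in $(A-B)^{p^j}$ is divisible by $h=A-B$ in $\Z[x]$ and not merely modulo $p$. The divisibility by $uv(u+v)$ must be established as a polynomial identity over $\Z$ and then carried carefully through the induction on $j$. The case $p=2$, where the factor $u+v$ argument needs the sign adjustment above, needs separate care.
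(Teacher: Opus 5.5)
For odd $p$ your argument is correct and follows the paper's skeleton: compute $(x^{s'}+c)^{mp^j}$ once through $x^k+c+p\,t(x)$ and once through $h$. Your reading of the garbled constants is also right, since the paper's own proof ends with $(a^{p^j}-a)x^n$. The gap is the prime $p=2$, which the lemma covers and Case (v) of the theorem uses.

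\textbf{Where Step 2 fails.} Your identity
\[
(A-B)^{p^j}=A^{p^j}+(-B)^{p^j}+p(A-B)H+p^2H'
\]
is false for $p=2$. At $A=B$, the quantity $\tfrac12\bigl((A-B)^{2^j}-A^{2^j}-B^{2^j}\bigr)$ equals $-A^{2^j}$, so it does not lie in $(A-B)\Z[A,B]+2\Z[A,B]$. Hence the coefficient $-(a^{2^j}+a)$ you derive for $p=2$ is wrong.

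\textbf{Why the parity adjustment does not repair it.} The discrepancy $2a^{2^j}x^n$ has exact order $p$, which is precisely the precision the lemma tracks. It lies in $2h\Z[x]+4\Z[x]$ only if $\bar h\mid \bar a x^n$ in $\F_2[x]$. In Case (v) ($p\nmid ca$) this never holds: there $\bar a=1$, and $\bar h(0)=\bar c^{\,m}\neq 0$ with $\bar h$ nonconstant.

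\textbf{A concrete counterexample.} Take $f=x^4+x^2+1$, that is, $k=m=n=2$, $a=c=1$, $p=2$, so $h=x^2+x+1$. Then $f=h^2-2xh$, so the true $x^n$-coefficient is $a^2-a=0$. Your $p=2$ version would require $2x^2\in 2h\Z[x]+4\Z[x]$, which fails because $x^2+x+1\nmid x^2$ over $\F_2$.

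\textbf{The fix.} Your \emph{main obstacle} is an artifact of expanding around $A^{p^j}$ and $(-B)^{p^j}$. Instead write $A=h+B$ and expand directly:
\[
(x^{s'}+c)^{mp^j}=(h+ax^s)^{p^j}=h^{p^j}+a^{p^j}x^n+\sum_{0<i<p^j}\binom{p^j}{i}h^{i}(ax^s)^{p^j-i}.
\]
Every summand has $i\ge 1$ and a binomial coefficient divisible by $p$. So the sum is exactly $p\,h\,h_1$ with $h_1\in\Z[x]$, for every prime $p$ including $2$. This needs no induction, no $p^2$ error term and no sign adjustment; it is the paper's first step. Combined with your Step 1, it gives
\[
f=h^{p^j}+(a^{p^j}-a)x^n-pm\,t(x)(x^k+c)^{m-1}+p\,h\,h_1+p^2h_4
\]
uniformly in $p$.
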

\begin{proof}
Since  $h(x) = (x^{s'} + c)^m - a x^s.$ We first expand $(x^{s'} + c)^{mp^j}$ using the binomial theorem:
\[
(x^{s'} + c)^{mp^j}
= \bigl( h(x) + ax^s \bigr)^{p^{j}}
= h(x)^{p^{j}} + p\,h(x)h_{1}(x)  + p^{2}h_{2}(x)+ a^{p^{j}}x^{n},
\]
for some $h_{1}(x), h_{2}(x) \in \mathbb{Z}[x]$. On the other hand,
\[
(x^{s'} + c)^{mp^j}
= \bigl( x^{k} + c + p\,t(x) \bigr)^{m},
\]
where
\[
t(x) = \frac{1}{p} \left( \sum_{i=0}^{p^{j}-1} \binom{p^{j}}{i} x^{\,s' i}\, c^{\,p^{j}-i} - c \right) \in \mathbb{Z}[x].
\]

Expanding the last expression again using the binomial theorem gives
\[
\bigl( x^{k} + c + p\,t(x) \bigr)^{m}
= (x^{k} + c)^{m} + p m\, t(x) (x^{k} + c)^{m-1} + p^{2} h_{3}(x),
\]
for some $h_{3}(x) \in \mathbb{Z}[x]$. Comparing the two expansions yields
\[
h(x)^{p^{j}} + p\,h(x)h_{1}(x) + p^{2}h_{2}(x)+ a^{p^{j}}x^{n}
= (x^{k} + c)^{m} + p m\, t(x) (x^{k} + c)^{m-1} + p^{2} h_{3}(x).
\]

Rearranging the terms and using $h(x) = (x^{s'} + c)^m - a x^s$ finally gives
\[
f(x) =  h(x)^{\,p^{j}}+(a^{\,p^{j}} - a)\,x^{n} - p m\, t(x)(x^{k} + c)^{\,m-1}+ p\,h(x)\,h_{1}(x) + p^{2}\,h_{4}(x),
\]
with $h_{4}(x) \in \mathbb{Z}[x]$.
\end{proof}
The following proposition establishes the equivalence between the prime $p$ dividing $\calC-\calE$ and the existence of repeated roots of $f(x)$ modulo $p$, which will be used in the proof of Theorem~\ref{monogeneity of f(x)}(vii).
\begin{proposition}\label{2.06}
	Let $f(x), ~n_1, ~k_1, ~\calC$ and $\calE$ be as defined in Theorem \ref{monogeneity of f(x)}. Let $u_1, u_2\in \Z$ satisfy $ ku_1+nu_2=t$ and $p$ be a prime such that $p\nmid ackm(km-n)$. Furthermore, suppose that there exist integers $ \alpha_1$ and $\alpha_2$ such that $\alpha_1 \equiv \frac{nc}{km-n} \pmod{p^2}$ and $\alpha_2\equiv a^{-1} \left(\frac{kmc}{km-n}\right)^m \pmod{p^2}$. Then the following are equivalent for $i=1,2$.
	\begin{itemize}
		\item[\textup{(i)}] $p^i\mid (\calC-\calE)$;
		\item [\textup{(ii)}] $\alpha_2^{k_1}\equiv\alpha_1^{n_1}\pmod{p^i}$;
		\item [\textup{(iii)}] If $\beta$ satisfies $\beta^t \equiv \alpha_1^{u_1} \alpha_2^{u_2} \pmod{p^i}$, then $\beta$ is a repeated root of $f(x)$ modulo $p^i$.
	\end{itemize}
\end{proposition}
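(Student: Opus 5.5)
The plan is to prove (i)$\Leftrightarrow$(ii) by a direct algebraic rewriting of $\calC-\calE$, and then (ii)$\Leftrightarrow$(iii) by interpreting a double root through the relations $f(\beta)\equiv f'(\beta)\equiv 0$. Throughout, $p\nmid ackm(km-n)$, so $km-n$, $a$, $c$, $km$ and $n$ are units modulo $p^2$. This is what makes $\alpha_1,\alpha_2$ well defined and invertible.

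\emph{Step 1: (i)$\Leftrightarrow$(ii).} Recall $\calC=(km)^{k_1m}c^{k_1m-n_1}$ and $\calE=a^{k_1}n^{n_1}(km-n)^{k_1m-n_1}$. Multiply $\calC-\calE$ by the unit $a^{-k_1}(km-n)^{-k_1m}c^{n_1}$ modulo $p^2$. This gives
\[
a^{-k_1}\Bigl(\frac{kmc}{km-n}\Bigr)^{k_1m}-\Bigl(\frac{nc}{km-n}\Bigr)^{n_1}\equiv \alpha_2^{k_1}-\alpha_1^{n_1}\pmod{p^2}.
\]
Multiplying by a unit does not change divisibility by $p^i$ for $i=1,2$, so (i) and (ii) are equivalent.

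\emph{Step 2: (ii)$\Leftrightarrow$(iii).} Repeat the computation from the proof of Theorem~\ref{disc of f(x)}, now modulo $p^i$. Suppose $\beta$ is a root of $f$ modulo $p^i$ with $p\nmid \beta$; this is automatic when $\beta^t$ is a unit, as in (iii). Then the identity \eqref{eq:2} shows that $f'(\beta)\equiv0$ if and only if $(\beta^k+c)^{m-1}\bigl((km-n)\beta^k-nc\bigr)\equiv 0$. Since $a\beta^n$ is a unit, $\beta^k+c$ is a unit, so the condition becomes $\beta^k\equiv \alpha_1$. Then $f(\beta)\equiv0$ gives $\beta^n\equiv a^{-1}(\alpha_1+c)^m=\alpha_2$, using $\alpha_1+c\equiv kmc/(km-n)$.

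Conversely, let $\beta^t\equiv\alpha_1^{u_1}\alpha_2^{u_2}$. Then $\beta^k=(\beta^t)^{k_1}\equiv\alpha_1^{k_1u_1}\alpha_2^{k_1u_2}$. Using $k_1u_1+n_1u_2=1$ and (ii) in the form $\alpha_2^{k_1}\equiv\alpha_1^{n_1}$, we get $\alpha_2^{k_1u_2}\equiv\alpha_1^{n_1u_2}$, hence $\beta^k\equiv\alpha_1$. In the same way $\beta^n\equiv\alpha_1^{n_1u_1}\alpha_2^{n_1u_2}$, and rewriting $\alpha_1^{n_1u_1}\equiv\alpha_2^{k_1u_1}$ gives $\beta^n\equiv\alpha_2$. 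Therefore $f(\beta)\equiv(\alpha_1+c)^m-a\alpha_2\equiv0$ and $f'(\beta)\equiv0$, so $\beta$ is a repeated root modulo $p^i$.

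For (iii)$\Rightarrow$(ii), I will read (iii) as asserting that such a $\beta$ exists and is a repeated root. Then $\beta^k\equiv\alpha_1$ and $\beta^n\equiv\alpha_2$, and $\beta^{kn_1}=\beta^{nk_1}$ yields $\alpha_1^{n_1}\equiv\alpha_2^{k_1}$.

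\emph{Main obstacle.} I expect the quantifier in (iii) to be the hardest point. It is an implication about $\beta$, so it is vacuous if no $t$-th root exists modulo $p^i$. I would handle this by interpreting (iii) over a suitable extension: work in the unramified extension $\mathbb{Z}_p[\zeta]$ modulo $p^i$, where a $t$-th root of the unit $\alpha_1^{u_1}\alpha_2^{u_2}$ exists after adjoining one (when $p\nmid t$, Hensel's lemma lifts it from the residue field). Alternatively, I would follow the authors' likely convention that such a $\beta$ is given. With either reading, the chain of congruences above goes through verbatim, since it only uses that $\beta$ is a unit and the Bezout relation $ku_1+nu_2=t$.
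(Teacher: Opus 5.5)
Your proposal is correct and follows the paper's proof essentially step for step. For (i)$\Leftrightarrow$(ii) you factor the units $a$, $c$, $km-n$ out of $\calC-\calE$; for (ii)$\Leftrightarrow$(iii) you use $f(\beta)\equiv 0$ to reduce $f'(\beta)\equiv 0$ to $\beta^k\equiv\alpha_1$ (hence $\beta^n\equiv\alpha_2$), combined with the B\'ezout relation $k_1u_1+n_1u_2=1$.

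Your treatment is more careful than the paper's, which proves only $i=1$ (in $\overline{\F}_p$) and says $i=2$ is similar. You argue modulo $p^i$ throughout and secure the existence of $\beta$ via Hensel's lemma; the condition $p\nmid t$ you need there is automatic, since $t\mid k$ and $p\nmid k$.
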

\begin{proof}
	We provide the proof of the above equivalence for $i=1.$ A similar argument will prove the case for $i=2.$ Using $p \nmid ackm(km-n)$, it is easy to note that $\alpha_1 \alpha_2 \not \equiv 0 \pmod{p}$\\
		\noindent (i)$\Leftrightarrow $ (ii) Since $p\mid (\calC-\calE)$, we have
	\begin{align*}
		(km)^{k_1m} c^{k_1m-n_1} \equiv a^{k_1} n^{n_1} (km-n)^{k_1m-n_1} \pmod{p}.
	\end{align*}
	Since $p \nmid ac(km-n)$, the above equation can be rewritten as
	\begin{equation}
		\left( \frac{nc}{km-n} \right)^{n_1} \equiv \left( a^{-1} \left( \frac{kmc}{km-n} \right)^m \right)^{k_1} \pmod{p}.
	\end{equation}
	Substituting the values of $\alpha_1$ and $\alpha_2$ in the above equation, we get
	\begin{align*}
		\alpha_1^{n_1}\equiv\alpha_2^{k_1}\pmod{p}.
	\end{align*}
	Note that all the above arguments are if and only if, so the converse is also true.\\\\
		\noindent  (ii) $\Rightarrow $ (iii)
        We show that $\beta$ with $\beta^t\equiv\alpha_1^{u_1}\alpha_2^{u_2}\pmod {p}$ is a repeated root of $f(x)$ modulo $p.$ It is easy to observe that here $\beta \not\equiv 0 \pmod{p}$.
		First, we show that $f(\beta)\equiv 0\pmod{p}.$
		\begin{align*}
			f(\beta)&=(\beta^k+c)^{m}-a\beta^n,\\
			&=\left((\beta^t)^{k_1}+c\right)^{m} -a(\beta^t)^{n_1}.
		\end{align*}
		Using $\beta^t\equiv \alpha_1^{u_1}\alpha_2^{u_2}\pmod {p}$ in the above equation, we have
		\begin{equation}\label{.002}
            f(\beta)\equiv \left((\alpha_1^{u_1}\alpha_2^{u_2})^{k_1}+c\right)^{m} -a(\alpha_1^{u_1}\alpha_2^{u_2})^{n_1} \pmod{p}.
		\end{equation}
		Using $k_1u_1 + n_1u_2 = 1$ and $\alpha_2^{k_1}\equiv\alpha_1^{n_1}\pmod{p}$ in \eqref{.002}, we get
		\begin{equation*}
			f(\beta) \equiv (\alpha_1+c)^{m} -a \alpha_2\pmod{p}.
		\end{equation*}
		Substituting the values of $\alpha_1$ and $\alpha_2$ in the above equation, we see that
		\begin{align*}
			f(\beta)&\equiv \left(\frac{kmc}{km-n}\right)^m -\left(\frac{kmc}{km-n}\right)^m\pmod{p},\\
			&\equiv0\pmod{p}.
		\end{align*}
		Now, we show that $f'(\beta)\equiv 0 \pmod{p}$. Taking the derivative of $f(x)$ and substituting $x=\beta$, we obtain
		\begin{align*}
				f'(\beta)&\equiv km\beta^{k-1}(\beta^{k}+c)^{m-1} - an\beta^{n-1}\pmod{p},\\
					&\equiv \beta^{-1} \left(km\beta^{k_1t}(\beta^{k_1t}+c)^{m-1} - an\beta^{n_1t} \right)\pmod{p},\\
				&\equiv \beta^{-1} \left(km(\alpha_1^{u_1}\alpha_2^{u_2})^{k_1}((\alpha_1^{u_1}\alpha_2^{u_2})^{k_1}+c)^{m-1} -an(\alpha_1^{u_1}\alpha_2^{u_2})^{n_1}\right)\pmod{p}.
				\end{align*}
				Using $n_1u_2+k_1u_1=1$ and  $\alpha_{1}^{n_1}\equiv\alpha_2^{k_1}\pmod{p}$ in the above equation, we have
				\begin{align*}
				f'(\beta)	\equiv \beta^{-1} \left(km\alpha_{1}(\alpha_{1}+c)^{m-1} - n a\alpha_2 \right)\pmod{p}.
				\end{align*}		
				Substituting values of $\alpha_1$ and $\alpha_{2}$ in the above equation, we obtain
				\begin{align}
				f'(\beta)&\equiv \beta^{-1} \left( \frac{kmnc}{km-n} \left( \frac{nc}{km-n} + c \right)^{m-1} -n \left( \frac{kmc}{km-n} \right)^m \right) \pmod{p},\nonumber\\
				&\equiv \beta^{-1} \left(n\left(\frac{kmc}{km-n}\right)^m - n \left(\frac{kmc}{km-n}\right)^{m}\right) \pmod{p},\nonumber\\
				&\equiv 0 \pmod{p}.
				\end{align}
		\noindent  (iii) $\Rightarrow $ (ii)
		Suppose that $\beta$ is any repeated root of $\bar f (x)= ( x^k+\bar c)^m - \bar a x^n$ in algebraic closure of $\Z/p\Z.$  Note that $\beta \neq \bar{0}$ is as $p\nmid ac$. Then
		\begin{equation}\label {0.008}
			\overline{f}(\beta) = ({\beta}^k+\bar c)^m -\bar a {\beta}^n = \bar{0};~~	\overline{f'}(\beta)= \bar k \overline m \beta^{k-1}({\beta}^k+\bar c)^{m-1} -\bar a \bar{n}{\beta}^{n-1} = \bar{0}. 
		\end{equation}
			On substituting $ \bar a {\beta}^{n} = ({\beta}^k+\bar c)^m$ into \eqref{0.008}, we see that
		\begin{equation}
			\frac{1}{\beta}(\beta^k+c)^{m-1} \left[ (km-n) \beta^k - nc\right] \equiv 0 \pmod{p}.
		\end{equation}
		Observe that $(\beta^k+c) \not\equiv 0\pmod{p}$, otherwise in view  of \eqref{0.008}, we obtain $\beta=\bar 0$, which is not possible. Therefore, keeping in mind that $p\nmid (km-n)$, we have
		\begin{align}\label{0.009}
			\beta^k &\equiv \frac{nc}{km-n}\pmod{p},\nonumber\\
			&\equiv \alpha_1 \pmod{p}.
		\end{align}
Changing $k$ by $tk_1$ and substituting the value $\beta^t \equiv \alpha_1^{u_1}\alpha_2^{u_2}\pmod {p}$ into \eqref{0.009}, we have
		\begin{align*}
			(\alpha_1^{u_1}\alpha_2^{u_2})^{k_1} \equiv \alpha_1 \pmod{p}.
		\end{align*}
		Taking the power $n_1$ on both sides of the  above equation, we obtain
		\begin{equation}\label{.a}
			(\alpha_1^{u_1}\alpha_2^{u_2})^{k_1n_1} \equiv \alpha_1^{n_1} \pmod{p}.
		\end{equation}
Substituting $\beta^k \equiv \frac{nc}{km-n}\pmod {p}$ into the first equation of \eqref{0.008}, we get
	\begin{align*}
		\beta^n & \equiv a^{-1} \left(\frac{kmc}{km-n}\right)^m \pmod{p},\\
		&\equiv \alpha_2 \pmod{p}.
	\end{align*}
Changing $n$ by $n_1t$ and substituting $\beta^t \equiv \alpha_1^{u_1}\alpha_2^{u_2} \pmod{p}$, we observe
	\begin{align*}
		(\alpha_1^{u_1}\alpha_2^{u_2})^{n_1} \equiv \alpha_2 \pmod{p}.
	\end{align*}
Taking the  power $k_1$ on both sides of the above equation, we get
		\begin{equation}\label{.b}
		(\alpha_1^{u_1}\alpha_2^{u_2})^{n_1k_1} \equiv \alpha_2^{k_1} \pmod{p}.
	\end{equation}
	From \eqref{.a} and \eqref{.b}, we conclude that $\alpha_1^{n_1} \equiv \alpha_2^{k_1} \pmod{p}$. This completes the proof of  the proposition.
\end{proof}

\begin{proposition} \label{4.4:prop}
    Assume the notation introduced in Proposition \ref{2.06}. Let $\a$ be an integer such that $\a \equiv \a_1^{\,u_1}\a_2^{\,u_2} \Mod{p^2}$. Let $p$ be the prime such that $p \mid (\calC - \calE)$ and $p \nmid ackm(km-n)$. Then the following are equivalent:
    \begin{itemize}
        \item[\textup{(i)}] $(\alpha^{\,k_1} + c)^m - a \alpha^{\,n_1} \equiv 0 \Mod{p^2}$;
        \item[\textup{(ii)}] $\alpha_1^{\,n_1} \equiv \alpha_2^{\,k_1} \Mod{p^2}$.
    \end{itemize}
\end{proposition}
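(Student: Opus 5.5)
The plan is to reduce both conditions to a single ``defect'' $\lambda$ measuring how far $\alpha_2^{k_1}$ is from $\alpha_1^{n_1}$ modulo $p^2$, and then to compute $(\alpha^{k_1}+c)^m-a\alpha^{n_1}$ modulo $p^2$ explicitly in terms of $\lambda$.

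\textbf{Setup.} Since $p\nmid ackm(km-n)$, both $\alpha_1$ and $\alpha_2$ are units modulo $p^2$, as noted at the start of the proof of Proposition~\ref{2.06}. Hence negative powers such as $\alpha_1^{u_1}$ make sense in $(\mathbb{Z}/p^2\mathbb{Z})^\times$. Dividing $ku_1+nu_2=t$ by $t$ gives
\[
k_1u_1+n_1u_2=1 .
\]
Because $p\mid(\calC-\calE)$, Proposition~\ref{2.06} with $i=1$ gives $\alpha_2^{k_1}\equiv\alpha_1^{n_1}\pmod p$. So I will write
\[
\alpha_2^{k_1}\equiv \alpha_1^{n_1}(1+p\lambda)\pmod{p^2}\quad\text{for some }\lambda\in\mathbb{Z}.
\]
Condition (ii) is then exactly the statement $p\mid\lambda$.

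\textbf{Rewriting $\alpha^{k_1}$ and $\alpha^{n_1}$.} Using $k_1u_1+n_1u_2=1$ and substituting $\alpha_2^{k_1}=\alpha_1^{n_1}w$ with $w=1+p\lambda$, I get
\[
\alpha^{k_1}\equiv\alpha_1^{k_1u_1}\bigl(\alpha_2^{k_1}\bigr)^{u_2}\equiv \alpha_1\,w^{u_2},\qquad
\alpha^{n_1}\equiv\bigl(\alpha_1^{n_1}\bigr)^{u_1}\alpha_2^{n_1u_2}\equiv \alpha_2\,w^{-u_1}\pmod{p^2}.
\]
Modulo $p^2$ one has $w^{e}\equiv 1+ep\lambda$ for any integer $e$, including negative ones. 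Therefore
\[
\alpha^{k_1}\equiv\alpha_1+u_2\alpha_1p\lambda,\qquad \alpha^{n_1}\equiv\alpha_2-u_1\alpha_2p\lambda \pmod{p^2}.
\]

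\textbf{Computing $F$.} Set $F=(\alpha^{k_1}+c)^m-a\alpha^{n_1}$ and expand by the binomial theorem modulo $p^2$:
\[
F\equiv (\alpha_1+c)^m-a\alpha_2+p\lambda\bigl(m u_2\alpha_1(\alpha_1+c)^{m-1}+u_1a\alpha_2\bigr)\pmod{p^2}.
\]
I then use the defining congruences for $\alpha_1$ and $\alpha_2$, which hold modulo $p^2$:
\begin{itemize}
\item $\alpha_1+c\equiv \frac{kmc}{km-n}$, so $(\alpha_1+c)^m\equiv a\alpha_2$ and the constant term vanishes;
\item $\alpha_1/(\alpha_1+c)\equiv n/(km)$, so $\alpha_1(\alpha_1+c)^{m-1}\equiv a\alpha_2\cdot\frac{n}{km}$.
\end{itemize}
Substituting these, and using $nu_2+ku_1=t$, gives
\[
F\equiv p\lambda\, a\alpha_2\Bigl(\frac{nu_2}{k}+u_1\Bigr)\equiv p\lambda\,a\alpha_2\,\frac{t}{k}= \frac{a\alpha_2}{k_1}\,p\lambda\pmod{p^2}.
\]

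\textbf{Conclusion.} The factor $a\alpha_2/k_1$ is a $p$-adic unit, since $p\nmid ak$ and $\alpha_2$ is a unit. Hence $F\equiv0\pmod{p^2}$ if and only if $p\mid\lambda$, which is exactly (ii).

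The step needing the most care is the cancellation that collapses the coefficient of $p\lambda$ to $a\alpha_2/k_1$. It relies on the congruences for $\alpha_1$ and $\alpha_2$ holding modulo $p^2$ and not just modulo $p$, and on inverting $k$, $m$ and $km-n$ there. I will verify that this identity is exact modulo $p^2$ and that negative exponents $u_1,u_2$ cause no trouble, both of which rest on the unit observations in the setup.
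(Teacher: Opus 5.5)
Your argument is correct and is essentially the paper's proof. Both measure the failure of (ii) modulo $p^2$ by a single parameter, expand $(\alpha^{k_1}+c)^m-a\alpha^{n_1}$ to first order in $p$, cancel the constant term via $(\alpha_1+c)^m\equiv a\alpha_2 \pmod{p^2}$, and use $ku_1+nu_2=t$ to reduce the coefficient of $p$ to a unit multiple of $at\alpha_2/k$. Your multiplicative defect $w=1+p\lambda$, with $w^{e}\equiv 1+ep\lambda$ for all $e\in\mathbb{Z}$, is tidier bookkeeping than the paper's additive $\mathfrak{s}$: it removes the paper's assumption $u_2\ge 0>u_1$ and its harmless sign slips.

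The one point to patch is your reason for $\alpha_1$ being a unit. Since $\alpha_1\equiv nc/(km-n)$, this needs $p\nmid n$, which is not part of $p\nmid ackm(km-n)$. It does follow at once from $\alpha_1^{n_1}\equiv\alpha_2^{k_1}\pmod p$ together with $\alpha_2$ being a unit.
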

\begin{proof}
    First, we wish to point out that both $\a_1$ and $\a_2$ are non-zero modulo $p^2$, as $p \nmid ackm(km-n)$. In addition, a simple calculation will yield $\a_1 + c \not\equiv 0 \Mod{p^2}$. Without loss of generality, assume that $u_2 \geq 0$ and $u_1 < 0$. As $p \mid (\calC-\calE)$, using Proposition \ref{2.06}, we have $\alpha_1^{\,n_1} \equiv \alpha_2^{\,k_1} \Mod{p}$. Let $\sfrak$ be an integer such that
    \begin{equation} \label{4.9:eqn}
        \a_1^{\,n_1} = \a_2^{\,k_1} + p\sfrak; \quad \quad \quad
        \a_2^{-k_1} = \a_1^{-n_1} + p\a_2^{-k_1}\a_1^{-n_1} \sfrak. 
    \end{equation}
    Using the definition of $\a_1$ and $\a_2$, a simple calculation will yield the following.
    \begin{equation} \label{4.10:eqn}
        (\a_1+c)^m - a \,\a_2 \equiv 0 \Mod{p^2}.
    \end{equation}
    In the next derivation, all the congruences are taken module $p^2$. Using $ku_1 + nu_2 = t$ and $\a \equiv \a_1^{\,u_1}\a_2^{\,u_2} \Mod{p^2}$, we have
    \begin{align*}
        (\a^{k_1} + c)^m - a \a^{n_1} 
        \equiv~ &(\a_1^{\,k_1u_1}\a_2^{\,k_1u_2} +c)^m - a \a_1^{\,n_1u_1}\a_2^{\,n_1u_2} \\
        \equiv~ &[ \a_1^{\,k_1u_1}(\a_2^{\,k_1})^{u_2} + c]^m - a (\a_1^{-n_1})^{-u_1}\a_2^{\,u_2}.
    \end{align*}
    Using \eqref{4.9:eqn}, Binomial expansion on $u_2\geq 0$ and reduction modulo $p^2$, we obtain
    \begin{align*}
        &(\a^{k_1} + c)^m - a \a^{n_1} \\
        \equiv~ &\left[ \a_1^{\,k_1u_1}\a_1^{\,n_1u_2} - u_2 \,p \,\sfrak \,\a_1^{\,k_1u_1}\,\a_1^{\,n_1(u_2-1)} + c\right]^m - a \left[ \a_2^{\,k_1u_1} + (-u_1)\, p\, \sfrak\, \a_2^{\,-k_1(-u_1-1)}\, \a_2^{-k_1} \a_1^{-n_1} \right] \a_2^{\,n_1u_2} \\
        \equiv~ &(\a_1+c + p\, \sfrak \,u_2\, \a_1\, \a_1^{-n_1})^m - a \a_2 + au_1p\,\sfrak \,\a_2 \,\a_1^{-n_1}, 
    \end{align*}
    where the last equivalence follows using the fact that $k_1 u_1 + n_1 u_2 = 1$. Applying Binomial expansion for power $m$ yields,
    \begin{align*}
        &(\a^{k_1} + c)^m - a \a^{n_1}, \\
        \equiv~ &(\a_1+c)^m + m\,u_2 \,p\, \sfrak \,\a_1^{\,1-n_1} (\a_1+c)^{m-1} - a \,\a_2 + a\,u_1\,p\,\sfrak\, \a_2 \,\a_1^{-n_1}, \\
        \equiv~ &\frac{m\,u_2\, p \,\sfrak \,\a_1^{\,1-n_1}\, a \a_2}{\a_1 + c} + a\,u_1\,p\sfrak \,\a_2\, \a_1^{-n_1},
    \end{align*}
    where the last inequality follows using \eqref{4.10:eqn} and the fact that $\a_1+c \not\equiv 0 \Mod{p^2}$. Substituting the value of $\a_1$, we get
    \begin{align}
        (\a^{k_1} + c)^m - a\, \a^{n_1}
        \equiv~ &a\,\a_2\, p \sfrak \,\a_1^{-n_1} \left[ m\,u_2 \frac{nc}{km-n} \left( \frac{kmc}{km-n} \right)^{-1} + u_1 \right], \nonumber \\
        \equiv~ &\frac{a\,\a_2 \,p \sfrak\, \a_1^{-n_1}}{k} (nu_2 + k u_1)\, \nonumber \\
        \equiv~ &p\,\sfrak\, \frac{at}{k}\, \a_2\, \a_1^{-n_1}, \label{4.11:eqn}
    \end{align}
    where the last inequality follows using the given fact $ku_1 + nu_2 = t$. Using the congruence \eqref{4.11:eqn} and the $\frac{at}{k} \a_2 \a_1^{-n_1}$ is non-zero modulo $p$, it is easy to observe that $(\a^{k_1} + c)^m - a \a^{n_1} \equiv 0 \Mod{p^2}$ if and only if $p \mid \sfrak$. This completes the proof of the proposition.
\end{proof}

\begin{proof}[Proof of Theorem~\ref{monogeneity of f(x)}]
Let $p$ be a prime that divides $\D_f$. In view of Lemma~\ref{dedekind}, we have $p \nmid [\mathbb{Z}_K : \mathbb{Z}[\theta]]$
if and only if $f(x) \not\in \langle p,\, g(x) \rangle^2$
for every monic polynomial $g(x) \in \mathbb{Z}[x]$ that is irreducible modulo $p$. Moreover, note that
$f(x) \not\in \langle p, g(x) \rangle^2$
whenever $\overline{g}(x)$ is not a repeated factor of $\bar{f}(x)$.\\[2mm]
\textbf{Case (i).} Suppose $p \mid c$ and $p \mid a$. In this case, $f(x) = (x^k + c)^m - a x^n \equiv x^{km} \pmod{p}.$ It is easily checked that $f(x) \in \langle p,\, x \rangle^2 $ if and only if $p^2 \mid c^m.$
Therefore, by Lemma~\ref{dedekind}, $p \nmid [\mathbb{Z}_K : \mathbb{Z}[\theta]]
\textit{ if and only if }
f(x) \notin \langle p,\, x \rangle^2 $ if and only if $p^2 \nmid c^m.$ Since $p \mid c$, condition $p^2 \nmid c^m$ can hold only when $m=1$ and $p^2 \nmid c$. This completes the proof in this case.\\[2mm]
\textbf{Case (ii).} Suppose that $p\mid c$, $p\nmid a$ and $n=1$.
Since $p\mid \D_f$, $p\mid c$ and $p\nmid a$, it follows from
\eqref{disc of f(x)} that $p\mid (km-1).$ Write $km-1=p^{j}s,
\, p\nmid s.$ In this situation, $f(x)\equiv x\bigl(x^{km-1}-a\bigr)
      \equiv x\bigl(x^{s}-a\bigr)^{p^{j}}
      \pmod{p}.$
Set $h(x)=x^{s}-a.$ Raising both sides to the power $p^{j}$ and using the Binomial theorem, we obtain the following.
\begin{equation}\label{eq:4.9}
x^{km-1}
= h(x)^{p^{j}} + a^{p^{j}} + p\,h(x)h_{1}(x),
\end{equation}
for some polynomial $h_{1}(x)\in\mathbb{Z}[x]$. Since $p\mid c$, we may expand $f(x)$ as
\begin{align}\label{eq:4.10}
f(x)
&= x^{km}-ax+\sum_{i=1}^{m-1}\binom{m}{i}x^{ki}c^{m-i} \nonumber \\
&= x(x^{km-1}-a)+m x^{k(m-1)}c + p^{2}h_{2}(x),
\end{align}
for some $h_{2}(x)\in\mathbb{Z}[x]$. Write $h(x)=g_{1}(x)\cdots g_{t}(x)+pH(x),$ where $g_{1}(x),\ldots,g_{t}(x)$ are distinct monic polynomials that are irreducible modulo $p$ and $H(x)\in\mathbb{Z}[x]$.
Substituting this expression into \eqref{eq:4.10} and using
\eqref{eq:4.9}, we obtain
\begin{align*}
f(x)
&= x\,h(x)^{p^{j}} + m c x^{k(m-1)} + (a^{p^{j}}-a)x
   + p\,h(x)h_{3}(x) + p^{2}h_{4}(x) \\
&= x\left(\prod_{i=1}^{t} g_i(x)+pH(x)\right)^{p^{j}}
   + x\bigl(m c x^{k(m-1)-1} + (a^{p^{j}}-a)\bigr)
   + p\,h(x)h_{3}(x) + p^{2}h_{4}(x),
\end{align*}
for some $h_{3}(x),\,h_{4}(x)\in\mathbb{Z}[x]$. Observe that $x,\,\bar g_{1}(x),\ldots,\bar g_{t}(x)$ are pairwise distinct irreducible factors of $\overline{f}(x)$.
Since $j\ge1$, it follows that $f(x)\in\langle p,\, g_i(x)\rangle^{2}$
for some $i$ if and only if the term $m c x^{k(m-1)-1} + (a^{p^{j}}-a)$ lies in $\langle p,\, g_i(x)\rangle^{2}$. Write $a^{p^{j}}-a=pa_{1},\,c=pc_{2}.$ Since $p\mid (km-1)$ implies that $p\nmid m$. Then $f(x)\in\langle p,\, g_i(x)\rangle^{2}$ for some $i$ if and only if either $p\mid c_{2}$ and $p\mid a_{1}$, or $p\nmid c_{2}$ and the polynomials $\bar m\,\bar c_{2}\,x^{k(m-1)-1}+\bar a_{1}
\quad\text{and}\quad
x^{s}-\bar a$ have a common root modulo $p$. Let $\eta$ be such a common root. Then $m c_{2}\eta^{k(m-1)-1}=-a_{1},
\,
\eta^{s}=a.$ Eliminating $\eta$, we obtain the common-root condition $a^ka_2^{km-1}=(mc)^{km-1}.$ Therefore, we conclude that $p$ does not divide $[\Z_k :\Z[\theta]]$ if and only if either $p\mid c_2$ and $p\nmid a_1$ or $p$ does not divide $c_2[a^ka_1^{\,km-1}-(-mac_2)^{\,km-1}]$. This completes the proof in the present case. \\[2mm]
\textbf{Case (iii).} Suppose that $p\mid c$ and $p\nmid a$. In this case,
$f(x)\equiv x^{n}\bigl(x^{km-n}-a\bigr)\pmod{p}.$ Since $n\ge2$, it follows that $x$ is a repeated root of $f(x)$ modulo $p$. Arguing as in Case~(i), we obtain $f(x)\notin\langle p,x\rangle^2
\quad \Longleftrightarrow \quad
p^{2}\nmid c^{m}.$ As $p\mid c$, this is possible only when $m=1$ and $p^{2}\nmid c$. Hence, we may assume throughout this case that $m=1$. Then $f(x)=x^{k}-ax^{n}+c$ and modulo $p$ we have $f(x)\equiv x^{n}\bigl(x^{k-n}-a\bigr).$ Let $\ell\ge0$ be the largest integer such that $p^{\ell}\mid(k-n)$.
\medskip
\emph{First, assume that $\ell=0$, i.e., \ $p\nmid(k-n)$.}
Since $p\mid c$ and $p\nmid a$, the reduction $\overline{f}(x)=x^{n}\bigl(x^{k-n}-\bar a\bigr)$ has $x$ as its only possible repeated irreducible factor modulo $p$. In this situation, it is easy to check that $f(x)\in\langle p,x\rangle^{2}
\textit{ if and only if }
p^{2}\mid c.$ Thus, when $\ell=0$, the desired conclusion follows immediately.\\
Now assume that $\ell\ge1$ and write $k-n=p^{\ell}s,
\quad p\nmid s.$ Set $h(x)=x^{s}-a.$ We may write $h(x)=g_{1}(x)\cdots g_{t}(x)+pH(x),$ where $g_{1}(x),\dots,g_{t}(x)$ are distinct monic polynomials that are irreducible modulo $p$ and $H(x)\in\mathbb{Z}[x]$.
Applying Lemma~\ref{binomial expansion} to $h(x)$, we obtain
\[
f(x)=x^{n}\Biggl[
\left(\prod_{i=1}^{t} g_i(x)+pH(x)\right)^{p^{\ell}}
+pT(x)\prod_{i=1}^{t} g_i(x)
+p^{2}U(x)
-a+a^{p^{\ell}}
\Biggr]+c,
\]
for some polynomials $T(x),U(x)\in\mathbb{Z}[x]$. Observe that $x,\bar g_{1}(x),\ldots,\bar g_{t}(x)$ are pairwise distinct
irreducible factors of $\overline{f}(x)$.
Since $\ell\ge1$, the first three terms inside the brackets are in
$\langle p,g_i(x)\rangle^{2}$ for each $1\le i\le t$.
Consequently, $f(x)\in\langle p,g_i(x)\rangle^{2}$ for some $i$ if and only if the remaining term $\bigl(a^{p^{\ell}}-a\bigr)x^{n}+c$
lies in $\langle p,\, g_i(x)\rangle^{2}$. Write $a^{p^{\ell}}-a=pa_{1},
\, c=pc_{2}.$
Then $f(x)\in\langle p,\,g_i(x)\rangle^{2}$ for some $i$ if and only if
either $p\mid a_{1}$ and $p\mid c_{2}$, or $p\nmid a_{1}$ and the
polynomials $\bar a_{1}x^{n}+\bar c_{2}
\quad\text{and}\quad
x^{k-n}-\bar a$ have a common root modulo $p$. Let $\gamma$ be such a common root. Then
\[
a_{1}\gamma^{n}+c_{2}=0,
\qquad
\gamma^{k}=a\gamma^{n}.
\]
Simplifying these relations and using $k=k_{1}t$ and $n=n_{1}t$, we obtain
\[
(-c_{2})^{\,k_{1}-n_{1}}=a^{\,n_{1}}a_{1}^{\,k_{1}-n_{1}}.
\]
Hence, the above polynomials have a common root modulo $p$ if and only if
\[
(-c_{2})^{\,k-n}=a^{\,n}a_{1}^{\,k-n}.
\]
By Lemma~\ref{dedekind}, we conclude that
$p\nmid[\mathbb{Z}_{K}:\mathbb{Z}[\theta]]$ if and only if either
$p\mid a_{1}$ and $p\nmid c_{2}$, or
$
p\nmid a_{1}c_2
\bigl[a^{n_{1}}a_{1}^{\,k_{1}-n_{1}}-(-c_{2})^{\,k_{1}-n_{1}}\bigr].$
This completes the proof in the present case.\\[2mm]
\textbf{Case (iv).} Suppose that $p\nmid c$ and $p\mid a$. Then $f(x) \equiv (x^{k}+c)^{m} \pmod{p}.$ There are two possibilities to consider, namely $m\ge 2$ and $m=1$.\\
\emph{First, assume that $m\ge 2$.}
Let $g(x)$ be an irreducible factor of $x^{k}+c$ over $\mathbb{Z}/p\mathbb{Z}$.
Then $f(x)\in\langle p,g(x)\rangle^{2}$ if and only if $p^{2}\mid a$.
Hence, by Lemma~\ref{dedekind}, we obtain $p\nmid [\mathbb{Z}_{K}:\mathbb{Z}[\theta]]
\quad \Longleftrightarrow \quad
p^{2}\nmid a.$\\
\emph{Now assume that $m=1$.}
In this case $f(x)=x^{k}-ax^{n}+c,$ and since $p\mid a$, we have $f(x)\equiv x^{k}+c \pmod{p}.$ Moreover, as $p\mid \D_f$, it follows from \eqref{disc of f(x)} that $p\mid k$. Write $k=p^{\ell}s \, \text{ with }\, p\nmid s.$ By the Binomial theorem, we then obtain $f(x)\equiv (x^{s}+c)^{\,p^{\ell}} \pmod{p}.$ Let $\bar g_{1}(x),\ldots,\bar g_{r}(x)$ be the factorization of
$x^{s}+\bar c$ over $\mathbb{Z}/p\mathbb{Z}$, where
$g_i(x)\in\mathbb{Z}[x]$ are monic, pairwise distinct and irreducible
modulo $p$. We may therefore write $x^{s}+c = g_{1}(x)\cdots g_{r}(x) + pH(x)$ for some polynomial $H(x)\in\mathbb{Z}[x]$. Set $h(x)=x^{s}+c$.
Then $f(x)=h(x^{p^{\ell}})-ax^{n}.$ Applying Lemma~\ref{binomial expansion} to $h(x)$, we obtain
\begin{equation}\label{eq:case-iii-expansion}
f(x)= \left(\prod_{i=1}^{r} g_i(x) + pH(x)\right)^{p^{\ell}}
   + pT(x)\prod_{i=1}^{r} g_i(x)
   + p^{2}U(x) + c + (-c)^{p^{\ell}} - ax^{n},
\end{equation} 
for some polynomials $T(x),U(x)\in\mathbb{Z}[x]$.
Since $\ell\ge 1$, the first three terms on the right-hand side of
\eqref{eq:case-iii-expansion} lie in the ideal
$\langle p, g_i(x)\rangle^{2}$ for each $1\le i\le r$.
Therefore,
$f(x)\in\langle p, g_i(x)\rangle^{2}$ for some $i$
if and only if $-ax^{n}+c+(-c)^{p^{\ell}} = p(a_{2}x^{n}+c_{1})$
belongs to $\langle p, g_i(x)\rangle^{2}$. It follows that
$p(a_{2}x^{n}+c_{1})\in\langle p, g_i(x)\rangle^{2}$
for some $i$ if and only if either $p\mid a_{2}$ and $p\mid c_{1}$,
or $p\nmid a_{2}$ and the polynomials
$\overline{a}_{2}x^{n}+\overline{c}_{1}$ and $x^{s}+\overline{c}$
have a common root modulo $p$. Let $\gamma$ be such a common root.
Then $a_{2}\gamma^{n}+c_{1}=0
\quad \text{and} \quad
\gamma^{k}+c=0.$ Writing $k=k_{1}t$ and $n=n_{1}t$, we obtain
\[
a_{2}(\gamma^{t})^{n_{1}}=-c_{1}
\quad \text{and} \quad
(\gamma^{t})^{k_{1}}=-c.
\]
The rise of the first equation to the power $k_{1}$ and the second to the power $n_{1}$ yields the following equation.
\[
a_{2}^{k_{1}}(\gamma^{t})^{n_{1}k_{1}}
=(-c_{1})^{k_{1}}
\quad \text{and} \quad
(\gamma^{t})^{k_{1}n_{1}}
=(-c)^{n_{1}}.
\]
Comparing these expressions, we conclude that $a_{2}^{k_{1}}(-c)^{n_{1}} = (-c_{1})^{k_{1}}.$ Therefore, the polynomials
$\overline{a}_{2}x^{n}+\overline{c}_{1}$ and $x^{s}+\overline{c}$
are coprime modulo $p$ if and only if $p\nmid \bigl(a_{2}^{k_{1}}(-c)^{n_{1}}-(-c_{1})^{k_{1}}\bigr).$ Therefore, by Lemma~\ref{dedekind}, we obtain $p\nmid [\mathbb{Z}_{K}:\mathbb{Z}[\theta]]$ if and only either $p\mid a_2$ and $p\nmid c_1$ or $p\nmid a_2\bigl[a_{2}^{k_{1}}(-c)^{n_{1}}-(-c_{1})^{k_{1}}\bigr]$.\\[2mm]
\textbf{Case (v).} Suppose $p \nmid ca$ and $p \mid k$. By Theorem~\ref{disc of f(x)}, we have 
$p \mid (\mathcal{C} - \mathcal{E})$, and hence $p \mid n(km - n).$
Since $p \nmid a$ and $p \nmid c$, we obtain $p \mid n$. Write $k = p^{j}s' \quad \text{and} \quad n = p^{j}s,$ with $j \ge 1$ and $p \nmid \gcd(s,s')$. Then
\[
f(x) \equiv \Bigl( (x^{s'} + c)^m - a x^s \Bigr)^{p^{j}} \pmod{p}.
\]
 Denote  $ (x^{s'} + c)^m - a x^s$ by $h(x)$ so that $f(x)=h(x^{p^j})$. Then one can easily check that $f(x)\equiv h(x)^{p^j} \pmod{p}$. Write $h(x) = g^{e_1}_1(x)\cdots g^{e_d}_d(x) + ph_1(x)$, $e_i>0$, where $g_1(x), \ldots,$ $g_d(x)$ are monic polynomials which are distinct as well as irreducible modulo $p$ and $h_1(x) \in \Z[x]$. Using Lemma \ref{lemma:3.6}, we have
\begin{align*}
    f(x) &=  h(x)^{\,p^{j}}+(a^{\,p^{j}} - a)\,x^{n} - p m\, t(x)(x^{k} + c)^{\,m-1}+ p\,h(x)\,h_{1}(x) + p^{2}\,h_{4}(x)\\
    &=\left(\prod\limits_{i=1}^{d}g^{e_i}_i(x)\right)^{\,p^j}+(a^{\,p^{j}} - a)\,x^{n} - p m\, t(x)(x^{k} + c)^{\,m-1}+ p\,\left(\prod\limits_{i=1}^{d}g^{e_i}_i(x)\right)\,h_{1}(x) + p^{2}\,h_{4}(x)
\end{align*}
for some polynomials $h_1(x),\,h_4(x)\in\Z[x]$. Write $f(x)=\left(g_1(x)^{e_1} \cdots g_d(x)^{e_d}\right)^{p^j}+p M(x)$ for some $M(x)\in\Z[x].$ Since  $j>0$, by Lemma \ref{dedekind}, we see that $p$ does not divide $[\Z_K :\Z[\theta]]$ if and only if $\overline{M}(x)$ is coprime to $\overline{h}(x)$, which holds if and only if the polynomial $\frac{1}{p}[(a^{\,p^{j}} - a)\,x^{n} - p m\, t(x)(x^{k} + c)^{\,m-1}]$ is coprime to $h(x)$ modulo $p$. This proves the theorem in this case.\\[2mm]
\textbf{Case (vi).} Suppose $p \nmid cak$ and $p \mid m$. By Theorem~\ref{disc of f(x)}, we have $p \mid n$. Write $n = p^{\ell}s \quad \text{and} \quad m = p^{\ell}s',$ where $\ell \ge 1$ and $p \nmid \gcd(s,s')$. Define $h(x) = (x^{k}+c)^{s'} - a x^{s}.$ Then it is easy to check that $f(x) \equiv h(x)^{\,p^{\ell}} \pmod{p}.$
We first expand $(h(x)+a x^{s})^{p^{\ell}}$ using the binomial theorem:
\[
(h(x)+a x^{s})^{p^{\ell}} = h(x)^{p^{\ell}} + pH(x) + a^{p^{\ell}}x^{n},
\]
for some polynomial $H(x)\in\mathbb{Z}[x]$. Since $h(x)=(x^{k}+c)^{s'}-a x^{s}$, it follows that
\[
(x^{k}+c)^{m}
= h(x)^{p^{\ell}} + p\,h(x)H(x) + (a^{p^{\ell}}-a+a)x^{n}.
\]
Consequently, we may write the following.
\begin{equation}\label{eq:5.1}
f(x) = h(x)^{p^{\ell}} + p\,h(x)h_{1}(x) + (a^{p^{\ell}}-a)x^{n},
\end{equation}
for some polynomial $h_{1}(x)\in\mathbb{Z}[x]$. Next, write $h(x)=g_{1}(x)^{e_{1}}\cdots g_{d}(x)^{e_{d}} + p\,h_{2}(x),
\quad e_i>0,$ where $g_{1}(x),\ldots,g_{d}(x)$ are distinct monic polynomials that are irreducible modulo $p$ and $h_{2}(x)\in\mathbb{Z}[x]$. Substituting this expression into \eqref{eq:5.1}, we obtain
\begin{align*}
f(x)
&= \left(\prod_{i=1}^{d} g_{i}(x)^{e_{i}} + p\,h_{2}(x)\right)^{p^{\ell}}
   + p\,h(x)h_{1}(x) + (a^{p^{\ell}}-a)x^{n} \\
&= \left(\prod_{i=1}^{d} g_{i}(x)^{e_{i}}\right)^{p^{\ell}}
   + p\,h(x)h_{1}(x) + p^{2}h_{3}(x) + (a^{p^{\ell}}-a)x^{n},
\end{align*}
for some polynomial $h_{3}(x)\in\mathbb{Z}[x]$. Thus, we may write 
$
f(x)=\left(g_{1}(x)^{e_{1}}\cdots g_{d}(x)^{e_{d}}\right)^{p^{\ell}} + pM(x),
$ for some $M(x)\in\mathbb{Z}[x]$. Since $\ell>0$, Lemma~\ref{dedekind} implies that $p \nmid [\mathbb{Z}_{K}:\mathbb{Z}[\theta]]$ if and only if $\overline{M}(x)$ is co-prime to $\overline{h}(x)$ modulo $p$.
This holds if and only if the polynomial $a_1 x^{n}$ is co-prime to $h(x)$ modulo $p$,
which is equivalent to condition $p\nmid a_1$, where $a_1=\frac{a^{p^{\ell}}-a}{p}.$ This condition also simplifies to $p^{2}\nmid (a^{p}-a)$. This completes the proof in this case.\\[2mm] 
\textbf{Case (vii).} Now consider the last  case where $p \nmid ackm $. Furthermore, as $p\mid \D_f$ and $p\nmid km$ and $k_1m-n_1 \neq 0$, it follows that $p \nmid n(km-n)$. We claim that there exists an integer $\alpha$ such that the polynomial $x^t - \bar{\alpha}$ is the product of all distinct monic repeated irreducible factors of $\bar{f}(x)$ over $\Z/p\Z$. 
	
	Let $\alpha_1$ and $\alpha_2$ be as defined in Proposition \ref{2.06} and $\alpha = \alpha_1^{u_1} \alpha_2^{u_2} \pmod{p^2}$. Suppose $\beta$ is any repeated root of $\bar f (x)= (x^k+\bar c)^m -\bar a x^n$ in algebraic closure of $\Z/p\Z.$ Then using the derivation similar to those for \eqref{0.009}, we have $\beta^k \equiv \alpha_1 \pmod{p}$. Clearly, $\beta^k \in \Z/p\Z$ and $f(\beta)\equiv 0 \pmod{p}$ imply that
	\begin{align*}
		\beta^n \equiv a^{-1} (\beta^k +c)^m \equiv \alpha_{2} \in {\Z/p\Z}.
	\end{align*}
	As ${\gcd(n,k)}=t$ so there exist $u_1,u_2\in\Z$ such that $ku_1+nu_2=t.$ Using this we obtain $\beta^t = (\beta^k)^{u_1}(\beta^n)^{u_2} \in {\Z/p\Z}$, i.e., $\beta^t \equiv \alpha_1^{u_1} \alpha_{2}^{u_2}\pmod{p}$. Thus, we have proved that any repeated root of $\bar f(x)$ is a root of $ x^t-\bar{\alpha}$. 
	Now using Proposition \ref{2.06}, if $\beta_1$ is a root of $x^t-\bar{\alpha}$  with $\beta_1 \equiv \alpha_1^{u_1} \alpha_{2}^{u_2} \pmod{p}$, then $\beta_1$ is a repeated root of $\bar{f}(x)$ if and only if $p\mid (\calC - \calE)$. Hence, we conclude that every root of $x^t-\bar{\alpha}$ is a repeat root of $f(x)$ modulo $p$. We have shown that every root of $x^t - \bar{\alpha}$ is a repeated root of $f(x)$ modulo $p$.  
%Since $\deg{g(x)}=t\geq 1$, hence $g(x)$ has a root in algebraic closure of $\Z/p\Z$. Let $\beta_0$  be a root of $g(x)$ in algebraic closure of $\Z/p\Z$. 
Since $t=\gcd(n,k)$, it is easy to see that
	\begin{equation}\label{3.20}
		f(x)= ((x^t)^{k_1}+c)^m -a(x^t)^{n_1} = (x^t-\alpha)q(x) +  (\alpha^{k_1}+c)^m - a \alpha^{n_1},
	\end{equation}
	for some $q(x) \in \Z[x^t]$. As $x^t - \bar{\alpha}$ divides $\bar{f}(x)$, we have $\bar{f}(x) = (x^t-\bar{\alpha})\bar{q}(x)$. Let $\bar{f}(x)=\bar g_1(x)^{e_1}\cdots \bar g_t(x)^{e_t}$ be the factorization of $\bar{f}(x)$ into a product of  powers of distinct irreducible polynomials over $\Z /p\Z$ with each $g_{i}(x)\in Z[x]$ monic. If necessary, after renaming assume that $e_{i}>1$ for $1\leq i\leq t_{1}$ and $e_i=1$ for $t_1< i \leq t$. Then  $x^t-\bar{\alpha}=\prod\limits_{i=1}^{t_1}\bar g_i(x)$. Write\\\\
	$x^t-\alpha=\prod\limits_{i=1}^{t_1} g_i(x)+ph_1(x), \quad q(x)=\prod\limits_{i=1}^{t_1}g_i(x)^{e_i-1}~~ \prod\limits_{i=t_1+1}^{t}g_i(x)+ph_2(x)$ \\\\ for some $h_1(x),h_2(x)\in\Z[x]$. Substituting from the above equation in $\eqref{3.20}$, we  have
	\begin{align*}
		f(x)&=\prod_{i=1}^{t}g_i(x)^{e_i}
+ph_1(x)\prod_{i=1}^{t_1}g_i(x)^{e_i-1}	\prod_{i=t_1+1}^{t}g_i(x)+ph_2(x)\prod_{i=1}^{t_1}g_i(x) \nonumber\\ &+~p^2h_1(x)h_2(x) + (\alpha^{k_1}+c)^m - a \alpha^{n_1}.
	\end{align*}
 Clearly, each summand on the right-hand side of the above equation except possibly $(\alpha^{k_1}+c)^m - a \alpha^{n_1}$ belongs to $\langle p, g_i(x) \rangle^2$  for $1\leq i\leq t_1$. So $f(x)$ $\in \langle p, g_i(x) \rangle^2$ for some $i$ $1\leq i\leq t_1$ if and only if $p^2$ divides $(\alpha^{k_1}+c)^m - a \alpha^{n_1}$. As $\bar f(x)$ is not divisible by $\bar g_i(x)^2$ for $t_1< i\leq t$, it is clear that $f(x)\not\in  \langle p, g_i(x) \rangle^2$ for such $i$. So $f(x)\not \in  \langle p, g_i(x) \rangle^2$ for any $i$, $1 \leq i \leq t$ if and only if $p^2\nmid ((\alpha^{k_1}+c)^m - a \alpha^{n_1})$. To prove this case, it only remains to prove that $(\alpha^{k_1}+c)^m - a \alpha^{n_1} \equiv 0 \pmod{p^2}$ if and only if $\calC - \calE \equiv 0\pmod{p^2}$. This follows from Propositions \ref{2.06} and \ref{4.4:prop} and hence the theorem.
\end{proof}
\begin{proof}[Proof of Corollary~\ref{6.1lemma}]
Let $f(x) = (x^{q^j} - 1)^m - a x^n$, and let $t$ be an integer such that $t \equiv 1 \pmod q$. Consider the shifted polynomial
$f(x+t) = ((x+t)^{q^j} - 1)^m - a(x+t)^n = \sum_{i=0}^{mp^j} a_i x^i$. The constant term of $f(x+t)$ is
$a_0 = (t^{q^j} - 1)^m - a t^n$. Since $t \equiv 1 \pmod q$, we have $t^{q^j} \equiv 1 \pmod q$, and therefore $v_q(t^{q^j} - 1) \ge 1$. It follows that $v_q((t^{q^j} - 1)^m) \ge m \ge 2$.
Moreover, by expanding $(x+t)^{q^j}$ and $(x+t)^n$ using the Binomial theorem, and using the fact that $q$ divides all intermediate binomial coefficients of $(x+t)^{q^j}$, one checks that $q$ divides the coefficient of all $x^j$ for $0 \leq j < mq^j$. Since $q \mid\mid a$, we have $v_q(a) = 1$, and because $q \nmid t$, it follows that $v_q(a t^n) = 1$. Therefore, $v_q(a_0) = v_q((t^{p^j} - 1)^m - a t^n) = 1$. Hence, $f(x+t)$ is $q$-Eisenstein, and consequently $f(x)$ is irreducible. Similarly, $f(x) = (x^{q^j} + 1)^m - a x^n$ is also irreducible. Now we address the monogeneity of $f(x)$. Since $c=\pm 1$, the first three cases of Theorem~\ref{monogeneity of f(x)} do not occur. As $m \geq 2$, case~(iv)(b) of Theorem~\ref{monogeneity of f(x)} also does not occur. Furthermore, the polynomial under consideration has $k=q^j$ and $q$ is a divisor of $a$, case~(v)
of Theorem~\ref{monogeneity of f(x)} also does not occur. This completes the proof.
\end{proof}

\section[Monogeneity of Composition of f(x) with Arbitrary Polynomial g(x)]{Monogeneity of Composition of \texorpdfstring{$f(x)$}{f(x)} with Arbitrary Polynomial \texorpdfstring{$g(x)$}{g(x)}}
To determine all possible primes that may divide the index of $f \circ g$, 
we first compute its discriminant.

Note that the discriminant of a monic polynomial $f(x) \in \mathbb{Z}[x]$ of degree $n$ 
is related to the resultant\footnote{\noindent If $\lambda_1,\ldots,\lambda_n$ and
$\mu_1,\ldots,\mu_m$ are the roots of $f$ and $g$, respectively, then the
resultant of $f$ and $g$ is given by
\[
\Res(f,\,g)
=
a^m b^n
\prod_{\substack{1 \le i \le n \\ 1 \le j \le m}}
(\lambda_i-\mu_j),
\] where $a$ and $b$ are the leading coefficients of $f$ and $g$, respectively.} $R(f, f')$ by the formula
\[
D_f = (-1)^{\frac{n(n-1)}{2}} R(f, f'),
\]
where $f^{\prime}(x)$ denotes the derivative of $f(x)$. 
\begin{proposition}\label{prop:disc_comp}
Let $f(x)$ and $g(x)$ be polynomials of degrees $n$ and $m$, respectively, 
with leading coefficients $a$ and $b$. Then
\begin{equation}\label{disc f of g}
\D_{f \circ g} 
= \pm\, a^{\,m-1}\, b^{\,n(mn - m - 1)} \,
\D_f^{m} \,
\operatorname{Res}\bigl(f\circ g,\, g^{\prime}\bigr).
\end{equation}
\end{proposition}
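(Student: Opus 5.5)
We compute $\D_{f\circ g}$ through the resultant formula $\D_h=(-1)^{\deg h(\deg h-1)/2}\,\mathrm{lc}(h)^{-1}\operatorname{Res}(h,h')$, valid for non-monic $h$. Apply it to $h=f\circ g$, which has degree $mn$ and leading coefficient $ab^{n}$. The chain rule gives $h'(x)=f'(g(x))\,g'(x)$. Since the resultant is multiplicative in each argument,
\[
\operatorname{Res}(f\circ g,\,(f\circ g)')=\operatorname{Res}(f\circ g,\,f'\circ g)\cdot\operatorname{Res}(f\circ g,\,g').
\]
The second factor already appears in \eqref{disc f of g}, so the work lies in the first factor.

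For $\operatorname{Res}(f\circ g, f'\circ g)$, let $\lambda_1,\dots,\lambda_n$ be the roots of $f$. Then $f(g(x))=a\prod_i(g(x)-\lambda_i)$, and the roots of $f\circ g$ are the roots $\xi$ of the polynomials $g(x)-\lambda_i$ over all $i$. We use the product formula $\operatorname{Res}(F,G)=\mathrm{lc}(F)^{\deg G}\prod_{F(\xi)=0}G(\xi)$ with $F=f\circ g$ and $G=f'\circ g$, which has degree $m(n-1)$ and leading coefficient $nab^{n-1}$. This gives
\[
\operatorname{Res}(f\circ g,f'\circ g)=(ab^n)^{m(n-1)}\prod_{i}\prod_{g(\xi)=\lambda_i} f'(\lambda_i)=(ab^n)^{m(n-1)}\prod_i f'(\lambda_i)^{m}.
\]
Each $\lambda_i$ has exactly $m$ preimages under $g$, counted with multiplicity. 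Finally, $\prod_i f'(\lambda_i)=a^{-(n-1)}\operatorname{Res}(f,f')=\pm a^{-(n-1)}\cdot a\,\D_f$, so $\prod_i f'(\lambda_i)^m=\pm a^{\,m-mn+m}\D_f^{\,m}$.

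Now collect the powers. The power of $a$ is $a^{m(n-1)}\cdot a^{m(2-n)}\cdot a^{-1}=a^{m-1}$, where the last factor comes from dividing by $\mathrm{lc}(h)$. The power of $b$ is $b^{nm(n-1)}\cdot b^{-n}=b^{n(mn-m-1)}$. Both agree with the claimed formula, and the signs are absorbed into $\pm$.

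The main obstacle is purely bookkeeping: one must handle the leading coefficients correctly, using the normalisation $\D_h=\pm\,\mathrm{lc}(h)^{-1}\operatorname{Res}(h,h')$ and the asymmetric product formula for resultants. One must also justify the multiplicity count, namely that $g(x)-\lambda_i$ has exactly $m$ roots with multiplicity, each contributing $f'(\lambda_i)$. If roots repeat, the identity still holds: either argue by continuity/genericity, or note that all expressions are polynomial identities in the coefficients, so it suffices to verify them on the dense set where $f\circ g$ is separable.
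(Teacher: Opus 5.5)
The paper states Proposition~\ref{prop:disc_comp} without proof. It is recorded as a known formula just before Proposition~\ref{2.5:pro}, so there is no argument of the authors to compare yours with. Your derivation is correct and supplies the missing justification. The ingredients are:
\begin{enumerate}
\item the non-monic normalisation $\D_h=(-1)^{mn(mn-1)/2}(ab^n)^{-1}\operatorname{Res}(h,h')$ for $h=f\circ g$;
\item multiplicativity of $\operatorname{Res}(h,\cdot)$ applied to $h'=(f'\circ g)\,g'$;
\item the evaluation $\operatorname{Res}(h,f'\circ g)=(ab^n)^{m(n-1)}\prod_i f'(\lambda_i)^m$;
\item the identity $\prod_i f'(\lambda_i)=\pm a^{2-n}\D_f$.
\end{enumerate}
Together these give exactly $\pm a^{m-1}b^{n(mn-m-1)}\D_f^{m}\operatorname{Res}(f\circ g,g')$, and your exponent bookkeeping checks out. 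Your product formula $\operatorname{Res}(F,G)=\mathrm{lc}(F)^{\deg G}\prod_{F(\xi)=0}G(\xi)$ agrees with the paper's footnote convention $\Res(f,g)=a^mb^n\prod(\lambda_i-\mu_j)$.

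The step that really matters is using $\mathrm{lc}(h)^{-1}\operatorname{Res}(h,h')$ rather than the monic formula $D_f=(-1)^{n(n-1)/2}R(f,f')$ quoted in the paper. The factors $a^{m-1}$ and $b^{n(mn-m-1)}$ arise purely from this leading-coefficient bookkeeping.

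Two minor remarks.
\begin{itemize}
\item Your closing appeal to continuity or genericity is unnecessary. The product formula for the resultant already holds with roots counted with multiplicity. Moreover, the factorisation $f(g(x))=a\prod_i\bigl(g(x)-\lambda_i\bigr)$ over $\overline{\Q}$, with each $g(x)-\lambda_i$ of degree $m$ and leading coefficient $b$, identifies the root multiset of $f\circ g$ directly. So repeated roots of $f$ or of $f\circ g$ need no separate treatment.
\item You implicitly use characteristic $0$ and $m,n\ge 1$, so that $\deg(f'\circ g)=m(n-1)$ and $\deg g'=m-1$, consistent with $\deg h'=mn-1$. Over $\Q$ this is automatic. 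Also, for $n=1$ the exponent $n(mn-m-1)=-1$ is negative, so the proposition is an identity in $\Q$ rather than in $\Z$. Your computation covers this case without change, since $\D_f=1$ there.
\end{itemize}
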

To prove Theorem~\ref{monogeneity of f(g(x))}, we first establish the following proposition, which characterizes the prime divisors of \( [\mathbb{Z}_K : \mathbb{Z}[\theta]] \) for composition $f\circ g$.
\begin{proposition}\label{2.5:pro}
    Let $f(x) = (x^k+c)^m-ax^n\in \mathbb{Z}[x]$ be irreducible over $\mathbb{Q}$ and let  $g(x)\in \Z[x]$ be a polynomial of degree $d$ such that \(T(x) := f(g(x))\) is irreducible over $\mathbb{Q}$.  Let $K = \mathbb{Q}(\theta)$, where $\theta$ is a root of $T(x)$. Assume that $\rad(k)\mid ca$ and that, for every prime $p \in \scrE_f$, we have $p \mid g(0)$, $p \nmid d$, and the polynomial $g(x)$ satisfies \( g'(x) \equiv d x^{d-1} \pmod{p}.\) Furthermore, in the case $n = 1$, we assume that for every prime $p$ satisfying 
$p \mid \D_f$, $p \mid c$, and $p \nmid a$, one has $p \nmid \D_g$.
 Then, the following statements are equivalent:
    \begin{enumerate}[label=\textup{(\roman{enumi})}]
    \item  If $p\mid c$  and $p\mid a$, then $m=1$ and $p^2\nmid c$;
    \item  If $p\mid c,~~p\nmid a$ and $n=1$  with $j\geq 1$ as the highest power of $p$ dividing $km-1$, then either $p\mid c_2$ and $p\nmid a_1$ or $p\nmid c_2[a^ka_1^{\,km-1}-(-mac_2)^{\,km-1}]$, where $c_2=\frac{c}{p}$ and $a_1=\frac{a^{p^j}-a}{p}$;
    \item  If $p\mid c,~~p\nmid a$ and $n\geq 2$ with $\ell\geq 0$ as the highest power of $p$ dividing $k-n$, then $m=1$ and  either
$p\mid a_{1}$ and $p\nmid c_{2}$, or $p\nmid a_{1}c_{2}^{\,n-1}
\bigl[a^{n_1}a_{1}^{\,k_1-n_1}-(-c_{2})^{\,k_1-n_1}\bigr]$, where $c_2=\frac{c}{p}$ and $a_1=\frac{a^{\,p^{\ell}}-a}{p}$;
    \item  If $p\nmid c$ and $p\mid a$ and further;
    \begin{itemize}
        \item[\textup{(a)}] If $m\geq 2$, then $p^2\nmid a$;
        \item [\textup{(b)}] If $m=1$ with $\ell\geq 1$ as the highest power of $p$ dividing $k$, then either $p\mid a_2$ and $p\nmid c_1$ or $p\nmid a_2\bigl[a_{2}^{k_{1}}(-c)^{n_{1}}-(-c_{1})^{k_{1}}\bigr]$, where $a_2=-\frac{a}{p}$ and $c_1=\frac{(-c)^{\,p^{\ell}}+c}{p}$;
    \end{itemize}
     \item If $p\nmid cak$ and $p\mid m$, then $p \mid n$ and $p^2\nmid (a^{p}-a)$;
    \item  If $p\nmid cakm$, then $p^2\nmid (\calC-\calE).$
	\end{enumerate}
\end{proposition}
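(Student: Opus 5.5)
The statement should be read as follows: a prime $p$ dividing $\D_f$ does not divide $[\Z_K:\Z[\theta]]$ if and only if $p$ satisfies whichever of (i)--(vi) applies to it. Case~(v) of Theorem~\ref{monogeneity of f(x)} ($p\nmid ca$, $p\mid k$) no longer appears because $\rad(k)\mid ca$ rules it out.

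I will transport the proof of Theorem~\ref{monogeneity of f(x)} through the substitution $x\mapsto g(x)$. The tool is Lemma~\ref{dedekind}(iii), which says $p\nmid[\Z_K:\Z[\theta]]$ iff $T(x)\notin\langle p,\psi(x)\rangle^2$ for every monic $\psi$ irreducible mod $p$. Proposition~\ref{prop:disc_comp} gives
\[
\D_T=\pm\,\D_f^{\,d}\operatorname{Res}(T,g'),
\]
so the primes dividing $\D_f$ are exactly the primes to be tested. For such a prime the argument in each case of Theorem~\ref{monogeneity of f(x)} produces an explicit decomposition
\[
f(x)=\prod_i g_i(x)^{e_i}+p\,M(x)
\]
with an explicit $M$. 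Substituting $g$ gives $T(x)=\prod_i g_i(g(x))^{e_i}+p\,M(g(x))$. The aim is to show that, for each irreducible factor $\bar\psi$ of $\bar g_i(\bar g(x))$ with $e_i\ge 2$, the condition $T\in\langle p,\psi\rangle^2$ is equivalent to $f\in\langle p,g_i\rangle^2$.

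The transfer works once $\bar g_i(\bar g(x))$ is squarefree mod $p$. In that case $\bar\psi$ divides $\overline{M}(\bar g)$ iff $\bar g_i$ and $\overline M$ have a common root, since roots of $\bar\psi$ map under $\bar g$ to roots of $\bar g_i$. This reduces each condition (i)--(vi) to the corresponding condition of Theorem~\ref{monogeneity of f(x)}. I will treat the primes in three groups, ordered by the hypotheses they use.

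\emph{(a) Primes $p\mid ca$, cases (i), (iii), (iv).} Here the repeated factor of $\bar f$ is $x$ or a factor of $x^k+\bar c$. In cases (i) and (iii) the criterion is $p^2\mid c^m$ (resp.\ $p^2\mid c$), which is a statement about the constant term. I expect this to persist under composition after checking $T\in\langle p,\psi\rangle^2$ for $\psi\mid g$. The constant-term computation uses $g(\psi\text{-root})=0$, and the factor $g(x)^n$ lies in $\langle p,\psi\rangle^2$ when $n\ge 2$. When $n=1$ the hypothesis $p\nmid\D_g$ makes $\bar g$ squarefree, so case (ii) transfers.

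\emph{(b) Primes with $p\nmid cak$, $p\mid m$, case (v).} Here $\overline M$ is a unit times $x^n$. I will check that this reduction survives: coprimality of $\overline M(\bar g)=\bar a_1\bar g^{\,n}$ with $\bar h(\bar g)$ follows because $\bar h(0)=\bar c^{\,s'}\neq0$.

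\emph{(c) Primes in $\scrE_f$, case (vi).} By the proof of Theorem~\ref{monogeneity of f(x)}, the repeated part of $\bar f$ is $x^t-\bar\alpha$ with $\bar\alpha\ne0$. We have $T\equiv(g^t-\alpha)q(g)+\bigl((\alpha^{k_1}+c)^m-a\alpha^{n_1}\bigr)$, and I need $\bar g(x)^t-\bar\alpha$ squarefree. Its derivative is $t\,\bar g^{\,t-1}\bar g'=t d\,x^{d-1}\bar g^{\,t-1}$, using $g'\equiv dx^{d-1}$. A common root would force $x=0$ or $\bar g=0$. But $\bar g(0)=0$ gives $\bar g(0)^t-\bar\alpha=-\bar\alpha\neq0$, and $t\,d\not\equiv0$ since $p\nmid km$ and $p\nmid d$. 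So squarefreeness holds, and the criterion remains $p^2\nmid(\alpha^{k_1}+c)^m-a\alpha^{n_1}$, which is $p^2\nmid(\calC-\calE)$ by Propositions~\ref{2.06} and \ref{4.4:prop}. The same argument handles the $m=1$, $p\mid c$ primes of $\scrE_f$ arising in case (iii), where the relevant factor is $x^{s}-\bar a$ composed with $g$.

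I expect the main obstacle to be group (a). There the factors $\bar g_i(\bar g(x))$, for instance $\bar g(x)$ itself, need not be squarefree, and no hypothesis on $g$ is imposed at those primes except when $n=1$. My first attempt will show directly that the obstruction term there is a constant ($c$, $c_2$, or $a_2x^n+c_1$ evaluated at $g$) whose membership in $\langle p,\psi\rangle^2$ does not depend on the multiplicity of $\psi$ in $\bar g$. If that fails, I would restrict the claimed equivalence in those cases to the squarefree situation.
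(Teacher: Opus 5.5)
There is a genuine gap in part (c), and it comes from misplacing where the hypotheses on $g$ are needed. You prove that $\bar g(x)^t-\bar\alpha$ is squarefree and then conclude that the only obstruction is $p^2\mid(\alpha^{k_1}+c)^m-a\alpha^{n_1}$. That conclusion actually requires something else: $\overline{T}$ must have no repeated irreducible factor other than the divisors of $\bar g^{\,t}-\bar\alpha$. You never check this.

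Suppose $\gamma$ is a \emph{simple} root of $\bar f$ at which $\bar g-\gamma$ has a double root $\beta$. Then $\beta$ is a repeated root of $\overline{T}$. For its minimal polynomial $\psi$, the condition $T\in\langle p,\psi\rangle^2$ is a new condition depending on $g$, unrelated to $\calC-\calE$.

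Excluding this is exactly what the hypotheses $p\mid g(0)$, $p\nmid d$, $g'\equiv dx^{d-1}$ are for, and it is how the paper uses them. Every root $\beta$ of $\overline{T}$ is nonzero, because $\overline{T}(0)=\bar f(\bar g(0))=\bar f(0)=\bar c^{\,m}\neq 0$. Also $\overline{T'}(\beta)=\bar d\,\beta^{d-1}\,\overline{f'}(\bar g(\beta))$. Hence $\beta$ is a repeated root of $\overline{T}$ if and only if $\bar g(\beta)$ is a repeated root of $\bar f$, i.e.\ $\bar g(\beta)^t=\bar\alpha$.

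The root cause is your transfer principle, which has the roles reversed. Take $e_i\ge 2$ and any irreducible factor $\psi$ of $\bar g_i(\bar g)$. Then $\prod_j g_j(g)^{e_j}\in\langle p,\psi\rangle^{e_i}\subseteq\langle p,\psi\rangle^2$, so
$T\in\langle p,\psi\rangle^2 \iff \bar\psi\mid\overline{M}(\bar g)\iff \bar g_i\mid \overline{M}$.
The last equivalence holds because $\bar g(\beta)$ is a root of the irreducible $\bar g_i$ for every root $\beta$ of $\bar\psi$. None of this depends on the multiplicity of $\psi$ in $\bar g_i(\bar g)$. In case (vi) the same reasoning applies, since $(x^t-\bar\alpha)\mid\bar q$ gives $(g^t-\alpha)q(g)\in\langle p,\psi\rangle^2$.

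Consequences of this:
\begin{itemize}
\item The squarefreeness you prove in (c) is superfluous.
\item Your ``first attempt'' in group (a) does succeed, so no restriction of the statement is needed. This matters, because in case (ii) the hypothesis $p\nmid\D_g$ does not make $\bar g^{\,s}-\bar a$ squarefree.
\item Squarefreeness after composition, with pairwise coprimality being automatic, is needed only for the factors with $e_i=1$. These are $x$ in case (ii), handled by $p\nmid\D_g$ as you say; $x^{k-n}-\bar a$ in case (iii) with $\ell=0$, where your derivative argument is correct; and the simple factors of $\bar f$ in case (vi), which is the missing step above.
\end{itemize}

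Finally, $\D_T=\pm\D_f^{\,d}\operatorname{Res}(T,g')$ does not make the primes of $\D_f$ the only ones relevant to $[\Z_K:\Z[\theta]]$. The proposition simply restricts attention to them.
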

\begin{proof}
\textbf{Case (i).} Suppose that $p\mid c$ and $p\mid a$. In this case,
$T(x)\equiv g(x)^{km}\pmod{p}$. Let $h(x)$ be an irreducible factor of
$g(x)$ modulo $p$. Since $km>1$, one easily checks that $T(x)\in\langle p,h(x)\rangle^{2}$ if and only if $p^{2}\mid c^{m}.$ Hence, by Lemma~\ref{dedekind}, we have
\[
p\nmid[\mathbb{Z}_{K}:\mathbb{Z}[\theta]]
\quad \text{if and only if} \quad
p^{2}\nmid c^{m}.
\]
Since $p\mid c$, the condition $p^{2}\nmid c^{m}$ can occur only when
$m=1$ and $p^{2}\nmid c$.\\[2mm]
\textbf{Case (ii).} Suppose that $p\mid c$, $p\nmid a$, and $n=1$.
Since $p\mid \D_f$, $p\mid c$, and $p\nmid a$, it follows from
\eqref{disc of f(x)} that $p\mid (km-1)$.
Write $km-1=p^{j}s$ with $p\nmid s$.
In this situation,
\[T(x)\equiv g(x)\bigl(g(x)^{km-1}-a\bigr)
\equiv g(x)\bigl(g(x)^{s}-a\bigr)^{p^{j}} \pmod{p}.\]
By hypothesis, $p\nmid \D_g$; hence any repeated roots of $T(x)$
modulo $p$ can arise only from repeated factors of $g(x)^{km-1}-a$
modulo $p$.
Set $h(x)=g(x)^{s}-a$.
Raising both sides to the power $p^{j}$ and applying the Binomial
theorem, we obtain
$g(x)^{km-1}=h(x)^{p^{j}}+a^{p^{j}}+p\,h(x)h_{1}(x)$
for some $h_{1}(x)\in\mathbb{Z}[x]$. Arguing as in Theorem~\ref{monogeneity of f(x)} \textup{(ii)}, we conclude that
$p\nmid[\mathbb{Z}_{K}:\mathbb{Z}[\theta]]$ if and only if either
$p\mid c_{2}$ and $p\nmid a_{1}$, or
$p\nmid c_{2}\bigl[a^{k}a_{1}^{\,km-1}-(-ma c_{2})^{\,km-1}\bigr]$.
This completes the proof in the present case.\\[2mm]
\textbf{Case (iii).} Suppose that $p\mid c$ and $p\nmid a$.
In this case,
$T(x)\equiv g(x)^{n}\bigl(g(x)^{km-n}-a\bigr)\pmod{p}$.
Since $n\ge2$, every root of $g(x)$ is a repeated root of $T(x)$ modulo $p$.
Let $h_{1}(x)$ be a root of $g(x)$ modulo $p$.
Arguing as in Case~(i), we obtain $f(x)\notin\langle p,h_{1}(x)\rangle^{2}
 \text{ if and only if }  p^{2} \nmid c^{m}.$
As $p\mid c$, this can occur only when $m=1$ and $p^{2}\nmid c$.\\
Hence, throughout this case we may assume that $m=1$. Then $T(x)=g(x)^{k}-ag(x)^{n}+c$ and modulo $p$ we have
$T(x)\equiv g(x)^{n}\bigl(g(x)^{k-n}-a\bigr)$.
Let $\ell\ge0$ be the largest integer such that $p^{\ell}\mid(k-n)$.\\[1mm]
First assume that $\ell=0$, i.e.\ $p\nmid(k-n)$. In this case, we have $p \in \scrE_f$. The assumption on $g$ will imply that $g(x)^{k-n}-a$ will not have any repeated root modulo $p$.
Thus, the reduction $\overline{T}(x)=g(x)^{n}\bigl(g(x)^{k-n}-\bar a\bigr)$ has repeated roots only coming from $g(x)$. Let $h_{1}(x)$ be a root of $g(x)$ modulo $p$. Then it is easy to check that $T(x)\in\langle p,h_{1}(x)\rangle^{2}
\text{ if and only if } p^{2}\mid c.$  Thus, when $\ell=0$, the desired conclusion follows immediately.\\[1mm]
%Check from the previous paper how we handled this type of case to avoid any repeated factor
Now assume that $\ell\ge1$ and write $k-n=p^{\ell}s$ with $p\nmid s$.
Set $h(x)=g(x)^{s}-a$.
Arguing as in Theorem~\ref{monogeneity of f(x)} {(iii)}, we conclude that
$p\nmid[\mathbb{Z}_{K}:\mathbb{Z}[\theta]]$ if and only if either
$p\mid a_{1}$ and $p\nmid c_{2}$, or $p\nmid a_{1}c_{2}^{\,n-1}
\bigl[a^{n_{1}}a_{1}^{\,k_{1}-n_{1}}-(-c_{2})^{\,k_{1}-n_{1}}\bigr],$ where $c_{2}=c/p$ and $a_{1}=(a^{p^{\ell}}-a)/p$. This completes the proof in the present case.
\\[2mm]
\textbf{Case (iv).} Suppose that $p\nmid c$ and $p\mid a$.
Then $T(x)\equiv (g(x)^{k}+c)^{m}\pmod{p}.$ There are two possibilities to consider, namely $m\ge2$ and $m=1$. First consider that $m\geq 2$. Let $h(x)$ be an irreducible factor of $g(x)^{k}+c$ over
$\mathbb{Z}/p\mathbb{Z}$.
Then $T(x)\in\langle p,h(x)\rangle^{2}
\textit{ if and only if }
p^{2}\mid a.$
Hence, by Lemma~\ref{dedekind}, we obtain $p\nmid[\mathbb{Z}_{K}:\mathbb{Z}[\theta]]
\textit{ if and only if }
p^{2}\nmid a.$\\[1mm]
Now assume that $m=1$. In this case, $T(x)=g(x)^{k}-ag(x)^{n}+c,$ and since $p\mid a$, we have
$T(x)\equiv g(x)^{k}+c \pmod{p}.$ Moreover, as $p\mid \D_f$, it follows from \eqref{disc of f(x)} that
$p\mid k$.
Write $k=p^{\ell}s \quad \text{with} \quad p\nmid s.$ By the Binomial theorem, we then obtain $T(x)\equiv (g(x)^{s}+c)^{\,p^{\ell}} \pmod{p}.$
Arguing as in Theorem~\ref{monogeneity of f(x)} \textup{(iv)}, we conclude that
$p\nmid[\mathbb{Z}_{K}:\mathbb{Z}[\theta]]$ if and only if either $p\mid a_2$ and $p\nmid c_1$ or $p\nmid a_2\bigl[a_{2}^{k_{1}}(-c)^{n_{1}}-(-c_{1})^{k_{1}}\bigr]$.
\\[2mm]

\textbf{Case (v).} Suppose that $p\nmid cak$ and $p\mid m$.
By Theorem~\ref{disc of f(x)}, we have $p\mid n$.
Write $n=p^{\ell}s \quad \text{and} \quad m=p^{\ell}s',$ where $\ell\ge1$ and $p\nmid \gcd(s,s')$.
Define $h(x)=(g(x)^{k}+c)^{s'}-a g(x)^{s}.$ Arguing as in Theorem~\ref{monogeneity of f(x)} \textup{(vi)}, we conclude that
$p\nmid[\mathbb{Z}_{K}:\mathbb{Z}[\theta]]$ if and only if
$p^{2}\nmid (a^{p}-a)$.
This completes the proof in the present case.
\\[2mm]
\textbf{Case (vi).} Now consider the last  case when $p \nmid ackm $. Furthermore, as $p\mid \D_f$ and $p\nmid km$ and $k_1m-n_1 \neq 0$, it follows that $p \nmid n(km-n)$. We claim that there exists an integer $\alpha$ such that the polynomial $g(x)^t - \bar{\alpha}$ is the product of all the distinct monic repeated irreducible factors of $\overline{T}(x)$ over $\Z/p\Z$. 
Let $\alpha_1$ and $\alpha_2$ be as defined in Proposition \ref{2.06} and $\alpha = \alpha_1^{u_1} \alpha_2^{u_2} \pmod{p^2}$. Suppose $\beta$ is any repeated root of $\bar T (x)= (g(x)^k+\bar c)^m -\bar a g(x)^n$ in algebraic closure of $\Z/p\Z.$ In this case, $p \in \scrE_f$. Then using the assumption on $g(x)$ modulo $p$ and the derivation similar to those for \eqref{0.009}, we have $g(\beta)^k \equiv \alpha_1 \pmod{p}$. Clearly, $g(\beta)^k \in \Z/p\Z$ and $f(\beta)\equiv 0 \pmod{p}$ implies that
	\begin{align*}
		g(\beta)^n \equiv a^{-1} (g(\beta)^k +c)^m \equiv \alpha_{2} \in {\Z/p\Z}.
	\end{align*}
As ${\gcd(n,k)}=t$ so there exist $u_1,u_2\in\Z$ such that $ku_1+nu_2=t.$ Using this we obtain $g(\beta)^t = (\beta^k)^{u_1}(\beta^n)^{u_2} \in {\Z/p\Z}$, i.e., $g(\beta)^t \equiv \alpha_1^{u_1} \alpha_{2}^{u_2}\pmod{p}$. Thus, we have proved that any repeated root of $\bar T(x)$ is a root of $ g(x)^t-\bar{\alpha}$. Now using arguments similar to those used in the proof of Proposition \ref{2.06}, we obtain that if $\beta_1$ is a root of $g(x)^t-\bar{\alpha}$  with {$g(\beta_1) \equiv \alpha_1^{u_1} \alpha_{2}^{u_2} \pmod{p}$}, then $\beta_1$ is repeated root of $\bar{T}(x)$ if and only if $p\mid (\calC - \calE)$. Hence, we conclude that every root of $g(x)^t-\bar{\alpha}$ is repeated root of $T(x)$ modulo $p$. We have proved that every root of $g(x)^t - \bar{\alpha}$ is a repeated root of $T(x)$ modulo $p$. Since $t=\gcd(n,k)$, it is easy to see that
	\begin{equation}\label{3.20b}
		T(x)= ((g(x)^t)^{k_1}+c)^m -a(g(x)^t)^{n_1} = (g(x)^t-\alpha)q(x) +  (\alpha^{k_1}+c)^m - a \alpha^{n_1},
	\end{equation}
	for some $q(x) \in \Z[x^t]$.  Arguing as in Theorem~\ref{monogeneity of f(x)} \textup{(vii)}, we conclude that
$p\nmid[\mathbb{Z}_{K}:\mathbb{Z}[\theta]]$ if and only if
$p^{2}\nmid (\calC-\calE)$.
This completes the proof in the present case.
\end{proof}
\begin{proof}[Proof of Theorem~\ref{monogeneity of f(g(x))}]
  The proof follows easily from Proposition~\ref{2.5:pro}.
\end{proof}
\section{Analytic Results Related to Monogeneity of \texorpdfstring{$f(x)$}{f(x)}}\label{ana}
This section is devoted to analytic results concerning the monogeneity of the polynomial $f(x)$. Our approach relies crucially on a theorem of Jones and White~\cite[Theorem 3.8]{L3}, which serves as the main tool in establishing an asymptotic formula for the number of monogenic polynomials in the given family.\\
 Consider an integer $\beta$ and positive integers $\rho, \gamma, \alpha, \alpha_0$ and $\beta_0$ that satisfy the following:
    \begin{empheq}[box=\fbox]{equation} \label{6.1:eqn}
        \begin{aligned}
            \text{(i)} ~&\gcd(\alpha_0 \beta_0 \rho, \gamma)=1= \gcd(\alpha, \beta), \\
            \text{(ii)} ~&\text{for each prime } p \mid \beta, \text{ we also have } p^2 \mid \beta, \\
            \text{(iii)} ~&\ao, \bo \text{ be squarefree divisors of } \alpha, \beta \text{ respectively}, \\
            \text{(iv)} ~&\alpha \beta_0 \rho + \beta \not\equiv 0 \pmod{p^2} \text{ for every } p \mid \gamma.
        \end{aligned}
    \end{empheq}
    \noindent Using the above notations, Jones and White \cite[Theorem 3.8]{L3} proved an asymptotic result for the set defined by
    \begin{align*}
        U(X)=U(X; \rho,\gamma, \alpha, \alpha_0, \beta, \beta_0) := |\{  y \leq X: y \equiv \rho~ (\text{mod } \gamma^2),\, \gcd(y,\, \alpha_0 \beta_0) = 1,\, 
         \mu(y) \neq 0,\, \mu(\alpha \beta_0 y + \beta) \neq 0 \}|.
    \end{align*}
	
\begin{theorem} \label{2.3:lemma}
    Given the restrictions on the variables $\rho, \gamma, \alpha, \ao, \beta $ and $\bo$ as in \eqref{6.1:eqn}, we have
    \begin{align*}
        U(X) = X \left( \frac{\phi(\ao \bo)}{\ao \bo \gamma^2 \zeta(2)} \right) \prod_{p \mid \ao \bo \gamma} \left( 1- \frac{1}{p^2} \right)^{-1} \prod_{p \nmid \alpha \beta \gamma} \left( 1- \frac{1}{p^2-1} \right) + O(X^{3/4})
    \end{align*} 
    where the implied constant is dependent on $\gamma, \alpha,\ao,\b$ and $\bo$.
\end{theorem}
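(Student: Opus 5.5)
The statement to prove is Theorem~\ref{2.3:lemma}, the Jones--White asymptotic for $U(X)$. I would prove it by a standard sieve. The plan is to detect the two squarefreeness conditions with the identity $\mu^2(N)=\sum_{d^2\mid N}\mu(d)$, and the coprimality condition $\gcd(y,\ao\bo)=1$ with $\sum_{e\mid \gcd(y,\ao\bo)}\mu(e)$. This gives
\[
U(X)=\sum_{e\mid \ao\bo}\mu(e)\sum_{d_1,d_2}\mu(d_1)\mu(d_2)\,
\#\{y\le X:\ y\equiv\rho \ (\gamma^2),\ e\mid y,\ d_1^2\mid y,\ d_2^2\mid \alpha\bo y+\beta\}.
\]

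\textbf{Counting the inner set.} For fixed $(e,d_1,d_2)$ the conditions are congruences, and I would analyse their local solvability prime by prime.

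At $p\mid\gamma$, hypothesis (i) gives $p\nmid \rho$, so $p\nmid d_1$ is forced. Hypothesis (iv) shows that $p^2\nmid \alpha\bo y+\beta$ automatically when $y\equiv\rho$ modulo $p^2$, so these primes also drop out of $d_2$.

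At $p\mid\beta$, hypothesis (ii) gives $p^2\mid\beta$. Then $d_2$-divisibility at $p$ reduces to a condition on $y$ that is compatible with the others. Hypothesis (iii) and $\gcd(\alpha,\beta)=1$ govern the primes dividing $\alpha$ and $\bo$.

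At the generic primes $p\nmid\alpha\beta\gamma$, the conditions $p^2\mid y$ and $p^2\mid\alpha\bo y+\beta$ are incompatible. So $\gcd(d_1,d_2)=1$ there. Each condition then cuts out exactly one residue class modulo $p^2$.

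By CRT the inner count is $X\,\delta(e,d_1,d_2)/(\gamma^2 e d_1^2 d_2^2)+O(1)$, with $\delta\in\{0,1\}$ recording compatibility.

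\textbf{Truncation.} Next I would split the $d$-sums at a parameter $z$. For $d_1,d_2\le z$, summing the $O(1)$ errors gives $O(z^2)$. The main terms extend to all $d$ with tail error $O(X/z)$.

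For large $d_1$ or $d_2$ the trivial bound is too weak. Here I would use the standard device: bound the count of $y\le X$ with $d^2\mid y$ for some $d>z$ by $O(X/z)$. For the polynomial $\alpha\bo y+\beta$, which is linear, the same bound holds, since $d^2\mid \alpha\bo y+\beta$ with $d>z$ also gives $O(X/z+\sqrt X)$.

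Optimising with $z=X^{1/4}$ balances these errors, plausibly to $O(X^{3/4})$. I expect this bookkeeping of the double sum to be the main technical obstacle: the error must be handled carefully so that the combined range of $(d_1,d_2)$ still yields $X^{3/4}$ rather than something worse.

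\textbf{Evaluating the Euler product.} Finally, the main term is multiplicative, and I would evaluate it one prime at a time.

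For $p\nmid \alpha\beta\gamma$ the local factor is $1-2/p^2$. Writing $1-2/p^2=(1-1/p^2)(1-1/(p^2-1))$ produces $\zeta(2)^{-1}\prod_{p\nmid\alpha\beta\gamma}(1-\tfrac1{p^2-1})$, once the remaining primes are corrected.

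For $p\mid\ao\bo$ the factor $(1-1/p)$ from $e$ combines with the single square condition to give $(1-1/p)$ times the inverse of the removed $(1-1/p^2)$. This yields $\phi(\ao\bo)/(\ao\bo)\cdot\prod(1-1/p^2)^{-1}$.

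The primes $p\mid\gamma$ contribute only the density $1/\gamma^2$ and the compensating factor $(1-1/p^2)^{-1}$.

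I would check each local factor against the stated formula. The delicate one is primes dividing $\alpha\beta$ but not $\ao\bo$, where the factor should be exactly $1-1/p^2$ and be absorbed into $\zeta(2)^{-1}$. Since the statement is quoted from \cite[Theorem 3.8]{L3}, if any of these local computations resisted I would simply cite that source.
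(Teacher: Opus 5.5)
Note first that the paper does not prove this statement. It is quoted verbatim from Jones--White \cite[Theorem~3.8]{L3}, so your fallback of citing that source is exactly what the paper does.

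Judged as a standalone proof, your strategy (M\"obius detection, CRT, truncation, Euler product) is the right one. The final local factors you state multiply out to the claimed constant: $1-2/p^2$ for $p\nmid\alpha\beta\gamma$, $1-1/p^2$ for $p\mid\alpha\beta$ with $p\nmid\ao\bo\gamma$, $1-1/p$ for $p\mid\ao\bo$, and $p^{-2v_p(\gamma)}$ for $p\mid\gamma$. But there is a genuine gap: the error-term step, which you yourself flag as the main obstacle, does not work as described.

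\textbf{The error term.} Counting the $y\le X$ with $d^2\mid y$ or $d^2\mid\alpha\bo y+\beta$ for some $d>z$ only bounds how many $y$ are bad. These are the $y$ where the truncated weight $A_z(y)B_z(y)$ differs from $\mu^2(y)\mu^2(\alpha\bo y+\beta)$; here $A_z(y)=\sum_{d\le z,\,d^2\mid y}\mu(d)$ and $B_z$ is the analogue for $\alpha\bo y+\beta$. At a bad $y$ the discrepancy is not $O(1)$. A truncated M\"obius sum over the squarefree divisors of $\prod_{p^2\mid y}p$ is only bounded by $2^{\omega(y)}$, not by $1$. Your exponent bookkeeping is also off: the errors you list, $z^2$, $X/z$ and $\sqrt X$, balance at $z=X^{1/3}$, not $X^{1/4}$, so they do not explain the exponent $3/4$.

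The missing idea is to split the tail asymmetrically. Write $N(d_1,d_2)$ for the number of admissible $y\le X$ with $d_1^2\mid y$ and $d_2^2\mid\alpha\bo y+\beta$.
\begin{itemize}
\item For $d_1>z$, sum over all $d_2$ first. Then $\sum_{d_2}\mu(d_2)N(d_1,d_2)$ counts admissible $y$ with $d_1^2\mid y$ and $\alpha\bo y+\beta$ squarefree. So this whole region contributes at most $\sum_{d_1>z}X/d_1^2\ll X/z$.
\item For $d_1\le z<d_2$, bound trivially. Use $N(d_1,d_2)\ll_{\alpha,\beta,\bo}X/(d_1^2d_2^2)+1$ together with $d_2\ll\sqrt X$; this gives $O(X/z+z\sqrt X)$.
\end{itemize}
Add $O(z^2)$ from the region $d_1,d_2\le z$ and $O(X/z)$ from completing the main term. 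The choice $z=X^{1/4}$ then gives $O(X^{3/4})$; the term $z\sqrt X$ is what forces this exponent. Alternatively, your device plus $\tau(n)\ll n^{\varepsilon}$ with $z=X^{1/3}$ gives $O(X^{2/3+\varepsilon})$, which also suffices.

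\textbf{The CRT normalization.} Writing the inner count as $X\delta/(\gamma^2ed_1^2d_2^2)$ with $\delta\in\{0,1\}$ is wrong at primes dividing $\ao\bo\beta$. There the conditions overlap rather than sitting on coprime moduli:
\begin{itemize}
\item $p\mid e$ and $p\mid d_1$ together just mean $p^2\mid y$;
\item for $p\mid\bo$, the condition $p^2\mid\alpha\bo y+\beta$ just means $p\mid y$;
\item for $p\mid\beta$ with $p\nmid\bo\gamma$, it is equivalent to $p^2\mid y$.
\end{itemize}
Taken literally, your formula gives local factors $(1-1/p)(1-1/p^2)$ at $p\mid\ao$ and $(1-1/p^2)^2$ at $p\mid\beta$, $p\nmid\bo\gamma$. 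The correct values are $1-1/p$ and $1-1/p^2$. Replace $\delta/(ed_1^2d_2^2)$ by the true multiplicative density $g(e,d_1,d_2)$. Equivalently, impose $\gcd(y,\ao\bo)=1$ directly as $\phi(\ao\bo)$ residue classes and drop $e$. With that change, your Euler product computation goes through.
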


{}
%where $\ell$ is a prime number not dividing $km$
\begin{proof}[Proof of Theorem~\ref{1.1:thm}]
    Further, for each prime $p$ dividing $m$, select $\lambda_p \in (\Z/p\Z)^*$ such that $\lambda_p^p \not\equiv \lambda_p \pmod{p^2}$. Now consider the following system of congruences
    \begin{align} \label{6.3:eqn}
        x \equiv q \Mod{q^2} \quad \text{ and } \quad x \equiv \lambda_p \pmod{p^2},~ \forall\, p \mid m 
    \end{align}
    Then by Chinese remainder theorem, there exists a solution $\lambda$ to \eqref{6.3:eqn} such that $\lambda \equiv \lambda_p \pmod{\gamma^2}$ for all prime $p \mid m$, where $\gamma = q\rad(m)$. As $\lambda_p$ are coprime to $p$ dividing $\gamma$. Note that we need $q \mid a$, let us split $a$ as $a= q a_1$. Let $\rho, \gamma, \a,\b,\ao,\bo$ and $y$ be the integers defined as
    $$ \rho=\lambda, ~ \gamma= q\rad(m),  ~ y=a_1, ~ \a_0=1, ~ \bo=1, ~ \beta=m^m,  ~\a= q\sfrak^{\sfrak} (m-\sfrak)^{m-\sfrak}. $$
    
    Using $\gcd(m,q\sfrak)=1$ and the definition of $\a$, it is easy to observe that $\gcd(\a,\b)=1$. Further, for prime $p$ dividing $\gamma$, we have $p=q$ or $p \mid m$ and $p \nmid \lambda$. If $p=q$, then $p^2 \mid q\rho $ and $p \nmid m^m$. If $p\neq q$, then using $p \mid m$ along with $\gcd(m,\sfrak)=1$, we get $p^2 \mid m^m$ and $p \nmid \rho\sfrak(m-\sfrak)$. Thus we have $\a\bo y + \b \not\equiv 0 \Mod{p^2}$ for all primes $p$ dividing $\gamma$. Therefore, the variables $\rho, \a,\b,\ao,\b$ and $\gamma$ satisfy the conditions of \eqref{6.1:eqn}.
    
    We wish to estimate the following
    $$ U(X) := | \{ a \leq X : a \equiv \rho \Mod{\gamma^2}, ~\mu(a) \neq 0, ~\mu(\a\bo a+\b) \neq 0 \} |. $$
    Applying Theorem \ref{2.3:lemma}, we obtain
    \begin{align*}
        U(X) &=  \frac{X}{\gamma^2 \zeta(2)} \prod_{p \mid qm} \left( 1- \frac{1}{p^2}\right)^{-1} \prod_{p \nmid qm\sfrak (m-\sfrak)} \left( 1-\frac{1}{p^2-1} \right) + O(X^{3/4}),
    \end{align*}
    where the implied constant depends on $q,m$ and $\sfrak$. For the choices of $a$ obtained above, we construct the polynomial $f(x) = (x^{q^j} \pm 1)^m - a x^{n}$, where $j =v_q(n)$. The polynomial $f(x)$ defined above are monogenic by Corollary \ref{6.1lemma}.
\end{proof}

For the proof of Theorem \ref{2.10:thm}, we require a result which ensures that for any given polynomial $F(x)$, there are infinitely many primes $p$ such that $F(p)$ is squarefree. For the statement of this theorem, we will need the definition of local obstruction.

\begin{definition}
    Let $F(x) \in \Z[x]$ and $p$ be a prime. The polynomial $F(x)$ is said to have a local obstruction at $p$ if for all $z \in (\Z/p^2\Z)^*$, $F(z) \equiv 0 \pmod{p^2}$.
\end{definition}

The statement in Theorem \ref{2.5:cor} is a special case of a general result proved in \cite[Theorem 1.1]{Pasten2015}. It is a well known concept among the analytic number theorists. For a thorough explanation and discussion on this theorem, refer \cite[Remark 2.6]{L3} and the subsequent discussion therein.

\begin{theorem} \label{2.5:cor}
    Let $F(x) \in \Z[x]$, and suppose that $F(x)$ factors into product of distinct irreducibles, where the largest degree of any irreducible factor of $F(x)$ is $d$. Suppose further that, for each prime $q$, $F(x)$ does not have a local obstruction at $q$. If $d \leq 3$, or if $d \geq 4$ and assuming the $abc$-conjecture for number fields for $F(x)$, there exist infinitely many primes $p$ such that $F(p)$ is squarefree.
\end{theorem}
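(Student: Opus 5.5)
The statement is essentially \cite[Theorem~1.1]{Pasten2015} specialised to prime arguments, so the plan is to reproduce the standard squarefree-sieve argument for $F(p)$ along primes. We may assume $F$ is not identically a unit multiple with content issues: since $F$ is a product of distinct irreducibles, it is squarefree in $\Q[x]$, so $D=\operatorname{disc}(F)\neq 0$. Let $N(X)$ be the number of primes $p\le X$ with $F(p)$ squarefree. Write $\pi(X)$ for the prime-counting function. We aim to show $N(X)\ge c_F\,\pi(X)(1+o(1))$ for a constant $c_F>0$. Every such $p$ has $\ell^2\mid F(p)$ for some prime $\ell$, and we split the $\ell$ into three ranges: $\ell\le \xi$, $\xi<\ell\le X\log X$, and $\ell> X\log X$, with $\xi=\xi(X)$ tending slowly to infinity.

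\emph{Small primes.} For $\ell\le\xi$ we count primes $p$ avoiding the residues $z\in(\Z/\ell^2\Z)^*$ with $F(z)\equiv 0\pmod{\ell^2}$. By the Chinese remainder theorem and Siegel--Walfisz (or simply Dirichlet's theorem in progressions with the modulus $\prod_{\ell\le\xi}\ell^2$ fixed for each fixed $\xi$), this count is
\[
\pi(X)\prod_{\ell\le\xi}\Bigl(1-\frac{\rho^*(\ell^2)}{\phi(\ell^2)}\Bigr)+o(\pi(X)),
\]
where $\rho^*(\ell^2)$ is the number of such units $z$. The hypothesis of no local obstruction gives $\rho^*(\ell^2)<\phi(\ell^2)$, so each factor is positive. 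For $\ell\nmid D$, Hensel's lemma gives $\rho^*(\ell^2)\le \deg F$, so the Euler product converges to a positive constant $c_F$.

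\emph{Medium primes.} For $\xi<\ell\le X\log X$ we bound the number of $p\le X$ with $\ell^2\mid F(p)$ by at most $\deg F\,(X/\ell^2+1)$, counting integers rather than primes. Summing over $\ell$ gives $O(X/\xi)$, but the $+1$ terms give $O(X/\log X)$, which is of the same order as $\pi(X)$, so this is too crude. We therefore refine the count:
\begin{itemize}
\item For $\ell\le X^{1-\epsilon}$, Brun--Titchmarsh gives $\ll X/(\phi(\ell^2)\log(X/\ell^2))$ primes per residue class, which sums to $o(\pi(X))$ for $\ell\le\sqrt{X}$.
\item For $\sqrt X<\ell\le X\log X$, this is the classical hard range. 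For $d\le 2$ it is handled by the standard elementary argument of Estermann/Hooley type, via a factorisation trick for the quadratic. For $d=3$ we invoke Helfgott's square-sieve results, as done in Pasten's paper.
\end{itemize}

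\emph{Large primes and the abc input.} For $\ell> X\log X$ and $d\ge 4$, we follow Granville's method. The abc-conjecture for number fields, applied through a Belyi map attached to $F$, gives $\rad(F(p))\gg_\epsilon p^{\deg F-1-\epsilon}$. Hence if $F(p)=\ell^2 u$ with $\ell$ large, then $|u|\ll p^{1+\epsilon}$, which forces the number of such $p\le X$ to be $o(\pi(X))$. Pasten's refinement handles restricting the argument to primes; his contribution is exactly this uniform abc bound combined with a count over primes. This large-prime range is the main obstacle, and for it we would simply quote \cite[Theorem~1.1]{Pasten2015} and \cite[Remark~2.6]{L3} rather than redo the Belyi-map estimates.

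Combining the three ranges yields $N(X)\ge (c_F-o(1))\pi(X)$, which tends to infinity. This gives infinitely many primes $p$ with $F(p)$ squarefree.
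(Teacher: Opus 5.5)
Your proposal rests on the same thing the paper uses. The paper gives no argument for this statement beyond citing \cite[Theorem~1.1]{Pasten2015} together with the discussion in \cite[Remark~2.6]{L3}. Once you quote that result, with Helfgott's work on cubic $f(p)$ behind the unconditional case $d\le 3$, the statement follows. As a citation this is fine, and your small-prime analysis is also correct: CRT plus Dirichlet/Siegel--Walfisz, with the no-local-obstruction hypothesis making each Euler factor positive.

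The sieve sketch you build around the citation would not stand on its own, so the conclusion rests on the citation only.

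\textbf{Medium range.} Your diagnosis of the crude count is off on both terms.
\begin{itemize}
\item The $X/\ell^2$ terms contribute about $X/(\xi\log\xi)$. This is not $o(\pi(X))$ for slowly growing $\xi$, which is the actual reason Brun--Titchmarsh is needed (and it only applies while $\ell^2<X$).
\item The $+1$ terms contribute $\deg F\cdot\pi(X\log X)\asymp X$, not $O(X/\log X)$.
\item The trivial bound does dispose of $X^{1/2-\delta}<\ell\le X/\log X$. So for prime arguments the hard range begins near $X/\log X$, not at $\sqrt X$.
\end{itemize}

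\textbf{Large range.} The abc step is inverted. From $|F(p)|=\ell^2|u|$, $\rad(F(p))\le \ell|u|$ and $\rad(F(p))\gg_\epsilon p^{\deg F-1-\epsilon}$, one gets $\ell\ll_\epsilon p^{1+\epsilon}$. Equivalently, $|u|\gg p^{\deg F-2-2\epsilon}$ is a lower bound; it is not $|u|\ll p^{1+\epsilon}$. This pointwise bound only rules out $\ell>X^{1+\epsilon}$ and says nothing about $X\log X<\ell\le X^{1+\epsilon}$.

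\textbf{Missing case.} For $d\ge 4$ your sketch assigns no method to $X/\log X<\ell\le X\log X$. Estermann-type arguments cover $d\le 2$ and Helfgott's covers $d=3$.

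Controlling these ranges along primes is the real content of Granville's and Pasten's work. Either present the result purely as a citation, as the paper does, or correct these steps.
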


\begin{proof}[Proof of Theorem \ref{2.10:thm}]
    Consider the polynomial, 
    $$ F(x) = n^n (km-n)^{km-n} x^k - (km)^{km} \in \Z[x]. $$
    
    We wish to apply Theorem \ref{2.5:cor} to the polynomial $F(x)$. For this we first have to ensure that $F(x)$ has no repeated roots. Further, $F(x)$ should not have any local obstructions. That is, for each prime $\ell$, we must show that there exists some $z \in (\Z/\ell^2\Z)^*$ such that $F(z) \not\equiv 0 \Mod{\ell^2}$.
		
    One can easily verify that the only possible repeated root of $F(x)$ is $0$ which is clearly not a root of $F(x)$. Hence, $F(x)$ has no repeated factor. Now we proceed to prove that $F(x)$ has no local obstruction. Let $\ell$ be a prime and suppose that $ n(km-n) \equiv 0$ (mod $\ell$). This implies either $\ell \mid n$ or $\ell \mid (km-n)$. If $\ell \mid n$ (or $\ell \mid (km-n)$), then using $\gcd(km, n)=1$, we have $F(1)\not\equiv 0 \Mod{\ell^2}$. Further, if $\ell\mid km$, then using $\ell\nmid n(km-n)$, we obtain $F(1)\not\equiv 0 \Mod{\ell^2}$. Hence $F(x)$ does not have a local obstruction at primes $\ell$ dividing $kmn(km-n)$.
		
    Now assume that $\ell \nmid n(km-n)$. If $\ell$ is a prime dividing $km$, then we have $\ell^2$ divides $(km)^{km}$. Using this, we obtain $F(1) \equiv n^n (km-n)^{km-n}\Mod{\ell^2}$. Keeping in mind that $\gcd(n,km)=1$ and using $\ell \mid km$, we have $\ell$ does not divides $F(1)$. Therefore, we have $F(1) \not \equiv 0 \Mod{\ell^2}$, i.e., $F(x)$ does not have a local obstruction at primes $\ell$ dividing $km$ as well. %divide the right side of this congruence, hence $F(1) \not\equiv 0$ (mod $q^2$).
    
    Now assume that $\ell$ is a prime such that $\ell \nmid nkm(km-n)$. Consider,
    \begin{align*}
        F(1+\ell) - F(1) &= n^n (km-n)^{km-n} [ (1+\ell)^k - 1^t] \\
	&\equiv n^n (km-n)^{km-n} (\ell k)~ \Mod{\ell^2}.
    \end{align*}
    We use $\ell \nmid nk(km-n)$, to deduce that the right side of the above congruence is not divisible by $\ell^2$ in this case. This implies $F(1+\ell) \not\equiv F(1)$ (mod $\ell^2$). Hence, we obtain that either $F(1+\ell)$ or $F(1)$ is not congruent to $0$ modulo $\ell^2$. This concludes the proof that $F(x)$ has no local obstruction at any prime $\ell$.
		
    Thus, by Theorem \ref{2.5:cor} there exists infinitely many primes $p$ such that $F(p)$ is squarefree, where the existence of primes $p$ is unconditional when $k=2,3$ and conditional on the $abc$-conjecture for number fields when $k \geq 4$. For any of these primes $p$, we have
    \begin{align*}
        -(\calC-\calE) &= n^n (km-n)^{km-n} p^k - (km)^{km} = F(p),
    \end{align*}
    is squarefree. Hence, by Corollary \ref{6.1lemma}, $f(x)= (x^k \pm 1)^m - px^n$ is monogenic. This completes the proof of Theorem \ref{2.10:thm}.
\end{proof}

\section{Non Monogeneity of \texorpdfstring{$f(x)$}{f(x)}}
We Start this section introduction to the notation and reviews background material on Newton polygons. Let $p$ be a fixed rational prime. We write $\F_p$ for the finite field with $p$ elements,
$v_p$ for the usual $p$-adic valuation on $\Q$, $\Q_p$ for its $p$-adic completion, and $\Z_p$
for the ring of $p$-adic integers.

To analyze polynomials over $\Q_p$, we recall the notion of the Gauss valuation on the field
of rational functions $\Q_p(x)$, which extends the $p$-adic valuation $v_p$ on $\Q_p$.

\begin{definition}\label{def:Gauss}
The \emph{Gauss valuation} on $\Q_p[x]$ is defined by
\[
v_{p,x}\!\left(a_0+a_1x+a_2x^2+\cdots+a_nx^n\right)
= \min\{\,v_p(a_i)\mid 0\le i\le n\,\},
\]
where $a_i\in\Q_p$.
It is extended to rational functions in $\Q_p(x)$ by
\[
v_p\!\left(\frac{P}{Q}\right)=v_p(P)-v_p(Q),
\qquad 0\ne P,Q\in\Q_p[x].
\]
\end{definition}

Let $\varphi\in\Z_p[x]$ be a monic polynomial whose reduction modulo $p$, denoted by $\bar\varphi(x)$,
is irreducible in $\F_p[x]$. Furthermore, assume that $\varphi$ lifts to a monic irreducible factor
of $g(x)$ modulo $p$. Let $\F_\varphi$ be the residue field
\[
\F_\varphi=\F_p[x]/(\bar\varphi(x))\simeq\Z_p[x]/(p,\varphi(x)),
\]
(note that if $\deg(\varphi)=1$, then $\F_\varphi\simeq\F_p$).
For $g(x)\in\Z_p[x]$, there is a unique $\varphi$-expansion
\[
g(x)=\sum_{i=0}^t a_i(x)\varphi(x)^i,
\qquad \deg a_i(x)<\deg\varphi.
\]
This expansion is obtained by successive divisions of $g(x)$ by powers of $\varphi(x)$. The \emph{$\varphi$-Newton polygon} $N_\varphi(g)$ of a polynomial $g(x)$ for $p$ is defined as
the lower convex hull of the set of points $\{\, (i, v_p(a_i(x))) \mid a_i(x)\ne 0 \,\}$ in the Euclidean plane, where the $a_i(x)$ are the coefficients in the $\varphi$-expansion of $g(x)$. Geometrically, $N_\varphi(g)$ consists of a finite sequence of line segments $S_1,\dots,S_r$, called \emph{sides}, ordered by increasing slope. The polygon determined by the sides of negative slopes of $N_\varphi(g)$ is called the \emph{principal $\varphi$-Newton polygon} of $g(x)$ and is denoted by $N_\varphi^+(g)$. The length of $N_\varphi^+(g)$ is $\ell\bigl(N_\varphi^+(g)\bigr)=v_\varphi(\bar g(x)),$ the highest power of $\varphi$ dividing $g(x)$ modulo $p$. Let $S$ be a side of $N_\varphi^+(g)$ with initial point $(s,v_p(a_s(x)))$ and slope $\lambda=-h/e$, where $h$ and $e$ are positive coprime integers. The length $\ell$ of $S$ is the length of its projection onto the horizontal axis, and its degree, defined as $d=\ell/e$, is the number of segments into which the integral lattice divides $S$. We define the \emph{residual polynomial} attached to the side $S$ as $R_\lambda(g)(y)=c_0+c_ey+\cdots+c_{(d-1)e}y^{d-1}+c_{de}y^d\in\F_\varphi[y],$ where the residue coefficients $c_i\in\F_\varphi$ are given by
\[
c_i=
\begin{cases}
0, & \text{if } (s+i,\,v_p(a_{s+i}(x))) \text{ lies strictly above } S,\\[0.4em]
\dfrac{a_{s+i}(x)}{p^{v_p(a_{s+i}(x))}} \bmod (p,\,\varphi(x)),
& \text{if } (s+i,\,v_p(a_{s+i}(x))) \text{ lies on } S.
\end{cases}
\]

Note that for integers $i$ not divisible by $e$, the points $(s+i,\, v_p(a_{s+i}(x)))$ do not lie on $S$,
and hence the corresponding residue coefficients $c_i$ are zero.
The \emph{$\varphi$-index} $\it{ind}_\varphi(g)$ of $g(x)$ is $\deg(\varphi)$ multiplied by the number
of lattice points with positive integer coordinates that lie on or strictly below the principal
$\varphi$-Newton polygon $N_\varphi^+(g)$ (see \cite[Definition~1.3]{EFMN12}).

We say that a polynomial $g(x)\in\Z_p[x]$ is \emph{$\varphi$-regular} if all residual polynomials
$R_\lambda(g)(y)$ associated with the sides $S\subset N_\varphi^+(g)$ are square-free in
$\F_\varphi[y]$. If this condition holds for every $\varphi$ appearing in the factorization
of $g(x)\pmod p$, then $g$ is said to be \emph{$p$-regular}.
Let $\bar g(x)=\prod_{i=1}^r \bar\varphi_i^{\,\ell_i}$
be the factorization of $g(x)\pmod p$ into powers of monic irreducible coprime polynomials
over $\F_p$. Then $g(x)$ is called $p$-regular if it is $\varphi_i$-regular with respect to $p$
for every $i=1,\dots,r$.
For each $i$, let $N_{\varphi_i}^+(g)$ denote the principal Newton polygon of $g(x)$ with respect
to $\varphi_i$, having sides $S_{i,1},\dots,S_{i,r_i}$.
Let the residual polynomial corresponding to the side $S_{i,j}$ be denoted by
$R_{\lambda_{i,j}}(g)(y)$, and suppose it factors in $\F_{\varphi_i}[y]$ as
\[
R_{\lambda_{i,j}}(g)(y)=\prod_{s=1}^{s_{i,j}} \psi_{i,j,s}(y)^{a_{i,j,s}}.
\]

With this setup, we state the index theorem of Ore.

\begin{theorem}[Ore]\label{thm:Ore}
{\cite[Theorems~1.7 and~1.9]{EFMN12}}
Under the above hypotheses, the following hold:
\begin{enumerate}
\item
\[
v_p\bigl([\Z_K:\Z[\alpha]]\bigr)\ge
\sum_{i=1}^r \it{ind}_{\varphi_i}(g).
\]
The equality holds if $g(x)$ is $p$-regular.

\item
If $g(x)$ is $p$-regular, then
\[
p\Z_K=\prod_{i=1}^r \prod_{j=1}^{r_i} \prod_{s=1}^{s_{i,j}} \mathfrak{p}_{i,j,s}^{\,e_{i,j}}
\]
is the factorization of $p\Z_K$ into powers of prime ideals of $\Z_K$ lying above $p$, where
$e_{i,j}=\ell_{i,j}/d_{i,j}$ is the ramification index, $\ell_{i,j}$ is the length of $S_{i,j}$,
$d_{i,j}$ is the degree of $S_{i,j}$, and
\[
f_{i,j,s}=\deg(\varphi_i)\deg(\psi_{i,j,s})
\]
is the residue degree of the prime ideal $\mathfrak{p}_{i,j,s}$ over $p$.
\end{enumerate}
\end{theorem}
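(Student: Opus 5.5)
Since this is a classical result, the plan is to follow the standard proof of Ore's theorem as reorganized by Montes and collaborators, which \cite{EFMN12} presents, rather than to derive anything from the earlier sections. We work $p$-adically throughout. The index $v_p([\Z_K:\Z[\alpha]])$ depends only on the completion, since $\Z_K\otimes\Z_p=\prod_{\mathfrak p\mid p}\mathcal O_{K_{\mathfrak p}}$. So it equals the $p$-adic index of $\Z_p[x]/(g)$ inside its normalization.

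\emph{Step 1: reduce to a single $\varphi$.} By Hensel's lemma, the coprime factorization $\bar g=\prod\bar\varphi_i^{\ell_i}$ lifts to $g=\prod G_i$ in $\Z_p[x]$, with $\bar G_i=\bar\varphi_i^{\ell_i}$. Since the $\bar G_i$ are pairwise coprime, the resultants $\mathrm{Res}(G_i,G_j)$ are $p$-adic units. The formula $\D_g=\prod_i \D_{G_i}\prod_{i\ne j}\mathrm{Res}(G_i,G_j)$, together with \eqref{eq:disc-relation} applied factor by factor, then gives additivity of the index over $i$. It therefore suffices to treat $g=G$ with $\bar g=\bar\varphi^{\ell}$. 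The same reduction shows that the prime ideal decomposition splits according to the $\varphi_i$.

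\emph{Step 2: the theorem of the polygon and the theorem of the residual polynomial.} Each root $\beta$ of $G$ in $\overline{\Q}_p$ satisfies $v_p(\varphi(\beta))=-\lambda>0$ for exactly one slope $\lambda$ of $N_\varphi^+(G)$. The number of such roots, counted with multiplicity, is $\deg\varphi$ times the length of the corresponding side. This comes from comparing $v(a_i(\beta)\varphi(\beta)^i)$ in the $\varphi$-expansion; the coefficients $a_i(\beta)$ have valuation $v_p(a_i)$ because $\deg a_i<\deg\varphi$ and $\bar\varphi$ is irreducible. Hence $G=\prod_S G_S$ over $\Z_p$. For each side $S$ of slope $-h/e$, consider the residue of $\varphi(\beta)^e/p^h$. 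It is a root of the residual polynomial $R_\lambda(G)$ in $\F_\varphi$. A Hensel-type argument then lifts the coprime factorization of $R_\lambda(G)$ to a factorization of $G_S$. When a factor $\psi$ appears to the first power, the corresponding factor is irreducible. Its field has ramification index $e$ and residue degree $\deg\varphi\cdot\deg\psi$: the residue field is generated over $\F_\varphi$ by the root of $\psi$, and the value group contains $\tfrac1e\Z$. This proves part (2) under $p$-regularity, since the degrees $\sum e\,f$ then add up to $\deg G$.

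\emph{Step 3: the index count, which I expect to be the main obstacle.} For the inequality in (1), we exhibit explicit integral elements
\[
\frac{q_i(x)\,\varphi(x)^j}{p^{\,y}}\in\Z_K,
\]
where the $q_i$ are the partial quotients in the $\varphi$-adic division of $G$. The exponent $y$ is at most the ordinate of $N_\varphi^+(G)$ at the relevant abscissa, and integrality is checked via the valuations from Step 2. Counting these elements modulo $\Z[\alpha]$, they contribute $\deg\varphi$ times the number of lattice points under the polygon, which is the lower bound $\mathrm{ind}_\varphi(G)$. For equality under regularity, compute $v_p(\D_G)$ from the roots as a sum of $v_p(\beta-\beta')$ over pairs of roots. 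Regularity makes each $G_{S,\psi}$ behave like a "generic" polynomial of the given slope, so $v_p(d_{K_{\mathfrak p}})$ is determined by the data $e,f$. Then compare $v_p(\D_G)-v_p(d_K)$ with $2\,\mathrm{ind}_\varphi(G)$ via \eqref{eq:disc-relation}. Verifying that this discriminant bookkeeping matches exactly the lattice-point count is the delicate combinatorial step. I would first do it when $N^+_\varphi$ has one side and $\deg\varphi=1$, where it reduces to Ore's classical count, and then add sides using resultant additivity as in Step 1.
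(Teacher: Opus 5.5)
The paper does not prove this statement; it quotes it from \cite{EFMN12}. So the relevant comparison is with the standard Ore--Montes--Nart argument you are reconstructing. Steps 1 and 2 have the right architecture but one justification is missing, and Step 3 has a genuine gap.

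\emph{Step 2.} Comparing the valuations of $a_i(\beta)\varphi(\beta)^i$ at a single root only shows that $-v_p(\varphi(\beta))$ is \emph{some} slope of $N_\varphi(G)$. It says nothing about \emph{how many} roots have a given slope or a given residual root, and that count is the content of the theorems of the polygon and of the residual polynomial. You can supply it in either of two ways.
\begin{itemize}
\item Use the multiplicativity of the valuation $\sum a_i\varphi^i\mapsto\min_i\{v_p(a_i)-i\lambda\}$; this is where the fact that $\F_\varphi$ is a field enters.
\item Base change to the unramified extension $L$ of degree $\deg\varphi$, where $\varphi=\prod_j(x-\gamma_j)$. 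Since $v_p(a_i(\gamma_1))=v_p(a_i)$ and $\varphi(x+\gamma_1)=x\,u(x)$ with $u(0)$ a unit, the negative-slope part of the ordinary Newton polygon of $G(x+\gamma_1)$ is exactly $N_\varphi^+(G)$. Its residual polynomials are those of $G$ specialized at $\bar\gamma_1$, up to rescaling. Classical Newton polygon theory plus Frobenius conjugacy of the $\gamma_j$ then gives the counts, including the factor $\deg\varphi$.
\end{itemize}
The same remark yields $N_\varphi^+(g)=N_\varphi^+(G)$, which your Step 1 uses tacitly.

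\emph{Step 3, the gap.} Your equality argument assumes that under regularity $v_p(d_{K_{\mathfrak p}})$ is determined by $e,f$, and that $v_p(\D_G)$ can be read off from first-order root data. Both fail when $p\mid e$. Take $p=2$, $\varphi=x$, and the polynomials $x^2-2$ and $x^2+2x-2$.
\begin{itemize}
\item Both have the same $N_x^+$ (the segment from $(0,1)$ to $(2,0)$) and the same residual polynomial $1+y$.
\item So both are regular with $e=2$, $f=1$ and $\mathrm{ind}_x=0$.
\item Yet $v_2(\D)=v_2(d_K)$ is $3$ for the first and $2$ for the second; the roots differ by $2\sqrt2$, respectively $2\sqrt3$.
\end{itemize}
Inside one irreducible factor, the polygon data determine only the difference $v_p(\D_F)-v_p(d_F)$, never either term. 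So the bookkeeping you propose cannot be carried out, and appealing to ``Ore's classical count'' in the base case is circular.

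What works is to compute the index of each irreducible regular factor $F$ directly. Suppose $F$ has one side of slope $-h/e$ and residual factor $\psi$, and put $y=\varphi(\theta)^e/p^h$. Consider the elements $\theta^i\varphi(\theta)^j/p^{\lfloor jh/e\rfloor}$ with $0\le i<\deg\varphi$ and $0\le j<e\deg\psi$.
\begin{itemize}
\item Their valuations are the fractional parts of $jh/e$, which run through $\frac1e\Z/\Z$.
\item For fixed $j_0<e$, the elements with $j=j_0+ek$, divided by the one with $i=0$, $j=j_0$, have residues $\bar\theta^{\,i}\bar y^{\,k}$. These form an $\F_p$-basis of the residue field.
\item Hence these elements form an integral basis.
\item The $\theta^i\varphi(\theta)^j$ form a basis of $\Z_p[\theta]$, so $\mathrm{ind}(F)=\deg\varphi\sum_j\lfloor jh/e\rfloor=\mathrm{ind}_\varphi(F)$.
\end{itemize}
Your resultant idea then handles the rest correctly. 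Roots of distinct regular factors are separated at first order: either the slopes differ, or the slopes agree and the residual roots differ. Hence $v_p(\Res(F,F'))=\deg\varphi\,LL'\min(\nu,\nu')$ for lengths $L,L'$ and slopes $-\nu,-\nu'$. Combined with $\mathrm{ind}(G)=\sum_F\mathrm{ind}(F)+\sum_{\{F,F'\}}v_p(\Res(F,F'))$, summed over unordered pairs, this is exactly the lattice-point count of the concatenated polygon.
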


The following lemma (cf.~\cite{Hen94}), which characterizes the prime divisors of $i(K)$,
is crucial for the proof of the main theorems.

\begin{lemma}\label{lem:index}
Let $K$ be an algebraic number field and $p$ a rational prime.
Then $p$ divides $i(K)$ if and only if there exists a positive integer $f$ such that the number
of distinct prime ideals $\mathfrak{P}$ of $\Z_K$ lying above $p$ with residue degree $f$
is greater than the number of monic irreducible polynomials $N_f$ of degree $f$ over $\F_p[x]$.
\end{lemma}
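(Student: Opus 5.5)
The statement is Hensel's classical characterization of the common index divisors. The plan is to prove it through the local criterion of Lemma~\ref{dedekind}, in the following reformulation: for $\alpha\in\Z_K$ with $K=\Q(\alpha)$, $p\nmid[\Z_K:\Z[\alpha]]$ if and only if $\Z_p[\alpha]=\Z_K\otimes\Z_p=\prod_{\mathfrak P\mid p}\mathcal O_{\mathfrak P}$. Since $p\mid i(K)$ means exactly that $p$ divides $[\Z_K:\Z[\alpha]]$ for every primitive $\alpha\in\Z_K$, it suffices to show two things. First, if some primitive $\alpha$ has $p\nmid[\Z_K:\Z[\alpha]]$, then for every $f$ the number of primes above $p$ of residue degree $f$ is at most $N_f$. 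Second, if these inequalities all hold, then such an $\alpha$ exists.

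\emph{Necessity.} Suppose $p\nmid[\Z_K:\Z[\alpha]]$, and let $\bar g_1^{e_1}\cdots\bar g_r^{e_r}$ be the factorization of the minimal polynomial of $\alpha$ modulo $p$. By the Dedekind--Kummer theorem (the factorization half of Dedekind's criterion quoted in Lemma~\ref{dedekind}), $p\Z_K=\prod_i\mathfrak P_i^{e_i}$ with $\mathfrak P_i=(p,g_i(\alpha))$ and residue degree $f(\mathfrak P_i)=\deg\bar g_i$. The $\bar g_i$ are pairwise distinct monic irreducibles over $\F_p$. Hence the primes of residue degree $f$ inject into the set of monic irreducible polynomials of degree $f$, so their number is at most $N_f$. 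Taking the contrapositive gives one direction of the lemma.

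\emph{Sufficiency.} Now assume that for every $f$ at most $N_f$ primes above $p$ have residue degree $f$. Then we can assign to each $\mathfrak P\mid p$ a monic irreducible $\bar\varphi_{\mathfrak P}\in\F_p[x]$ of degree $f(\mathfrak P)$, with different primes receiving different polynomials. Fix $\mathfrak P$ and let $\mathcal O_{\mathfrak P}$ be the completion, with uniformizer $\pi$ and residue field $\F_{p^{f}}$. Choose $\bar\beta\in\F_{p^f}$ with minimal polynomial $\bar\varphi_{\mathfrak P}$, and lift it to $\beta\in\mathcal O_{\mathfrak P}$. Adjusting $\beta$ by a multiple of $\pi$ if necessary, we arrange $v_{\mathfrak P}(\varphi_{\mathfrak P}(\beta))=1$ whenever $e_{\mathfrak P}>1$; this is the standard adjustment, since $\varphi_{\mathfrak P}'(\beta)$ is a unit. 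With this choice $\mathcal O_{\mathfrak P}=\Z_p[\beta]$. The characteristic polynomial of $\beta$ over $\Q_p$ is congruent to $\varphi_{\mathfrak P}^{e_{\mathfrak P}}$ modulo $p$, and these residual polynomials are pairwise coprime across the different $\mathfrak P$. Therefore the tuple $\beta=(\beta_{\mathfrak P})\in\prod\mathcal O_{\mathfrak P}$ generates the whole product over $\Z_p$: by Hensel's lemma / CRT, the idempotents lie in $\Z_p[\beta]$. The property $\Z_p[\beta]=\prod\mathcal O_{\mathfrak P}$ is open, since it depends only on $\beta$ modulo a sufficiently high power $p^N$. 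By density of $\Z_K$ in $\Z_K\otimes\Z_p$, any $\alpha\in\Z_K$ with $\alpha\equiv\beta\pmod{p^N}$ then satisfies $p\nmid[\Z_K:\Z[\alpha]]$, provided $\alpha$ is primitive.

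\emph{Main obstacle: primitivity of $\alpha$.} The delicate point is to guarantee that $\alpha$ can be chosen with $K=\Q(\alpha)$. The plan is to note that the admissible $\alpha$ form a full coset $\alpha_0+p^N\Z_K$. The non-primitive elements of $\Z_K$ lie in the union of the finitely many proper subfields of $K$, and each of these is contained in a proper $\Q$-subspace. A full-rank lattice coset cannot be covered by finitely many proper subspaces. For instance, $\alpha_0+p^N\lambda\gamma$ for a fixed primitive $\gamma$ and varying integers $\lambda$ meets each proper subspace at most once. Hence some $\alpha$ in the coset is primitive, and for it $p\nmid[\Z_K:\Z[\alpha]]$, so $p\nmid i(K)$. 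Together with the necessity step, this proves the equivalence.
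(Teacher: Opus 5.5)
The paper gives no proof of this lemma: it quotes Hensel's classical criterion with a citation and uses it only in the proof of Theorem~\ref{nonmonogeneity}. So there is nothing to compare against. Your argument is a correct, self-contained reconstruction of the standard proof.
\begin{itemize}
\item You reduce $p\nmid i(K)$ to the existence of a primitive $\alpha$ with $p\nmid[\Z_K:\Z[\alpha]]$.
\item You get necessity from the Dedekind--Kummer correspondence between primes above $p$ and the distinct irreducible factors of $\bar f$.
\item You get sufficiency by building local generators $\beta_{\mathfrak P}$ whose residual polynomials $\bar\varphi_{\mathfrak P}^{e_{\mathfrak P}}$ are pairwise coprime, gluing them, and approximating by an element of $\Z_K$.
\end{itemize}

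\textbf{Citation.} The factorization statement you use is not contained in Lemma~\ref{dedekind} as the paper states it; that lemma only characterizes when $p\nmid[\Z_K:\Z[\theta]]$. Either cite Dedekind--Kummer separately, or argue directly. If $p\nmid[\Z_K:\Z[\alpha]]$, then $\Z_K/p\Z_K\cong\F_p[x]/(\bar f)$. The maximal ideals of this ring correspond to the distinct $\bar g_i$, and the residue field at $\bar g_i$ has degree $\deg\bar g_i$ over $\F_p$.

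\textbf{Primitivity is not an obstacle.} By Nakayama, for $\alpha\in\Z_K$ we have $\Z[\alpha]+p\Z_K=\Z_K$ if and only if $\bar\alpha$ generates $\Z_K/p\Z_K$ as an $\F_p$-algebra. This depends only on $\alpha \bmod p\Z_K$, so $N=1$ suffices in your openness claim. Moreover, any such $\alpha$ is automatically primitive. Put $d=[\Q(\alpha):\Q]$; the monic integral minimal polynomial of $\alpha$ shows that $\F_p[\bar\alpha]$ is spanned by $1,\bar\alpha,\dots,\bar\alpha^{d-1}$. Hence $n=\dim_{\F_p}\Z_K/p\Z_K\le d$, so $d=n$. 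Then $\Z[\alpha]$ has finite index and $p\nmid[\Z_K:\Z[\alpha]]$. Your argument with the line through the coset is valid, just unnecessary.

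\textbf{Completions are optional.} The sufficiency step can equally be done inside $\Z_K/p\Z_K\cong\prod_{\mathfrak P}\Z_K/\mathfrak P^{e_{\mathfrak P}}$, using CRT in $\F_p[x]$ on the pairwise coprime polynomials $\bar\varphi_{\mathfrak P}^{e_{\mathfrak P}}$.
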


\begin{proof}[Proof of Theorem~\ref{nonmonogeneity}]
By hypothesis, we have 
$f(x)\equiv x^{k+1}\bigl(x^{km-k-1}-1\bigr)\pmod{p}$. 
Write $km-k-1=r(p-1)$ for some integer $r$ with $p\nmid r$. 
Then $f(x)\equiv x^{k+1}\bigl(x^{r(p-1)}-1\bigr)\pmod{p}$. 
Since $p\nmid r$, the polynomial $x^{r(p-1)}-1$ is separable over $\mathbb{F}_p$ and hence factors as 
$x^{r(p-1)}-1\equiv \prod_{i=1}^{p-1}(x-i)\,U(x)\pmod{p}$, where $\gcd(x-i,U(x))=1$ for each $i$.

Set $\varphi_i=x-i$ for $i=0,1,\ldots,p-1$. 
For $i\neq 0$, each $\varphi_i$ yields a unique prime ideal $\mathfrak{p}_{i11}$ of $\Z_K$ lying above $p$ with residue degree $1$. 
The ideal corresponding to $U(x)$, denoted by $\mathfrak{b}$, is unramified and decomposes into prime ideals with residue degree $f>1$, satisfying $\mathcal{P}_f$\footnote{ $\mathcal{P}_f$ denotes the number of distinct prime ideals $\mathfrak{P}$ of $\Z_K$ lying above $p$ with residue degree $f$.} $<\mathcal{N}_f$ for all $f>1$.

It remains to analyze the factor $\varphi_0=x$ via the Newton polygon $N^+_{\varphi_0}(f)$. 
With respect to the $p$-adic valuation, the successive vertices of $N^+_{\varphi_0}(f)$ are 
$(km-k-1,0)$, $(km-k,\,v_p(m)+(m-1)v_p(c))$, and $(km,\,mv_p(c))$. 
Thus $N^+_{\varphi_0}(f)$ has two sides with slopes 
$\lambda_1=v_p(m)+(m-1)v_p(c)$ and $\lambda_2=(v_p(c)-v_p(m))/k$. 
The condition $\gcd(v_p(c)-v_p(m),k)=1$ implies that
$p\Z_K=\mathfrak{p}_{001}\mathfrak{p}_{011}^{\,k}\prod_{i=1}^{p-1}\mathfrak{p}_{i11}\mathfrak{b}$,
where all $f_{ijs}=1$. By Lemma~\ref{lem:index}, it follows that $p\mid i(K)$. Moreover, by \cite[Corollary, p.~230 and Theorem~4]{Engstrom1930}, we obtain $v_p(i(K))=1$, completing the proof.
\end{proof}

\section{Results on Galois Group}
To study the Galois group of the polynomial \( f(x) \), we make essential use of the following result due to Hajir, which provides a powerful criterion for determining when the Galois group is large.
\begin{lemma}[{\cite[Theorem~2.2]{Hajir2005Laguerre}}]\label{hajir}
Let $f(x)$ be an irreducible polynomial of degree $m$ and suppose $q$ is prime in the interval $(m/2,\,m-2)$ such that the Newton polygon with respect to some prime $p$ has an edge with slope $a/b$ where $a$ and $b$ are relatively prime integers and $q \mid b.$ Let $\D_f$ be the discriminant of $f(x).$ Then the Galois group of $f(x)$ over $\Q$ is the alternating group $A_m$ if $\D_f$ is a square and is the symmetric group $S_m$ if $\D_f$ is not a square
\end{lemma}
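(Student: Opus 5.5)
The plan is the standard one, combining local ramification with classical permutation group theory. Let $G$ be the Galois group of the splitting field $L$ of $f(x)$ over $\Q$, viewed as a transitive subgroup of $S_m$ acting on the roots (transitive because $f$ is irreducible). There are three steps: show that $q$ divides $|G|$, show that $G$ is primitive, and then apply Jordan's theorem on primitive groups containing a prime cycle.

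\textbf{Step 1: $q$ divides $|G|$.} Take the edge of the $p$-adic Newton polygon of $f(x)$ with slope $a/b$, $\gcd(a,b)=1$ and $q\mid b$. By the basic theory of Newton polygons over $\Q_p$, there is a root $\alpha\in\overline{\Q}_p$ of $f$ with $v_p(\alpha)=\pm a/b$ (the sign depends on the orientation convention). Since $v_p(\alpha)$ has exact denominator $b$, the ramification index of $\Q_p(\alpha)/\Q_p$ is divisible by $b$, hence by $q$. The decomposition group $G_{\mathfrak P}\subseteq G$ of a prime $\mathfrak P$ of $L$ above $p$ is isomorphic to $\mathrm{Gal}(L_{\mathfrak P}/\Q_p)$. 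Its order $[L_{\mathfrak P}:\Q_p]$ is a multiple of $[\Q_p(\alpha):\Q_p]$, so $q\mid |G|$. By Cauchy's theorem $G$ contains an element $\sigma$ of order $q$. Since $q>m/2$, the cycle decomposition of $\sigma$ in $S_m$ cannot contain two disjoint $q$-cycles, so $\sigma$ is a single $q$-cycle.

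\textbf{Step 2: $G$ is primitive.} Suppose $G$ preserved a nontrivial block system with blocks of size $r$, where $1<r<m$ and $r\mid m$. Then $G$ embeds in the wreath product $S_r\wr S_{m/r}$, whose order is $(r!)^{m/r}(m/r)!$. Every prime divisor of this order is at most $\max(r,m/r)\le m/2<q$, contradicting $q\mid |G|$. Hence $G$ is primitive.

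\textbf{Step 3: conclude $G\supseteq A_m$, then use the discriminant.} Jordan's theorem states that a primitive permutation group of degree $m$ containing a cycle of prime length $q\le m-3$ contains $A_m$. Our hypothesis $q<m-2$ gives exactly $q\le m-3$, so $A_m\subseteq G\subseteq S_m$. Finally, $G\subseteq A_m$ if and only if $\sqrt{\D_f}\in\Q$, that is, if and only if $\D_f$ is a square. This gives $G=A_m$ when $\D_f$ is a square and $G=S_m$ otherwise.

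The main obstacle is the local step in Step 1: correctly passing from a Newton polygon slope with denominator divisible by $q$ to $q$ dividing a ramification index, and hence the order of the decomposition group. Once that is in place, Steps 2 and 3 are classical; the only point needing care in Step 2 is the wreath-product order bound.
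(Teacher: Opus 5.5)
Your proof is correct. The paper gives no argument of its own for this lemma; it simply quotes Hajir's Theorem~2.2. Your route is essentially the standard proof of that cited result: the slope denominator forces $q$ to divide a local ramification index and hence $|G|$; $q>m/2$ then yields a $q$-cycle and, via your wreath-product order bound, primitivity; finally Jordan's theorem with $q\le m-3$ gives $A_m\subseteq G$, and the discriminant separates $A_m$ from $S_m$.
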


\begin{lemma}\label{thm:setup-galois} Let $f(x) = (x^{k}+c)^{m} - a x^{n} \in \mathbb{Z}[x]$ be a polynomial over $\mathbb{Q}$, where $k,m,n \in \mathbb{N}$ are such that $km-n$ is a prime number. Let $p$ be a prime such that $p \mid c$, $p \mid a$ and $p^{2} \nmid a$. Then the Newton polygon of $f(x)$ with respect to $p$ has an edge of length $km-n$.
\end{lemma}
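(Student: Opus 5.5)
We compute the $p$-adic Newton polygon of $f(x)$ directly from its coefficients, namely the lower convex hull of the points $(i, v_p(\text{coeff of } x^i))$. Here we use the convention in which the polygon is built from the $x$-adic expansion, i.e.\ the $\varphi$-Newton polygon with $\varphi=x$. Expanding,
\[
f(x)=\sum_{i=0}^{m}\binom{m}{i}c^{m-i}x^{ki}-ax^{n}.
\]
The leading term $x^{km}$ gives the point $(km,0)$. Since $p\mid c$, every term $\binom{m}{i}c^{m-i}x^{ki}$ with $i<m$ has valuation at least $m-i\ge 1$. The term $-ax^{n}$ contributes valuation exactly $1$ at the index $n$, because $p\,\|\,a$. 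If $n$ happens to coincide with some $ki$, the coefficient there becomes $\binom{m}{i}c^{m-i}-a$, with $i<m$ since $n<km$. Its valuation is still exactly $1$ whenever $m-i\ge 2$ or $p\mid\binom{m}{i}$. The borderline case $i=m-1$ with $p\nmid m$ and $v_p(c)=1$ needs a separate remark: there the valuation is $\ge 1$, and we only need $\ge 1$ together with the fact that it is not $0$. So every coefficient of $x^{j}$ with $j<km$ has positive valuation, and the coefficient of $x^{n}$ has valuation exactly $1$, or at least $1$, which is all we use below.

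The key step is to show that the segment joining $(n,1)$ to $(km,0)$ is an edge of the lower convex hull. Its slope is $-1/(km-n)$. We must check that every point $(j,v_j)$ with $n<j<km$ lies on or above the line through these two endpoints. That line has height strictly less than $1$ at such $j$, whereas $v_j\ge 1$ for all $j<km$. Hence all these points lie strictly above the line. Points with $j<n$ only affect the polygon to the left of $x=n$.

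It remains to confirm that $(n,1)$ is a vertex, i.e.\ that the hull does not pass strictly below it. Any point to the left has height $\ge 1$, so any segment from such a point to $(km,0)$ passes at height $\ge$ the line through $(n,1)$ when evaluated at $x=n$. Therefore the rightmost edge of the polygon starts at $(n,1)$, or possibly further left but on the same line. Points to the left at height exactly $1$ could in principle extend the edge collinearly. However, those lie on the line only if their height equals the line's value there, which exceeds $1$ when $j<n$, while their actual height is $1$. So such points would lie below the line, which would make the edge start further left with a different slope.

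This is the main obstacle. To handle it, we restrict to the last edge ending at $(km,0)$ and argue via the point of minimal slope. Among the points $(j,v_j)$ with $j<km$, the last edge goes from the point minimizing $v_j/(km-j)$. Since $v_j\ge1$, that ratio is at least $1/(km-j)$, which is smallest for the largest $j$ with $v_j=1$. If some $j>n$ had $v_j=1$, the same reasoning applies to that $j$, so we need $v_j\ge 2$ for $j>n$. That holds when $m-i\ge2$, i.e.\ for $ki\le k(m-2)$. The term $i=m-1$, i.e.\ $j=k(m-1)$, has valuation $v_p(m)+v_p(c)$, and this case must be excluded or absorbed.

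The edge has horizontal length $km-n$ and slope $-1/(km-n)$, whose denominator $km-n$ is prime. This is exactly what Lemma~\ref{hajir} needs. We will verify the vertex claim by this minimal-slope argument, and handle the index $k(m-1)$ by noting that it would give an edge of slope $-(v_p(m)+v_p(c))/k$. We then use the standing hypotheses, namely $v_p$-conditions ensuring $v_p(m)+v_p(c)\ge 2$, or $k(m-1)<n$, to conclude that the edge from $(n,1)$ to $(km,0)$ is indeed present.
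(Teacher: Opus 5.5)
\textbf{There is a genuine gap, and it sits at the point you call the main obstacle: you resolve it in the wrong direction.}

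You correctly show that every $(j,v_j)$ with $n<j<km$ lies strictly above the line through $(n,1)$ and $(km,0)$. So points to the right of $n$ are harmless; the danger comes from points to the \emph{left}.

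Your minimal-ratio criterion is right: the last edge starts at the leftmost minimizer of $v_j/(km-j)$. But $1/(km-j)$ is smallest for the \emph{smallest} $j$, not the largest. So a single point $(j,1)$ with $j<n$ already beats $(n,1)$, and your conclusion that one needs $v_j\ge 2$ for $j>n$ is backwards.

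Likewise, your sentence ``any segment from such a point to $(km,0)$ passes at height $\ge$ \dots\ at $x=n$'' is false. The segment from $(j,1)$, $j<n$, to $(km,0)$ has height $(km-n)/(km-j)<1$ at $x=n$.

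What is actually needed is:
\begin{itemize}
\item $v_n=1$, so your remark that $v_n\ge 1$ is ``all we use'' is wrong, since the edge must start at height $1$;
\item $v_j(km-n)\ge km-j$ for every $j<n$, with strict inequality if the edge is to have length exactly $km-n$.
\end{itemize}
Finally, the lemma has no further ``standing hypotheses'' to fall back on. Of the two you propose, $k(m-1)<n$ is precisely the harmful case, not a saving one.

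\textbf{The statement cannot be proved as written.} Take $p=2$, $k=3$, $m=2$, $n=4$, $c=a=2$. Then $km-n=2$ is prime and $f(x)=x^6-2x^4+4x^3+4$, with points $(0,2),(3,2),(4,1),(6,0)$. The Newton polygon is the single segment from $(0,2)$ to $(6,0)$ of slope $-1/3$, with $(4,1)$ strictly above it. So there is no edge of length $2$, even though $v_2(m)+v_2(c)=2$ and $k(m-1)<n$, i.e.\ both of your proposed extra conditions hold. For $m=1$, the $3$-Eisenstein polynomial $x^5-3x^2+3$ (with $km-n=3$) is another counterexample: its polygon is one edge of length $5$.

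The paper's own proof has the same hole. It only checks that $(n,1)$ is lower than the points to its left; that is true in the first example above, and the conclusion still fails.

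\textbf{A correct version.} Assume $m\ge 2$ and $n<k(m-1)$, i.e.\ $km-n>k$.
\begin{itemize}
\item For $j\le n$, the coefficient of $x^j$ in $(x^k+c)^m$ is nonzero only at $j=ki$ with $i\le m-2$, and there it has valuation $\ge m-i\ge 2$.
\item Hence $v_n=1$.
\item For $j<n$ we get $v_j(km-n)\ge (m-i)(km-n)>k(m-i)=km-j$.
\end{itemize}
Thus $(n,1)$--$(km,0)$ is an edge of length exactly $km-n$ and slope $-1/(km-n)$, which is what Lemma~\ref{hajir} needs.

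These conditions hold in the setting of Theorem~\ref{galois group}: $k(m-1)$ odd forces $m\ge 2$, and $km-n>km/2$ forces $n<km/2\le k(m-1)$. That is the hypothesis you should add and use.
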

\begin{proof}

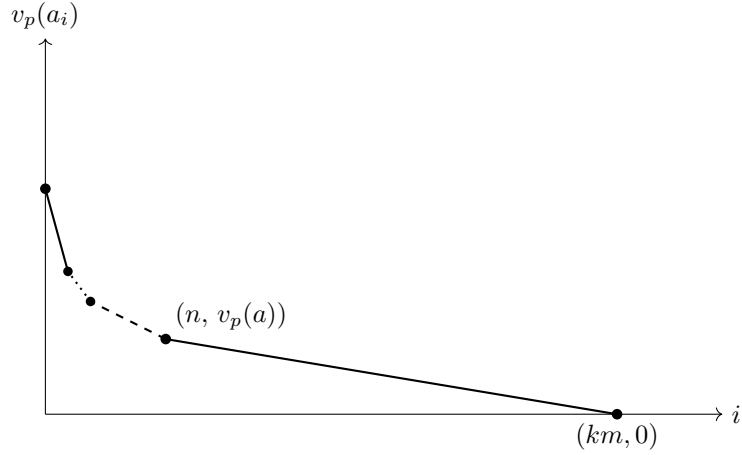
\begin{figure}[htb!]
\centering
\begin{tikzpicture}[scale=1]

% Axes
\draw[->] (0,0) -- (9,0) node[right] {$i$};
\draw[->] (0,0) -- (0,5) node[above] {$v_p(a_i)$};

% Vertices
\coordinate (O)  at (7.6,0); 7.6

% First edge endpoint (small slope)
\coordinate (B1) at (1.6,1);  7.3    % slope = 1

% Dashed intermediate points (strictly increasing slopes)
\coordinate (B3) at (0.6,1.5);   7 
\coordinate (B4) at (0.3,1.9);  6

% Final vertex (short, steep edge)
\coordinate (F)  at (0,3);   0 

% First edge (solid)
\draw[thick] (O) -- (B1);

% Intermediate edges (dashed/dotted)
\draw[dashed, thick] (B1) -- (B3);
\draw[dotted, thick] (B3) -- (B4);

% Final edge (solid, short and steep)
\draw[thick] (B4) -- (F);

% Points
\fill (O) circle (2pt) node[below] {$(km,0)$};
\fill (B1) circle (2pt) node[above right] {$(n,\,v_p(a))$};
\fill (B3) circle (1.8pt);
\fill (B4) circle (1.8pt);
\fill (F) circle (2pt);

\end{tikzpicture}
\caption{Newton polygon of $f(x)$ with respect to $p$}
\label{fig:newton-polygon}
\end{figure}
Write $f(x)=(x^{k}+c)^{m}-a x^{n}=\sum_{i=0}^{km} a_i x^i,$
and consider the Newton polygon of $f(x)$ with respect to the prime $p$. Since $p\mid c$ and $p\mid a$ with $p^{2}\nmid a$, the $p$-adic valuations of the coefficients $a_i$ can be described explicitly. Expanding $(x^{k}+c)^m$, we see that the coefficient of $x^{ik}$ is $\binom{m}{i}c^{\,m-i}$ for $0\le i\le m$. In particular, the constant term $c^{m}$ satisfies $v_p(c^{m})\ge m$, while the leading coefficient is $1$.
Moreover, the coefficient of $x^{n}$ is $-a$, and by assumption $v_p(a)=1$. It follows that among all coefficients $a_i$ with $i<km$, the smallest
$p$-adic valuation occurs at $i=n$, and this valuation is exactly $1$. Consequently, the point $(n,1)$ lies strictly below all other points $(i,\,v_p(a_i))$ with $i<n$. On the other hand, the constant term corresponds to the point $(0,v_p(c^{m}))$, which lies above the line joining $(0,0)$ and $(n,1)$. Since $km-n$ is the prime, there are no other lattice points with a smaller $p$-adic valuation lying below the line segment joining $(n,1)$ and $(km,0)$. Thus, the lower convex hull of the points $(i,v_p(a_i))$ consists of a single
edge that joins $(n,1)$ to $(km,0)$. Equivalently, the Newton polygon of $f(x)$ with respect to $p$ has an single edge of horizontal length $km-n$. This situation is illustrated in Figure~\ref{fig:newton-polygon}, where the intermediate points are strictly above the unique lower edge. This completes the proof of the lemma.
\end{proof}

\begin{proposition}\label{cor:SnAn}
Under the assumptions of Proposition~\ref{thm:setup-galois}, let $d=\deg(f)=km$. If $\frac{d}{2} < km-n < d-2,$
then the Galois group of $f(x)$ over $\mathbb{Q}$ contains $A_{d}$. Moreover, the Galois group is $A_{d}$ if $\D_f$ is a square in $\mathbb{Q}$ and $S_{d}$ otherwise.
\end{proposition}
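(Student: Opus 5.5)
The plan is to combine Lemma~\ref{thm:setup-galois} with Hajir's criterion, Lemma~\ref{hajir}. Set $q=km-n$, which is prime by hypothesis, and $d=km$. By Lemma~\ref{thm:setup-galois}, the $p$-adic Newton polygon of $f(x)$ (for the prime $p$ with $p\mid c$, $p\mid a$, $p^2\nmid a$) contains the edge joining $(n,1)$ to $(km,0)$. I will first record the slope of this edge precisely. Its horizontal length is $km-n=q$ and its vertical drop is $v_p(a)=1$, so the slope is $-1/q$.

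Writing the slope as $a'/b'$ in lowest terms, we get $a'=-1$ and $b'=q$, and the fraction is automatically reduced since the numerator is $\pm1$. In particular $q\mid b'$. I will also make sure the edge is genuinely an edge, i.e.\ no point $(i,v_p(a_i))$ with $n<i<km$ lies on or below the segment. For $n<i<km$ the only possibly nonzero coefficients come from $(x^k+c)^m$ at $i=jk$ with $j<m$, and these equal $\binom{m}{j}c^{m-j}$, which has $v_p\ge 1$. Since the segment lies strictly below height $1$ for $i>n$, those points lie strictly above it. This is the content of Lemma~\ref{thm:setup-galois}, and I will cite it rather than redo it.

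Next I will invoke Lemma~\ref{hajir} with degree $d$. The hypotheses are that $f$ is irreducible (assumed), that $q$ is a prime in $(d/2,d-2)$ (the hypothesis $\frac d2<km-n<d-2$), and that there is a Newton-polygon edge with slope $a'/b'$, $\gcd(a',b')=1$ and $q\mid b'$ (just verified). Hajir's theorem then gives that the Galois group is $A_d$ if $\D_f$ is a square and $S_d$ otherwise; in either case it contains $A_d$. The "contains $A_d$" clause follows because the underlying argument produces a $q$-cycle in a transitive group of degree $d$ with $d/2<q<d-2$, and Jordan's theorem then forces the group to contain $A_d$.

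The only step needing care is the slope and coprimality bookkeeping, namely confirming that the edge's denominator is exactly $q$ rather than a divisor of it. This is immediate here because the vertical change is $1$. A possible sign convention issue (slope $-1/q$ versus $1/q$, depending on orientation) does not affect the condition $q\mid b'$.
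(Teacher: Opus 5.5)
You follow the paper's route (its proof is just Lemma~\ref{thm:setup-galois} combined with Lemma~\ref{hajir}). However, your step of making sure the segment from $(n,1)$ to $(km,0)$ is genuinely an edge is incomplete. Under the proposition's hypotheses the lemma you cite for it also fails when $m=1$, so the paper's one-line proof has the same hole. A segment is an edge of the lower convex hull only if \emph{every} point $(i,v_p(a_i))$ lies on or above the whole line through $(n,1)$ and $(km,0)$. That includes the points with $i<n$, where this line has height $(km-i)/q>1$, so checking only $n<i<km$ is not enough. At $i=jk<n$ the coefficient is $\binom{m}{j}c^{m-j}$, of valuation at least $(m-j)v_p(c)$, while the line has height $k(m-j)/q$. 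So it suffices that $v_p(c)\ge k/q$, and for $m=1$, where the only such point is $(0,v_p(c))$, this is also necessary. If $m\ge 2$ the condition is automatic, because $q>km/2\ge k$, and your argument goes through. This is the only case used in Theorem~\ref{galois group}, since $k(m-1)$ odd forces $m\ge 2$. If $m=1$, however, $1<k/q<2$, so you need $p^2\mid c$; when $v_p(c)=1$ the claimed edge does not exist. For example, $f(x)=x^{10}-2x^3+2$ (so $k=10$, $m=1$, $n=3$, $q=7$, $p=2$, $d/2<q<d-2$) satisfies all the hypotheses and is $2$-Eisenstein. Its $2$-adic points are $(0,1),(3,1),(10,0)$, so the Newton polygon is the single edge of slope $-1/10$, and since $7\nmid 10$ Hajir's criterion gives nothing at this prime. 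As written, your proof therefore covers $m\ge 2$, and $m=1$ with $p^2\mid c$; the case $m=1$, $v_p(c)=1$ needs another argument or an extra hypothesis.

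Two smaller points. First, your vertical drop $v_p(a_n)=1$ needs a check when $k\mid n$, since then the coefficient of $x^n$ is $\binom{m}{j}c^{m-j}-a$ with $n=jk$. It is fine: $m-j=1$ would give $q=k\le d/2$ (and $m=1$ would force $n=0$), so $v_p\bigl(\binom{m}{j}c^{m-j}\bigr)\ge 2$. Second, you are right to treat irreducibility as an added hypothesis rather than a consequence. The polynomial $(x-28)^8-2x^3$ satisfies the assumptions of Lemma~\ref{thm:setup-galois} with $p=2$, $q=5$, $d=8$, and $d/2<q<d-2$, yet it vanishes at $x=32$. So without irreducibility the conclusion that the group contains $A_d$ is false.
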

\begin{proof}
    The proposition is an easy consequence of Lemmas \ref{hajir} and \ref{thm:setup-galois}.
\end{proof}
\begin{proof}[Proof of Theorem~\ref{galois group}]
Let $d=km=\deg(f)$. By assumption, $q=km-n$ is a prime satisfying $\frac{d}{2} < q < d-2.$ Hence, by Proposition~\ref{cor:SnAn}, the Galois group $G=\mathrm{Gal}(K/\mathbb{Q})$ contains the alternating group $A_d$.
Therefore, it remains to determine whether $G=A_d$ or $G=S_d$.
By Proposition~\ref{cor:SnAn}, this depends on whether the discriminant
$\D_f$ is a square in $\mathbb{Q}$. Recall that the discriminant of $f(x)$ is
\[
\D_f
=\pm
a^{\,k(m-1)}c^{m(n-1)}\,
\left[\,(km)^{\,k_1m}c^{k_1m-n_1}
      - a^{k_1}n^{\,n_1}(km-n)^{\,k_1m-n_1}
\right]^{t},
\]
where $t=\gcd(n,k)$, $n=n_1t$, and $k=k_1t$. We now show that $\D_f$ is not a square in $\mathbb{Q}$. By assumption, there exists a prime $\ell$ such that $v_\ell(a) \text{ is odd } \text{ and }
\ell\nmid kmc.$ Since $\ell\nmid kmc$, the prime $\ell$ does not divide $(km)^{k_1m}$ and $
c^{k_1m-n_1}$. Hence $\ell$ does not divide $\calC-\calE$. Therefore, $v_\ell(\D_f) = k(m-1)\,v_\ell(a).$ By hypothesis, $k(m-1)$ is odd and $v_\ell(a)$ is odd. Thus $v_\ell(\D_f)$ is odd. Hence $\D_f$ is not a square in $\mathbb{Q}$. Therefore, $G=S_d.$ This completes the proof.
\end{proof}

\section{An application to Differential Equations}
\begin{proof}[Proof of Theorem~\ref{t2}]
The given differential equation is \begin{align}{\label{diff1}}
    \left(\frac{d^k }{dx^k}+c\right)^my-a\frac{d^ny }{dx^n}=0 
    \end{align}
    where $km>n\geq 1$.  Observe that $\mathcal{F}(z) = (z+c)^m-az^n$ is the irreducible auxiliary equation associated with \eqref{diff1}, having root $\theta$. Suppose that every prime $p$ dividing the discriminant $\D_{\mathcal{F}}$ satisfies one of the conditions (i)–(vii) of Theorem \ref{monogeneity of f(x)}. Then, by the formula $
\D_{\mathcal{F}} = [\Z_K : \mathbb{Z}[\theta]]^2 d_K,
$ it follows that $\Z_K = \mathbb{Z}[\theta]$, where $\Z_K$ denotes the ring of integers of the number field $K = \mathbb{Q}(\theta)$. Also
 $$
 \mathbb{Z}[\theta] = \{c_0 + c_1\theta + c_2 \theta^2 +\cdots + c_{km-1}\theta^{km-1} \;|\; c_k \in \mathbb{Z} \text{ for all } 1\leq k\leq km-1     \}.
 $$
Thus, every root of the equation $\mathcal{F}(z) = 0$ can be expressed in the form
$$
c_0^{(i)} + c_1^{(i)}\theta + c_2^{(i)}\theta^2 + \cdots + c_{km-1}^{(i)}\theta^{km-1},
$$
where each $c_{j-1}^{(i)} \in \mathbb{Z}$ for all $1 \leq i,j \leq km$. Consequently, the general solution of the differential equation \eqref{diff1} is given by
\begin{align*}
    y(x) = \sum_{i=1}^{km} \alpha_i\prod_{j=1}^{km}e^{c_{j-1}^{(i)} \theta^{j-1}x},
\end{align*}
where  $\alpha_i$ is arbitrary real constants for all $1\leq i,j \leq km$.This establishes the claimed form of the general solution and completes the proof.
\end{proof}
\begin{remark}
 The key idea of the proof is to relate the analytic structure of the differential equation to the arithmetic structure of its auxiliary equation. Once the associated polynomial is monogenic, its roots can be described
explicitly in terms of an integral basis, allowing a uniform description of all exponential solutions.
\end{remark}

\section{Examples}
This section is devoted to examples that illustrate our main results.
\begin{example}
Let $q$ be a prime such that $q \mid\mid a$, and let $m \ge 2$ be an integer. Let $f(x) = (x^{q} + 1)^m - ax \in \mathbb{Z}[x]$. Then, by Corollary~\ref{6.1lemma}, the polynomial $f(x)$ is irreducible. Moreover, suppose $p$ is a prime such that $p \mid \D_f$; then $f(x)$ is monogenic if and only if the following conditions hold:
\begin{enumerate}
    \item $a$ is square-free;
    \item If $p \nmid a$ and $p \mid m$, then $p^2 \nmid (a^p - a)$;
    \item If $p \nmid am$, then $p^2 \nmid (\mathcal{C} - \mathcal{E})$.
\end{enumerate}

From Theorem~\ref{disc of f(x)}, we have \[\D_f = \pm a^{q(m-1)} \left[ (qm)^{qm} - a^q (qm-1)^{qm-1} \right] =a^{q(m-1)}N.\] To generate families of monogenic polynomials, we fix certain values of $a$ and $m$ and compute the corresponding factorization of $N$, which are presented in the tables below.

\begin{table}[h]
\centering

\begin{minipage}{0.48\textwidth}
\centering
\begin{tabular}{|c|c|l|}
\hline
$a$ & $m$ & Factorization of $|N|$ \\
\hline
2 & 2 & $2^2 \cdot 37$ \\
2 & 3 & $2^2 \cdot 8539$ \\
2 & 5 & $2^2 \cdot 7 \cdot 17 \cdot 61 \cdot 71 \cdot 4099$ \\
\hline

6 & 2 & $ 2^2 \cdot 179$ \\
6 & 4 & $ 2^2 \cdot 31 \cdot 271 \cdot 383$ \\
6 & 5 & $ 2^2 \cdot 9049 \cdot 109049$ \\
\hline
\end{tabular}
\end{minipage}
\hfill
\begin{minipage}{0.48\textwidth}
\centering
\begin{tabular}{|c|c|l|}
\hline
$a$ & $m$ & Factorization of $|N|$ \\
\hline

10 & 2 & $ 2^2 \cdot 13 \cdot 47$ \\
10 & 3 & $ 2^2 \cdot 41 \cdot 1621$ \\
10 & 4 & $ 2^2 \cdot 3 \cdot 59 \cdot 92623$ \\
\hline

14 & 2 & $ 2^2 \cdot 1259$ \\
14 & 3 & $ 2^2 \cdot 141461$ \\
14 & 4 & $ 2^2 \cdot 3 \cdot 53 \cdot 277 \cdot 821$ \\
14 & 5 & $2^2 \cdot 11 \cdot 41 \cdot 43 \cdot 397 \cdot 2141$ \\
\hline
\end{tabular}
\end{minipage}
\\[2mm]
\caption{Factorization of $N$ for $q=2$ and various values of $a$ and $m$.}
\end{table}
First, we analyze the contribution of the factor $a$ to the discriminant 
$\D_f$ of $f(x)$. From the tables above, for each choice of $a$ we observe that whenever $p \mid a$, we have $p^2 \nmid a$. Hence, by Condition~(1), it follows that 
$p \nmid [\mathbb{Z}_K : \mathbb{Z}[\theta]]$.

Next, we consider the factorization of $N$. For the given values of $a$ and $m$ appearing in the tables, one checks that $N$ has no repeated prime factors other than $2$. Since $2 \mid  a$ and $2^2 \nmid a$, Condition~(1) implies 
$2 \nmid [\mathbb{Z}_K : \mathbb{Z}[\theta]]$.

Now, let $p$ be any odd prime divisor of $N$. From the tables, we see that 
$p^2 \nmid N$. Therefore, by Condition~(3), we conclude that 
$p \nmid [\mathbb{Z}_K : \mathbb{Z}[\theta]]$.

Combining the above arguments, we conclude that for each pair of values $(a,m)$ listed in the tables and $q=2$, the corresponding polynomial is monogenic.

By applying a similar argument and varying the values of $q$, $a$, and $m$ in such a way that $N$ has no repeated prime factors other than $q$, many further examples of monogenic polynomials can be obtained that satisfy the above conditions.
\end{example}

\begin{example}
Let $p$ be a prime number and let $k,n \ge 1$ be integers. 
Choose $a$ of the form $a = 2^p + pt,$ where $t \in \mathbb{Z}$ and $p \nmid t$. Consider the polynomial  \[f(x) = (x^k+1)^p - a x^n \in \mathbb{Z}[x].\] Set $g(x)=f(x+1)$. Then $g(x)=((x+1)^k+1)^p - a(x+1)^n.$
Since $p$ is prime, the Binomial identity implies that $(u+v)^p \equiv u^p+v^p \pmod p.$ Therefore, all intermediate binomial coefficients in the expansion of  $((x+1)^k+1)^p$ are divisible by $p$. It follows that every coefficient of  $g(x)$, except possibly the leading coefficient and the constant term,  is divisible by $p$. The constant term of $g(x)$ is $2^p - a = 2^p - (2^p+pt) = -pt.$ Thus $p \mid (2^p-a)$, but since $p \nmid t$, we have  $p^2 \nmid (2^p-a)$. Therefore, $g(x)$ satisfies the Eisenstein criterion with respect to the prime $p$. Hence $g(x)$ is irreducible over $\mathbb{Q}$, and consequently $f(x)$ is irreducible over $\mathbb{Q}$. By Theorem~\ref{disc of f(x)}, we  have \[\D_f=\pm a^{k(p-1)}\,[(pk)^{pk_1}-a^{k_1}n^{n_1}(pk-n)^{pk_1-n_1}]^t,\] where $t=\gcd(n,\,k)$
\medskip
\begin{table}[H]
\centering
\begin{tabular}{|c|c|c|c|l|}
\hline
$p$ & $a$ & $n$ & $k$ & Factorization of $|N|$ \\
\hline
3 & 11  & 5 & 2 & $109 \cdot 3041$ \\
3 & 14  & 2 & 3 & $23 \cdot 1439 \cdot 261407$ \\
3 & 23  & 5 & 2 & $19 \cdot 84551$ \\
7 & 149 & 3 & 2 & $29 \cdot 691 \cdot 3061 \cdot 4447 \cdot 586237$ \\
\hline
\end{tabular}
\caption{Square-free factorizations of $N$ for selected values of $p$, $a$, $n$, and $k$.}
\label{tab:squarefreeN}
\end{table}
\medskip

First, we analyze the contribution of the factor $a$ to the discriminant  $\D_f$ of $f(x)$. From Table~\ref{tab:squarefreeN}, for each choice of $a$, we observe that whenever $p \mid a$, we have $p^2 \nmid a$. Hence, by Theorem~\ref{monogeneity of f(x)} (i), it follows that $p \nmid [\mathbb{Z}_K : \mathbb{Z}[\theta]]$. Now, let $p$ be any odd prime divisor of $N$. From  Table~\ref{tab:squarefreeN}, we see that  $p^2 \nmid N$. Therefore, by Theorem~\ref{monogeneity of f(x)} (vii), we conclude that $p \nmid [\mathbb{Z}_K : \mathbb{Z}[\theta]]$.
Combining the above arguments, we conclude that for each value $(p,a,n,k)$ listed in  Table~\ref{tab:squarefreeN}, the corresponding polynomial is monogenic.
\end{example}
The computations to obtain the factorization  in the above examples were performed using \texttt{SageMath}.
\begin{example}
Consider the polynomial
\[
f(x)=(x^{5}+3)^{6}-21x^{11}.
\]
Here $k=5$, $m=6$, and $n=11$, so $d=km=30$ and $q=km-n=30-11=19,$ which is prime. Moreover, $\frac{d}{2}=15 < 19 < 28=d-2.$ Hence, by Proposition~\ref{thm:setup-galois}, the Galois group of $f(x)$ contains $A_{30}$. We now verify the remaining hypotheses of Theorem~\ref{galois group}. First, $k(m-1)=5\cdot5=25$ is odd. Next, let $p=3$. Then $3\mid c$, $3\mid a=21$, and $3^{2}\nmid 21$, so condition (2) holds. Finally, take $\ell=7$. We have $v_7(a)=1$  and $7\nmid kmc = 5\cdot6\cdot3 = 90.$ Therefore,
$v_7(\D_f)=k(m-1)v_7(a)$ is odd, so the discriminant is not a square. Consequently, the Galois group of $f(x)$ over $\mathbb{Q}$
is the full symmetric group $S_{30}$.
\end{example}
\begin{example}
Consider the polynomial
$f(x)=(x+25)^6-6x^2$
and let $K=\mathbb{Q}(\theta)$, where $\theta$ is a root of $f(x)$. In this case, $p=5$, $k=1$, and $m=6$, so that
$km-k-1=6-1-1=4.$ Hence $p-1=4$ divides $(km-k-1)$ and $p\nmid (km-k-1)$. Moreover, $mk=6>k+1=2$. Here $a=6$ and $c=25$. We observe that $a\equiv 1 \pmod{5}$ and $5\mid c$.
Furthermore, $v_5(c)=2$ and $v_5(m)=0,$ so that $v_5(c)>(k+1)v_5(m)=2\cdot 0=0.$ Also, $\gcd\! \left (v_5(c)-v_5(m),k\right) = \gcd (2,1) = 1.$ Therefore, all the hypotheses of Theorem~\ref{nonmonogeneity} are satisfied. Consequently, $v_5\!\left(i(K)\right)=1.$
\end{example}

\bibliographystyle{alpha}
\bibliography{references}
\end{document}